\title[Galois cohomology, ramification and Wieferich primes]{Wieferich primes and a mod $p$ Leopoldt conjecture}
\author[G. B\"ockle, D.-A. Guiraud,  S. Kalyanswamy, C. Khare]{Gebhard B\"ockle, David-A. Guiraud,  Sudesh Kalyanswamy, Chandrashekhar Khare}
\address{Gebhard B\"ockle, IWR, Universit\"at Heidelberg, 69120 Heidelberg, Germany}
\email{gebhard.boeckle@iwr.uni-heidelberg.de}
\address{David-A. Guiraud,  IWR, Universit\"at Heidelberg, 69120 Heidelberg, Germany}
\email{david-alexandre.guiraud@iwr.uni-heidelberg.de\\}
\address{Sudesh Kalyanswamy, Yale University, Mathematics Dept., New Haven, CT 06520-8283, USA}
\email{sudesh.kalyanswamy@yale.edu}
\address{Chandrashekhar B.  Khare, Dept. of Mathematics, UCLA, Los Angeles, CA 90095-1555 , USA}
\email{shekhar84112@gmail.com}
\thanks{G.B. and D.-A.G. were supported by the DFG program FG 1920. C.K. was supported by NSF grant DMS - 1601692 and a Humboldt Research Award. }
\date{\today}
\newcommand{\id}{\mathrm{id}}
\DeclareMathOperator{\Hom}{Hom}
\DeclareMathOperator{\End}{End}
\DeclareMathOperator{\Ad}{Ad}
\DeclareMathOperator{\Ind}{Ind}
\DeclareMathOperator{\im}{im}
\DeclareMathOperator{\Frob}{Frob}
\newcommand{\ord}{\textrm{ord}}
\newcommand{\unr}{\textrm{unr}}
\newcommand{\qnew}{{q\textrm{-new}}}
\newcommand{\qnewtw}{{q\textrm{-new-tw}}}
\newcommand{\qunr}{{q\textrm{-unr}}}
\DeclareMathOperator{\GL}{GL}
\DeclareMathOperator{\SL}{SL}
\DeclareMathOperator{\CNL}{CNL}
\DeclareMathOperator{\Aut}{Aut}
\DeclareMathOperator{\Gal}{Gal}
\DeclareMathOperator{\univ}{univ}
\newcommand{\cO}{{\mathcal O}}
\newcommand{\ffrm}{{\mathfrak m}}
\newcommand{\frp}{{\mathfrak p}}
\newcommand{\bbF}{{\mathbb F}}
\newcommand{\bbQ}{{\mathbb Q}}
\newcommand{\T}{\mathbb{T}}
\newcommand{\A}{\mathbb{A}}
\newcommand{\R}{\mathbb{R}}
\newcommand{\C}{\mathbb{C}}
\newcommand{\Z}{\mathbb{Z}}
\newcommand{\Q}[0]
{
\mathbb{Q}
}
\def\eps{\epsilon}
\def\rhobar{ {\overline{\rho}} }
\newcommand{\ra}{\rightarrow}
\newcommand{\F}[0]
{
\mathbb{F}
}
\begin{document}
\begin{abstract} We consider questions in Galois cohomology which arise by considering mod $p$ Galois representations arising from automorphic forms. We consider a Galois cohomological analog for the standard heuristics about  the distribution
of Wieferich primes, i.e. prime $p$ such that $2^{p-1}$ is 1 mod $p^2$. Our analog relates to asking if in a compatible system of Galois representations, for almost all primes $p$, the residual mod $p$ representation arising from it has unobstructed deformation theory.   This analog leads in particular  to formulating a mod $p$ analog  for almost all primes $p$ of the  classical Leopoldt conjecture, which has been considered previously  by G. Gras. Leopoldt conjectured that  for a number field $F$,  and a prime $p$,  the $p$-adic regulator $R_{F,p}$ is non-zero.  The  mod $p$ analog is that  for a fixed number field $F$, for almost all primes $p$,  the $p$-adic regulator $R_{F,p}$ is a unit at~$p$. 

\end{abstract}
\maketitle

\tableofcontents

\newcommand{\twopartdef}[4]
{
	\left\{
		\begin{array}{ll}
			#1 & \mbox{if } #2 \\
			#3 & \mbox{if } #4
		\end{array}
	\right.
}
\newcommand{\I}[1]
{
\mathfrak{#1}
}

\newcommand{\D}[1]
{
#1^{\vee}
}
\newcommand{\U}[1]
{
	{#1}^{\times}
}

\newcommand{\CharFun}[1]
{
\textbf{1}_{#1}
}

\newcommand{\Al}[0]
{
\mathcal{O}
}
\newcommand{\Mod}[0]
{
\text{ mod }
}
\newcommand{\Minus}[0]
{
\backslash
}
\newcommand{\AC}[1]
{
	\overline{#1}
}
\newcommand{\MatTwo}[4]
{
\left(
\begin{array}{ccc}
#1 & #2 \\
#3 & #4 \\
\end{array}
\right)
}
\newcommand{\MatThree}[9]
{
\left(
\begin{array}{ccc}
#1 & #2 & #3 \\
#4 & #5 & #6 \\
#7 & #8 & #9 \\
\end{array}
\right)
}
\newcommand{\invlim}[1]
{
\lim_{\stackrel{\longleftarrow}{#1}}
}
\newcommand{\Sc}[1]
{
	\mathcal{#1}
	}
\newcommand{\IP}[1]
{
	\left\langle #1 \right\rangle
	}
\theoremstyle{definition}
\newtheorem{theorem}{Theorem}[section]
\newtheorem{lemma}[theorem]{Lemma}
\newtheorem{corollary}[theorem]{Corollary}
\newtheorem{proposition}[theorem]{Proposition}
\newtheorem{conjecture}[theorem]{Conjecture}
\newtheorem{definition}[theorem]{Definition}
\newtheorem{example}[theorem]{Example}
\newtheorem{question}[theorem]{Question}
\newtheorem{heuristic}[theorem]{Heuristic}
\theoremstyle{remark}
\newtheorem{remark}[theorem]{Remark}
\newtheorem*{claim}{Claim}
\newtheorem*{proofofclaim}{Proof of Claim}
\setlength{\parindent}{0cm}
\section{Introduction}
Let $F$ be a number field, and let $r_1$ and $r_2$ be the number of real and complex places of $F$, respectively, so that $[F:\Q] = r_1+2r_2$. The Leopoldt conjecture predicts that the number of $\Z_p$-extensions of $F$ is $r_2+1$. Put another way, it asserts that the Galois group of the maximal, abelian pro-$p$ extension of $F$ unramified outside the places above $p$ has $\Z_p$-rank equal to $r_2+1$. By considering the global Euler-Poincar\'{e} characteristic formula, this statement is equivalent to the vanishing $H^2(G_{F,S_p \cup S_{\infty}},\Q_p) = 0$, where $S_p$ and $S_{\infty}$ are the places of $F$ above $p$ and $\infty$, respectively, and $G_{F,S_p \cup S_{\infty}}$ is the Galois group of the maximal extension of $F$ unramified outside $p$ and $\infty$. \medskip

One can consider a mod $p$ analogue of this statement. The type of question we will be asking throughout this note is the following:
\begin{question}\label{main} Is $H^2(G_{F,S_p \cup S_{\infty}},\Z/p\Z) = 0$ for almost all primes $p$?
\end{question}
In this note, ``almost all'' refers to either ``all but finitely many'' or ``all outside a set of density zero.'' We would be satisfied with either answer, and in our heuristics, we clearly state the intended meaning. The question can be viewed as  asking if for almost  all $p$ the deformation of the trivial mod $p$ representation of $G_F$ is unobstructed.   \medskip

Note that the vanishing in Question \ref{main} implies, by the global Euler-Poincare characteristic formula, that $$\dim H^1(G_{F,S_p \cup S_{\infty}},\Z/p\Z) = \dim \Hom(G_{F,S_p \cup S_{\infty}},\Z/p\Z) =  r_2+1,$$ which implies  Leopoldt's conjecture in the stronger form that  the Galois group of the maximal, abelian  pro-$p$ extension of $F$ unramified outside the places above $p$  is $\Z_p^{r_2+1}$. Thus we think of the expected affirmative answer to our question as a  mod $p$ Leopoldt conjecture  for almost all primes $p$. \medskip

Question \ref{main} has an affirmative answer in the case  $F=\Q$ (since $H^2(G_{\Q,S_p \cup S_{\infty}},\Z/p\Z) = 0$ for all $p>2$, and for $p=2$ we have $\dim H^2(G_{\Q,S_2 \cup S_{\infty}},\Z/2\Z) = 1$), and when  $F$ is an imaginary quadratic field. However,  in any case where we have a unit of the number field of infinite order,   for instance $F$ real quadratic,  we do not know the answer to our  question, despite the fact that $H^2(G_{F,S_p \cup S_{\infty}},\Q_p) = 0$ is easy for $F$ real quadratic fields. In this case (see \S 4)  our question is a direct  analog of the classical question  if almost all primes $p$ are non-Wieferich, i.e., $2^{p-1}$ is not 1 mod $p^2$  for almost all $p$. \medskip

Question~\ref{main} and the heuristics for it have been studied before by Gras and others, see \cite{Gras} and the references therein.\footnote{We thank G. Gras  and C. Maire for pointing this out to us.} A field that satisfies $H^2(G_{F,S_p},\Z/p\Z) = 0$ is called $p$-rational; see \cite[p.~162]{Movahhedi-Nguyen} (note that $H^2(G_{F,S_p},\Z/p\Z) = H^2(G_{F,S_p \cup S_{\infty}},\Z/p\Z)$ for $p>2$). The expected abundance of $p$-rational fields has been formulated in various places. For recent applications and conjectures see \cite{Greenberg} or \cite{Hajir-Maire}.
 \medskip

More generally in this paper, we will be asking questions related to the vanishing of degree two Galois cohomology groups with coefficients arising adjoint representations thar arise   from compatible systems of Galois representations.   In other words, we ask if for most primes $p$ the deformation theory of mod $p$ representations arising from a fixed compatible system is unobstructed. The questions seem extremely hard to answer,  but  we record    some computational evidence in support of our guesses.  \medskip

We give a brief description of the contents. In \S 2 we state the dimension conjecture for deformation rings (due to Mazur) which motivated our work. In \S 3 we recall results of Weston which prove that for almost all primes $p$ the deformations of mod $p$ representations arising from  a fixed newform of weight $k \geq 2$  is unobstructed which motivated our mod $p$ Leopoldt conjecture. In \S 4 we discuss the mod $p$ Leopoldt conjecture and some (meagre) computational evidence for it.  (The reference \cite{Gras} gives many references to the rich literature about these questions.)  In \S 5 we consider the characteristic 0 situation briefly. Here the classical Leopoldt conjecture is open, and in a more automorphic setting, the smoothness of deformations of $p$-adic representations arising from  (for example) Bianchi forms is not known. In \S 6 we consider the conjectural  analog of Weston's result for  mod $p$ compatible systems arising from classical weight one forms. In \S 7 we consider the finer question if  the ordinary weight one  deformation ring for almost all primes $p$ of the mod $p$ representations $\{\rhobar_{f,p}\}$ arising from  a fixed classical weight one newform $f$ is smooth (which implies that the ordinary deformation ring is smooth, and that $H^2(S \cup \{p,\infty\},\Ad(\rhobar_{f,p}))$=0 with $S$ consisting of primes that divide the level of $f$). This can be expressed qualitatively by saying that mod $p$ representations arising from $p$-adic weight one ordinary forms of fixed tamel level $N$ are typically ramified at $p$. In \S 8 we consider the analog of this question for ramification away from $p$. For example we ask:  Fix primes $p \neq \ell$, then is it true that for all but finitely many primes  $q$ there is a (mod $p$) Hecke eigenform $f$ in $S_2(\Gamma_0(q\ell),\F_p)$ such that  the corresponding mod $p$ representation $\bar\rho_f: G_\Q \ra \GL_2(k)$, with $k$ a finite field of characteristic $p$,  is  irreducible and ramified at $q$  and  $\ell$? The question is challenging when there is a form in $S_2(\Gamma_0(\ell),\F_p)$ that gives rise to an irreducible mod $p$ representation for which $q$ is a level raising prime. We can't rule out the possibility that all the modular forms in $S_2(\Gamma_0(q\ell),\F_p)$  give rise to representations that already arise from $S_2(\Gamma_0(\ell),\F_p)$. \medskip

The  heuristic we employ in this paper  is similar to that used in \cite{Gras}  (see also its references),  \cite[Prop.~7.2]{JKPSZ} and to some Cohen-Lenstra style heuristics. We chose the reference to Wieferich primes in the title of the article, since their expected occurrence is the most basic instance where such a heuristic seems to have been applied first. \medskip

  We would like to thank  Frank Calegari and  Jo\"el Bella\"iche  for helpful correspondence about the questions explored in this note. The last author  wondered about a ``mod $p$ Leopoldt'' conjecture and the natural heuristic for it  in 2014, and subsequently corresponded with Frank Calegari about it:  \S  \ref{sec:Artin}  and \S  \ref{ExampleCrefined}  are due to Calegari and date from that correspondence.     We would also like to thank John Coates, Ravi Ramakrishna, Romyar Sharifi and Jack Thorne for useful correspondence and conversations. \medskip

\section{Review of Deformation Theory and Dimension Conjectures}
In this section, we review the basics of Galois deformations and recall a dimension conjecture of Mazur. Let $F$ be a number field. We will often restrict ourselves to the case where $F$ is either totally real or CM (the questions  we ask seem  hard  even over quadratic fields, and sometimes for $\Q$ itself), but for now assume $F$ is arbitrary. Let $n \geq 1$ be an integer, and let $p$ be a prime such that $(p,n) = 1$. Let $S \supset S_p \cup S_{\infty}$ be a finite set of places of $F$. We will let $G_{F,S}$ denote the Galois group of the maximal algebraic extension of $F$ unramified outside $S$. Suppose that $\bar{\rho}: G_{F,S} \to \GL_n(k)$ is an irreducible representation, where $k$ is a finite field of characteristic $p$. If $F$ is totally real or CM, $\bar\rho$ will typically be odd.\medskip
 
Let $W(k)$ denote the Witt vectors of $k$, and let $\CNL_{W(k)}$ be the category of complete Noetherian local $W(k)$-algebras with residue field $k$. We will denote by $\Sc{F}: \CNL_{W(k)} \to \text{Sets}$ the functor which takes an object $A \in \CNL_{W(k)}$ to the set of deformations $\rho: G_{F,S} \to \GL_n(A)$ of $\bar{\rho}$. Mazur \cite{Maz} proved that $\Sc{F}$ is a representable functor, and we denote the representing object by $R^{\univ}$, known as the universal deformation ring. We have the following presentation result due to Mazur.
\begin{proposition} Let $h^i = \dim_k H^i(G_{F,S},\Ad(\bar{\rho}))$. Then there is a presentation 
$$R^{\univ} \cong W(k) \llbracket X_1,\ldots,X_{h^1} \rrbracket / (f_1,\ldots,f_{h^2})$$ inducing an isomorphism of mod $p$ Zariski tangent spaces. 
\end{proposition}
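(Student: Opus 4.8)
The plan is to construct the presentation of $R^{\univ}$ by the standard obstruction-theory argument, using that $H^1$ controls first-order deformations and $H^2$ receives obstructions to lifting along small extensions. Throughout, write $R = R^{\univ}$, let $\mathfrak{m}_R$ be its maximal ideal, and set $d = h^1 = \dim_k H^1(G_{F,S}, \Ad(\bar\rho))$.

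\emph{Step 1: the tangent space and the surjection from the power series ring.} First I would recall the canonical identification of the mod $p$ Zariski tangent space $\Hom_k(\mathfrak{m}_R/(\mathfrak{m}_R^2 + pR), k)$ with the set of deformations to the dual numbers $k[\epsilon]/(\epsilon^2)$, which in turn is canonically $H^1(G_{F,S}, \Ad(\bar\rho))$ — this is part of Mazur's original computation and uses that $\Ad(\bar\rho)$ is the tangent space of $\GL_n$ with the conjugation action. Hence $\dim_k \mathfrak{m}_R/(\mathfrak{m}_R^2 + pR) = h^1$. Choosing a $k$-basis of this tangent space and lifting to elements $x_1, \dots, x_{h^1} \in \mathfrak{m}_R$, the map $X_i \mapsto x_i$ defines a local $W(k)$-algebra homomorphism $\pi \colon W(k)\llbracket X_1, \dots, X_{h^1}\rrbracket \twoheadrightarrow R$; it is surjective by the complete local version of Nakayama's lemma, and by construction it induces an isomorphism on mod $p$ tangent spaces.

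\emph{Step 2: bounding the number of relations by $h^2$.} Let $P = W(k)\llbracket X_1, \dots, X_{h^1}\rrbracket$, $\mathfrak{a} = \ker\pi$, and $\mathfrak{m}_P$ its maximal ideal. The key point is that $\mathfrak{a} \subseteq \mathfrak{m}_P^2 + pP$ is \emph{minimal} in the sense that the relations may be taken to be a minimal generating set, and that $\dim_k \mathfrak{a}/\mathfrak{m}_P\mathfrak{a} \le h^2$. To see this I would exhibit an injection of the dual $\Hom_k(\mathfrak{a}/\mathfrak{m}_P\mathfrak{a}, k)$ into $H^2(G_{F,S}, \Ad(\bar\rho))$, as follows. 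Given a $k$-linear functional on $\mathfrak{a}/\mathfrak{m}_P\mathfrak{a}$, form the corresponding small extension $P/(\mathfrak{m}_P\mathfrak{a} + \ker\phi) \to R/\mathfrak{m}_R^{\text{(appropriate)}}$ with kernel a one-dimensional $k$-vector space; pulling back the universal-type deformation and measuring the failure to lift produces a class in $H^2$, and the assignment is injective because a relation that maps to zero would mean the deformation lifts, contradicting minimality of $\mathfrak{a}$. Equivalently, and more cleanly, one runs the inductive lifting argument: a deformation over a quotient $A$ lifts to a small extension $\tilde A \to A$ precisely when the associated obstruction class in $H^2(G_{F,S}, \Ad(\bar\rho)) \otimes_k I$ vanishes (here $I = \ker(\tilde A \to A)$), so the obstructions to building the universal deformation are governed by an $h^2$-dimensional space, whence $\mathfrak{a}$ needs at most $h^2$ generators. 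Picking a minimal set $f_1, \dots, f_m$ with $m \le h^2$ and padding with zeros if desired gives relations $f_1, \dots, f_{h^2}$.

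\emph{Step 3: the tangent space statement.} Because $\mathfrak{a} \subseteq \mathfrak{m}_P^2 + pP$, the surjection $\pi$ induces an isomorphism $\mathfrak{m}_P/(\mathfrak{m}_P^2 + pP) \xrightarrow{\sim} \mathfrak{m}_R/(\mathfrak{m}_R^2 + pR)$, which is exactly the claimed isomorphism of mod $p$ Zariski tangent spaces; this is immediate from the construction in Step 1 once one knows $\mathfrak{a} \subseteq \mathfrak{m}_P^2 + pP$, which itself follows because the $x_i$ were chosen to map to a basis of the $h^1$-dimensional target tangent space.

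\emph{The main obstacle} is Step 2: making precise the duality between a minimal generating set of the relation ideal $\mathfrak{a}$ and a subspace of $H^2(G_{F,S}, \Ad(\bar\rho))$, i.e.\ constructing the obstruction map and proving its injectivity on $\Hom_k(\mathfrak{a}/\mathfrak{m}_P\mathfrak{a}, k)$. This requires care with the deformation-theoretic bookkeeping: one must cofiltber $R$ by a system of small extensions, check that the obstruction cocycle (built from a set-theoretic lift of a cocycle representing the deformation, via the non-abelian cocycle relation for $\GL_n$-valued maps) is well-defined in cohomology and functorial, and verify that vanishing of the obstruction is equivalent to existence of a lift. All of this is classical (due to Mazur), so in the write-up I would either cite \cite{Maz} for these facts or reproduce the short cocycle computation; the remaining arguments (Nakayama, dimension count on tangent spaces) are formal.
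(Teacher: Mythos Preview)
Your sketch is correct and is the standard obstruction-theoretic argument. Note, however, that the paper does not supply its own proof of this proposition: it is stated as a known result due to Mazur, with an implicit reference to \cite{Maz}, so there is nothing in the paper to compare your argument against beyond that citation. Your Steps 1--3 are precisely the outline of Mazur's original proof (tangent space $\cong H^1$ via dual-number deformations; minimal relation ideal injects into $H^2$ via obstruction classes; pad with zero relations if the minimal number is strictly less than $h^2$), so you are reproducing the argument the paper is citing rather than offering an alternative.
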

Recall, also, the Euler-Poincar\'{e} formula:
\begin{proposition} If $h^i$ is as in the previous proposition, then 
$$h^1-h^2 = (1+r_2) + (n^2-1)[F:\Q] - \sum_{v|\infty} \dim_k H^0(G_v,\Ad^0(\bar{\rho})).$$
\end{proposition}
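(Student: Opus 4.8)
The plan is to deduce this from Tate's global Euler--Poincar\'e characteristic formula; that is the only nontrivial input, and everything else is bookkeeping with $r_1$, $r_2$ and the decomposition $\Ad(\bar\rho)=\Ad^0(\bar\rho)\oplus k$. In additive form, for a finite-dimensional $k[G_{F,S}]$-module $M$ Tate's formula reads
$$h^0(M)-h^1(M)+h^2(M) \;=\; \sum_{v\mid\infty}\dim_k H^0(G_v,M) \;-\; [F:\Q]\cdot\dim_k M,$$
where $h^i(M):=\dim_k H^i(G_{F,S},M)$ and $S\supseteq S_p\cup S_\infty$, so that all hypotheses are met and the cohomology is finite. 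One may quote this from a standard reference, or recover it from the multiplicative statement $(\#H^0\cdot\#H^2)/\#H^1 = \prod_{v\mid\infty}\#H^0(G_v,M)/\#M^{[F_v:\R]}$ by applying $\log_{\#k}$ and using $\sum_{v\mid\infty}[F_v:\R]=[F:\Q]$ together with the triviality of $G_v$ at complex places.

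I then specialize to $M=\Ad(\bar\rho)$, so that $\dim_k M = n^2$ and $h^i(M)=h^i$ in the notation of the statement. One has $H^0(G_{F,S},\Ad(\bar\rho))=\End_{G_{F,S}}(\bar\rho)$, which equals $k$ by Schur's lemma under the standing (absolute) irreducibility hypothesis on $\bar\rho$ --- the same hypothesis already underlying the representability cited above; hence $h^0=1$, and this is the source of the ``$1$'' in ``$1+r_2$''. To pass to $\Ad^0$: since $p\nmid n$, the scalar inclusion $k\hookrightarrow\Ad(\bar\rho)$ and the trace map are $G_{F,S}$-equivariant with composite multiplication by a unit, so $\Ad(\bar\rho)\cong\Ad^0(\bar\rho)\oplus k$ with trivial action on the $k$-summand. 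Consequently $\dim_k H^0(G_v,\Ad(\bar\rho)) = \dim_k H^0(G_v,\Ad^0(\bar\rho))+1$ for every archimedean $v$, whether real or complex, and summing over the $r_1+r_2$ such places,
$$\sum_{v\mid\infty}\dim_k H^0(G_v,\Ad(\bar\rho)) = \sum_{v\mid\infty}\dim_k H^0(G_v,\Ad^0(\bar\rho)) + (r_1+r_2).$$

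Substituting into the additive identity gives
$$1-h^1+h^2 = \sum_{v\mid\infty}\dim_k H^0(G_v,\Ad^0(\bar\rho)) + (r_1+r_2) - n^2[F:\Q],$$
hence
$$h^1-h^2 = 1-(r_1+r_2)+n^2[F:\Q] - \sum_{v\mid\infty}\dim_k H^0(G_v,\Ad^0(\bar\rho)).$$
Finally, using $[F:\Q]=r_1+2r_2$ one computes $n^2[F:\Q]-(r_1+r_2) = (n^2-1)[F:\Q]+\big([F:\Q]-r_1-r_2\big) = (n^2-1)[F:\Q]+r_2$, which combined with $h^0=1$ yields exactly
$$h^1-h^2 = (1+r_2)+(n^2-1)[F:\Q] - \sum_{v\mid\infty}\dim_k H^0(G_v,\Ad^0(\bar\rho)).$$

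I do not anticipate a real obstacle: the content lies entirely in Tate's formula, and the only points deserving care are verifying its hypotheses ($M$ finite, $S\supseteq S_p\cup S_\infty$, all $H^i$ finite), the Schur's lemma input $h^0(\Ad(\bar\rho))=1$ (whence the irreducibility hypothesis is used), and the use of $p\nmid n$ to detach the scalar summand so that the archimedean contributions appear through $\Ad^0$ as in the statement. As a sanity check, running the same argument with $M=\Fp$ (trivial coefficients) returns $h^1-h^2=1+r_2$, the form of Leopoldt's conjecture recalled in the introduction.
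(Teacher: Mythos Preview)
Your argument is correct and is the standard derivation. The paper does not actually supply a proof of this proposition; it simply states it as a recollection of the global Euler--Poincar\'e characteristic formula, so there is nothing to compare against beyond noting that you have filled in exactly the expected computation from Tate's formula applied to $\Ad(\bar\rho)$, together with the decomposition $\Ad(\bar\rho)\cong\Ad^0(\bar\rho)\oplus k$ (using $p\nmid n$) and Schur's lemma (using absolute irreducibility) to identify $h^0=1$.
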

These two propositions lead to the following conjecture:
\begin{conjecture}[{\cite[Sec.~4]{Gouvea}}] Suppose that $S$ is finite and contains $S_p\cup S_\infty$. Then $R$ is flat over $W(k)$ and the relative dimension of $R$ over $W(k)$ is given by
$$\delta(\bar{\rho}) = (1+r_2)+(n^2-1)[F:\Q]-\sum_{v| \infty} \dim_k H^0(G_v,\Ad^0(\bar{\rho})).$$
\end{conjecture}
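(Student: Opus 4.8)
I sketch the structure of the question and the natural line of attack, though --- as the introduction already signals --- a proof in general is not available. What is automatic is a lower bound: Mazur's presentation $R^{\univ}\cong W(k)\llbracket X_1,\dots,X_{h^1}\rrbracket/(f_1,\dots,f_{h^2})$ exhibits $R^{\univ}$ as a quotient of a regular local ring of dimension $1+h^1$ by $h^2$ elements, so $\dim R^{\univ}\ge 1+h^1-h^2$, and the Euler--Poincar\'{e} formula identifies $h^1-h^2$ with $\delta(\bar\rho)$. Hence the entire content of the conjecture is an upper bound together with $W(k)$-flatness, and the plan is to reduce both to one assertion: that the reductions $\bar f_1,\dots,\bar f_{h^2}$ form a regular sequence in $k\llbracket X_1,\dots,X_{h^1}\rrbracket$. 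Granting this, $p,f_1,\dots,f_{h^2}$ is a regular sequence in $W(k)\llbracket X_1,\dots,X_{h^1}\rrbracket$; since regular sequences in a Noetherian local ring may be permuted, moving $p$ to the end shows that the $f_i$ themselves form a regular sequence and that $p$ is a nonzerodivisor on $R^{\univ}$, so $R^{\univ}$ is flat over $W(k)$ and $\dim R^{\univ}=1+\dim(R^{\univ}/p)=1+(h^1-h^2)=1+\delta(\bar\rho)$. Conversely a height count shows the conjecture is \emph{equivalent} to $\dim(R^{\univ}/p)=\delta(\bar\rho)$, i.e.\ to the mod $p$ deformation ring having the expected dimension. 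Everything thus comes down to controlling $H^2(G_{F,S},\Ad(\bar\rho))$ from the outside: showing the $h^2$ obstruction classes impose $h^2$ genuinely independent relations.

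One case is immediate: if $h^2=0$ there are no relations and $R^{\univ}\cong W(k)\llbracket X_1,\dots,X_{h^1}\rrbracket$ is formally smooth over $W(k)$ of relative dimension $h^1=\delta(\bar\rho)$. So the first step is to try to prove $\bar\rho$ unobstructed; this is exactly the theme of the rest of the paper, and by Weston's theorem (\S 3) it succeeds, for a fixed newform of weight $\ge 2$, for all but finitely many $p$. When $h^2\ne 0$ one must instead produce the global relations explicitly enough to see regularity, and the only systematic mechanism I know is an $R=\T$ theorem. The plan there would be: when $\bar\rho$ is automorphic, after a solvable base change and after imposing local deformation conditions (fixed determinant, a crystalline or ordinary condition at $p$, suitable conditions at the auxiliary ramified primes), run a Taylor--Wiles--Kisin patching argument to identify the resulting deformation ring with a Hecke module that is finite free over a power series ring over $W(k)$ of the predicted dimension; flatness and dimension are then forced, and one descends by analysing the local deformation rings at $v\in S\setminus S_p$ and accounting for the determinant, which alters $\delta(\bar\rho)$ in a controlled way.

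The hard part, and the reason this is a conjecture rather than a theorem, is that there is no unconditional handle on the global $H^2$. Poitou--Tate gives only the tautological lower bound together with an exact sequence expressing $H^2(G_{F,S},\Ad(\bar\rho))$ through local $H^2$'s and $\Sha^2$, and $\Sha^2(\Ad(\bar\rho))$ is Tate-dual to a $\Sha^1$ whose vanishing is itself of Leopoldt type; the patching route, when it applies at all, controls only a restricted, locally constrained ring over a totally real or CM base and needs strong hypotheses on $\bar\rho$, while for a general number field there is no automorphic input. Indeed, taking $\bar\rho$ to be the trivial character and $S=S_p\cup S_\infty$, the conjecture specialises to $H^2(G_{F,S_p\cup S_\infty},\Q_p)=0$, the classical Leopoldt conjecture for $(F,p)$ recalled in the introduction, so a proof in full generality is out of reach. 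The realistic goal --- the one this paper adopts --- is therefore to verify the conjecture in the unobstructed cases where $h^2=0$ can be established, to record computational evidence, and to formulate heuristics predicting that within a fixed compatible system it holds for almost all $p$.
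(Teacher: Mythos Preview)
The statement is a \emph{conjecture}, and the paper offers no proof of it; immediately after stating it the paper only remarks that for $n=1$ it reduces to the classical Leopoldt conjecture, and that it is obvious when $h^2=0$. Your proposal correctly recognises this: you explain that the lower bound $\dim R^{\univ}\ge 1+h^1-h^2=1+\delta(\bar\rho)$ is automatic from Mazur's presentation together with the Euler--Poincar\'e formula, that the entire content is the matching upper bound and $W(k)$-flatness, that the case $h^2=0$ is immediate, and that the specialisation to $n=1$ is Leopoldt --- all of which is exactly the paper's own commentary. Your additional observation that the conjecture is equivalent to the reductions $\bar f_1,\dots,\bar f_{h^2}$ forming a regular sequence (equivalently, to $\dim(R^{\univ}/p)=\delta(\bar\rho)$) is correct and goes slightly beyond what the paper spells out, as does your sketch of how an $R=\T$ theorem would settle constrained cases; but you are right that none of this yields an unconditional argument, and you say so clearly. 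In short: there is nothing to compare against, and your write-up is an accurate account of why the statement remains conjectural and of the one trivial case the paper itself singles out.
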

The conjecture seems to go back to Mazur, who observes in \cite[1.10, Lem.~4]{Maz} that if $n=1$, then it is the same as the classical Leopoldt conjecture as stated in the introduction. Thus, Mazur's conjecture is a higher dimensional analogue of this classical question. Let us note that our reference is the {\em Dimension Conjecture} stated by Gouv\^{e}a in \cite[Sec.~4]{Gouvea}, who suggests Mazur as the source. A similar conjecture was stated by Flach. The statements made by Gouv\^{e}a and by Flach have in fact fewer hypotheses than the conjecture we state above, and in both cases counterexamples are known; see \cite{Sprang} and \cite{BlCh}.\medskip

One case where Mazur's conjecture is obvious is the case when $h^2 = 0$, i.e., when we have
$H^2(G_{F,S},\Ad(\bar{\rho})) = 0$. In this case, the lifting problem for $\bar{\rho}$ is unobstructed. We will be examining the case when this second cohomology group vanishes for generic characteristic when examining residual representations which arise from some compatible systems. We make the following definition.
\begin{definition} Let $(\bar{\rho}_{\lambda})_{\lambda}$ denote an $L$-rational compatible system of mod $p$ Galois representations, where $\lambda$ runs over the finite places of $L$ and the $\bar{\rho}_{\lambda}: G_{F,S \cup S_{l(\lambda)}} \to GL_n(k_{\lambda})$ are Galois representations (here $l(\lambda)$ is the residue characteristic of~$\lambda$), in the sense of \cite{Khare}. Then the system is said to be \emph{generically unobstructed} if $H^2(G_{F,S \cup S_{l(\lambda)}},\Ad(\bar{\rho}_{\lambda})) = 0$ for almost all $\lambda$. 
\end{definition}

\section{Generic Unobstructedness in Modular Forms Setting}
In this section, we recall a result of Weston \cite{Wes} which motivates the idea of this note. Let $f$ denote a newform of level $N$, weight $k \geq 2$, and character $\epsilon$, and let $K = \Q(a_n(f))$ denote the number field gotten by adjoining the Fourier coefficients of the $q$-expansion of $f$. Let $S$ be a finite set of places of $\Q$ which includes all primes dividing $N$ and the infinite place. Consider the compatible system of representations $(\rho_{f,\lambda})_{\lambda}$, where $\lambda$ runs over the finite places of $K$. We denote by $\bar{\rho}_{f,\lambda}:G_{\Q,S \cup \{l\}} \to \GL_2(k_{\lambda})$ the reduction of $\rho_{f,\lambda}$, which for almost all $\lambda$ is an irreducible representation. Here we simply write $l$ for $l(\lambda)$. \medskip

Let $R_{f,S,\lambda}$ be the universal deformation ring parametrizing deformations of $\bar{\rho}_{f,\lambda}$ which are unramified outside $S \cup \{l \}$. Then Weston proves the following:
\begin{theorem} \label{thm:weston}
\begin{itemize}
\item[(1)] If $k \geq 3$, then for almost all places $\lambda$, the deformation ring is unobstructed, i.e. $H^2(G_{\Q,S \cup \{l \}},\Ad(\bar{\rho}_{f,\lambda})) = 0$ and $R_{f,S,\lambda} \cong W(k_{\lambda}) \llbracket X_1,X_2,X_3 \rrbracket$. 
\item[(2)] If $k=2$, then the above is true for all $\lambda$ outside a set of places of density zero. More precisely, if $T$ is the set of all $\lambda$ such that $a_l(f)^2 \equiv \epsilon(l) \Mod \lambda$ and $\bar\rho_\lambda$ restricted to $G_l$ is semisimple, then unobstructedness can only fail for finitely many $\lambda$ outside~$T$.
\end{itemize}
\end{theorem}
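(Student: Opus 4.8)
The plan is to bound $H^2(G_{\Q,S\cup\{l\}},\Ad(\bar\rho_{f,\lambda}))$ by a sum of local contributions via the Poitou--Tate machinery and then show each local term vanishes for almost all $\lambda$. First I would recall the global duality exact sequence: the Selmer-type group $H^1_{\mathrm{loc}}$ and the dual Selmer group $H^1(G_{\Q,S\cup\{l\}},\Ad^*(\bar\rho_{f,\lambda}))^\vee$ fit into the nine-term Poitou--Tate sequence, which gives
\[
H^2(G_{\Q,S\cup\{l\}},\Ad(\bar\rho_{f,\lambda})) \hookrightarrow \bigoplus_{v\in S\cup\{l\}} H^2(G_v,\Ad(\bar\rho_{f,\lambda})) \;\oplus\; H^0(G_{\Q,S\cup\{l\}},\Ad^*(\bar\rho_{f,\lambda}))^\vee,
\]
up to the image of the global $H^2$ into local terms being controlled by the dual Selmer group. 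By local Tate duality, $H^2(G_v,\Ad(\bar\rho_{f,\lambda}))\cong H^0(G_v,\Ad^*(\bar\rho_{f,\lambda}))^\vee$ where $\Ad^*=\Ad(1)$ is the Tate twist, so everything reduces to showing that certain $H^0$'s — both the global one and the local ones at $v\in S$ and at $v=l$ — vanish for almost all $\lambda$.

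Next I would dispose of the global term $H^0(G_{\Q,S\cup\{l\}},\Ad^*(\bar\rho_{f,\lambda}))$: a nonzero invariant would be a $G_\Q$-equivariant map $\bar\rho\to\bar\rho\otimes\bar\chi_{\mathrm{cyc}}$, which since $\bar\rho_{f,\lambda}$ is irreducible for almost all $\lambda$ forces (by Schur) $\bar\chi_{\mathrm{cyc}}=1$, impossible once $l>3$. For the local terms at the bad primes $v\in S$, $v\nmid l$: the key input is that the compatible system has, at each such $v$, a well-defined Frobenius-semisimplification whose characteristic polynomial has coefficients in $K$ independent of $\lambda$; using the classification of the possible local types (principal series, Steinberg, supercuspidal, or having a fixed inertial type) one checks that $H^0(G_v,\Ad^*(\bar\rho_{f,\lambda}))\neq 0$ forces a congruence condition on the $v$-Euler factor — e.g. a ratio of the two Frobenius eigenvalues being congruent to $l$ — which, being a fixed nonzero element of $\bar K$ that is not a root of unity (here one uses weight $\geq 2$ and the Ramanujan bound, or purity, to exclude the eigenvalue ratio being a root of unity), can hold for only finitely many $\lambda$. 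The Steinberg case needs slightly separate care since there the ratio of eigenvalues is exactly $l$ by construction, but then the relevant $H^0$ still vanishes because of the nontrivial unipotent part (the monodromy operator $N\neq 0$ cuts down the invariants).

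Finally, and this is where the dichotomy between $k\geq 3$ and $k=2$ enters, I would treat the term at $v=l$, i.e. $H^0(G_l,\Ad^*(\bar\rho_{f,\lambda}))$: by local duality and Fontaine--Laffaille / crystalline theory, $\bar\rho_{f,\lambda}|_{G_l}$ is (for almost all $\lambda$, once $l$ is large relative to the weight) crystalline with Hodge--Tate weights $\{0,k-1\}$, and one computes $H^0(G_l,\Ad^*(\bar\rho_{f,\lambda}))$ in terms of the Frobenius eigenvalues $\alpha_l,\beta_l$ on the associated filtered module: it is nonzero precisely when $\alpha_l/\beta_l \equiv l \pmod\lambda$ or $\beta_l/\alpha_l\equiv l\pmod\lambda$, equivalently $a_l(f)^2\equiv (1+l^{-1})^2\epsilon(l) l \equiv \epsilon(l)l \cdot(\text{unit})\pmod\lambda$ after the identification $\alpha_l\beta_l=\epsilon(l)l^{k-1}$, $\alpha_l+\beta_l=a_l(f)$. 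When $k\geq 3$ the Hodge--Tate weights are $0$ and $k-1\geq 2$, so $l/\,\cdot\,$ contributes a factor $l^{k-1}$ versus $l^{0}$, and the would-be congruence $\alpha_l/\beta_l\equiv l^{j}$ with $j\in\{k-2,\,-(k-2),\dots\}$ relates two quantities of different ``weight'', which again forces a fixed nonzero algebraic number to be a root of unity — impossible for all but finitely many $\lambda$; hence $h^2=0$ for almost all $\lambda$ and the presentation $R_{f,S,\lambda}\cong W(k_\lambda)\llbracket X_1,X_2,X_3\rrbracket$ follows from Mazur's presentation together with the Euler characteristic formula (which gives $h^1=3$ over $\Q$ when $\bar\rho$ is odd, $n=2$). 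When $k=2$ the two Hodge--Tate weights are $0$ and $1$, so the obstruction congruence $a_l(f)^2\equiv\epsilon(l)\pmod\lambda$ together with $\bar\rho_\lambda|_{G_l}$ semisimple is a genuine condition that cannot be excluded by a weight argument; one shows instead, by a Chebotarev/Rajan-type argument on the compatible system, that the set $T$ of such $\lambda$ has density zero, and that outside $T$ only finitely many $\lambda$ are obstructed.

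\medskip

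The main obstacle is the local analysis at $v=l$ in the $k=2$ case: unlike the bad-prime terms, where a clean ``non-root-of-unity'' argument kills them, here the congruence $a_l(f)^2\equiv\epsilon(l)\pmod\lambda$ can genuinely occur infinitely often a priori, so the crux is the density-zero estimate for $T$ — this requires knowing enough about the image of the compatible system (openness of the adelic image, or at least equidistribution of Frobenii) to conclude that the condition $a_l^2=\epsilon(l)$ cuts out a positive-codimension, hence density-zero, subset of places. A secondary technical point is ensuring the crystalline/Fontaine--Laffaille comparison is valid: this needs $l$ large compared to $k$, which is harmless since we only want ``almost all $\lambda$,'' but it must be stated carefully.
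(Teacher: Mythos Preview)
Your plan has a genuine gap at its very first step. The injection you display,
\[
H^2(G_{\Q,S\cup\{l\}},\Ad(\bar\rho_{f,\lambda})) \hookrightarrow \bigoplus_{v\in S\cup\{l\}} H^2(G_v,\Ad(\bar\rho_{f,\lambda})) \;\oplus\; H^0(G_{\Q,S\cup\{l\}},\Ad^*(\bar\rho_{f,\lambda}))^\vee,
\]
does not follow from the nine-term Poitou--Tate sequence. In that sequence the global $H^0(\Ad^*)^\vee$ appears as the \emph{cokernel} of the map $H^2\to\bigoplus_v H^2_v$, not as a receptacle for $H^2$; the \emph{kernel} of that localization map is $\Sha^2(\Ad)\cong\Sha^1(\Ad(1))^\vee$, and this is a subquotient of $H^1(G_{\Q,S\cup\{l\}},\Ad(1))$, not of $H^0$. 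Your argument that the global $H^0$ vanishes (via Schur; incidentally, $\bar\rho\cong\bar\rho(1)$ forces $\bar\chi_{\mathrm{cyc}}^2=1$, not $\bar\chi_{\mathrm{cyc}}=1$, though the conclusion $l\le3$ is the same) therefore does nothing toward establishing the injection. You acknowledge this in the phrase ``up to \ldots\ being controlled by the dual Selmer group,'' but you never say how to control it, and without that the local analysis in the rest of your proposal is moot.

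The paper's proof supplies exactly this missing piece, and it is the heart of the matter: one first invokes the modularity lifting theorems of Wiles and Taylor--Wiles to prove that the minimal deformation ring of $\bar\rho_{f,\lambda}$ is isomorphic to a localized Hecke algebra, which for $l\gg0$ is just $W(k_\lambda)$. This forces the minimal Selmer group $H^1_{\mathcal L}(G_{\Q,S\cup\{l\}},\Ad^0(\bar\rho_{f,\lambda}))$ to vanish. The Greenberg--Wiles formula, which in this balanced (defect-zero) situation over $\Q$ gives $\dim H^1_{\mathcal L^\perp}=\dim H^1_{\mathcal L}$, then yields the vanishing of the dual Selmer group $H^1_{\mathcal L^\perp}(G_{\Q,S\cup\{l\}},\Ad^0(\bar\rho_{f,\lambda})(1))$. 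Inserting this zero into the Poitou--Tate sequence is what produces the honest injection $H^2\hookrightarrow\bigoplus_v H^2_v$, after which the local analysis (where your sketch is largely on the right track) finishes the argument. The $R=\mathbb T$ input is not optional here; vanishing of $\Sha^1(\Ad^0(1))$ is essentially a piece of the Bloch--Kato conjecture for the adjoint and is not known by any more elementary route.
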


\begin{remark}
The case of weight $k=1$ is subtler, and relates to one of the main themes of the paper as we see in   \S  \ref{sec:Artin}  and \S  \ref{ExampleCrefined}.
\end{remark}

\begin{proof}[Sketch of Proof] For the full proof of the theorem, see \cite[Sect.~5.3]{Wes}. The sketch is as follows. Consider the minimal deformation ring $R$ associated to $\bar{\rho}_{f,\lambda}$, where $\lambda$ is of characteristic $l \gg 0$. By the modularity lifting theorems of Wiles and Taylor-Wiles, one shows that $R$ is isomorphic to a Hecke ring acting on $S_2(\Gamma_1(N),\Sc{O})_{\I{m}_{\lambda}}$. This Hecke ring is isomorphic to $W(k_{\lambda})$, whence the isomorphism $R \cong W(k_{\lambda})$ follows. Thus, the Zariski tangent space of $R$ is trivial, which means that a Selmer group $H^1_{\Sc{L}}(G_{\Q,S \cup \{l \}},\Ad^0(\bar{\rho}_{f,\lambda})) = 0$. One then applies the Greenberg-Wiles formula to obtain the vanishing of the corresponding dual Selmer group $H^1_{\Sc{L}^{\perp}}(G_{\Q,S \cup \{l\}},\Ad^0(\bar{\rho}_{f,\lambda})(1)) = 0$.\medskip

By examining the Poitou-Tate exact sequence, we obtain
$$0 \to H^1(G_{\Q,S \cup \{l\}},\Ad^0(\bar{\rho}_{f,\lambda})) \to \bigoplus_{v \in S \cup \{l\}} H^1(\Q_v,\Ad^0(\bar{\rho}_{f,\lambda}))/\Sc{L}_v \to $$ $$ 0 \to H^2(G_{\Q,S \cup \{l\}},\Ad^0(\bar{\rho}_{f,\lambda})) \to \bigoplus_{v \in S \cup \{l\}} H^2(\Q_v,\Ad^0(\bar{\rho}_{f,\lambda})) \to H^0(G_{\Q},\Ad^0(\bar{\rho}_{f,\lambda})(1)).$$
If $k > 2$, then the local $H^2$-terms vanish for sufficiently large $l$. If $k=2$, then the local term $H^2(\Q_l,\Ad^0(\bar{\rho}_{f,\lambda}))$ may not vanish if $l$ is in the exceptional set of primes described in the theorem. This is why the two cases are split in the statement of the theorem. 
\end{proof}
Observe that the theorem is saying that the compatible system of
residual representations attached to newforms is generically unobstructed as defined in the previous section. For regular algebraic
irreducible polarized compatible system of representations in
\cite{BLGGT}, the method of Weston should generalize to prove generic
unobstructedness at a positive density of primes as $R = \T$ theorems
are available. There is work by David Guiraud on this question; see \cite{Guiraud}. A key question that requires thorough analysis is whether the local $H^2$-terms vanish using local-global compatibility results for Galois representations attached to regular, algebraic, conjugate, self-dual, cuspidal (RACSDC) forms provided by Ana Caraiani \cite{Car}. \medskip
\begin{remark}
Let us also note that a simple statistical model might suggest that the set $T$ in the statement of Theorem~\ref{thm:weston} is finite: As discussed in \cite[p.~157]{Mazur-Fern} one expects (if $f$ is $\Q$-rational) for infinitely many primes $l$ that $a_l(f)^2 = \epsilon(l)$ but that for any $x\gg0$ the number of such $l$ is of the order $O(\log\log x)$. A statistical model can be based on the Sato-Tate conjecture, now a theorem of Clozel, Harris and Taylor; see \cite{CHT} and \cite{Tay}; chances of $a_l(f)^2=\epsilon(l)$ are of the order $O(\frac1{\sqrt{l}})$. However there is also the issue of the semisimplicity of $\bar\rho_\lambda$ restricted to $G_l$. This might be expected to occur statistically $\frac1l$ many times, and this should be independent of $a_l(f)^2=\epsilon(l)$. Now since $\sum_l\frac1{l\sqrt{l}}$ is finite, one should expect $T$ to be finite. Results and conjectures in this direction are formulated in \cite{DW} and \cite{Gamzon}.
\end{remark}

After we leave the setting of RACSDC forms, we encounter  (as far as we know) open questions such as the following:
\begin{question} Let $K/\Q$ be imaginary quadratic and let $\pi$ be a regular, algebraic cusp form on $\GL_2(\A_K)$. Consider the compatible system $(\rho_{\pi,\lambda})$ associated to $\pi$. Is it true that for almost all $\lambda$, we have an injection $H^2(K_{S \cup \{l \}},\Ad(\bar{\rho}_{\pi,\lambda})) \hookrightarrow \bigoplus_{v \in S \cup \{l\}} H^2(K_v,\Ad(\bar{\rho}_{\pi,\lambda}))$?
\end{question}
 The difficulty of answering this question is  the following. Automorphy lifting theorems as in \cite{CG} would prove under some hypotheses  that a certain minimal Selmer group $H^1_{\Sc{L}}(G_{K,S \cup \{l\}},\Ad^0(\bar{\rho}_{\pi,\lambda})) = 0$ for almost all $\lambda$. However, the Greenberg-Wiles formula does not provide the vanishing of the corresponding dual Selmer group because we are not in the \emph{balanced} situation that Weston operated in for his theorem. In fact, in this ``defect one'' setting, the dual Selmer group has dimension $\dim H^1_{\Sc{L}^{\perp}}(G_{K,S \cup \{l\}},\Ad^0(\bar{\rho}_{\pi,\lambda})(1)) = 1+ \dim  H^1_{\Sc{L}}(G_{K,S \cup \{l\}},\Ad^0(\bar{\rho}_{\pi,\lambda}))=1$. Thus for all but finitely many $\lambda$, 
the methods in \cite{CG} would prove    
$$\dim H^2(G_{K,S \cup \{l\}},\Ad(\bar{\rho}_{\pi,\lambda})) \leq 1+\sum_{v \in S \cup \{l\}} \dim H^2(K_v,\Ad(\bar{\rho}_{\pi,\lambda})),$$ and not the sharper inequality $$\dim H^2(G_{K,S \cup \{l\}},\Ad(\bar{\rho}_{\pi,\lambda})) \leq \sum_{v \in S \cup \{l\}} \dim H^2(K_v,\Ad(\bar{\rho}_{\pi,\lambda})),$$  which we might expect to hold for almost all $\lambda$. 

\section{The Trivial Motive}
In the introduction, we posed the following question which has been studied in \cite{Gras}: If $F$ is a number field and $G_{F,p}$ denotes the maximal extension of $F$ unramified outside $p$ and $\infty$, then is $H^2(G_{F,p},\Z/p\Z) = 0$ for all but finitely many $p$? As remarked after the question, this is easy in the case in the case of $F = \Q$ and in the case when $F$ is an imaginary quadratic field. In this section, we will examine mainly  the real quadratic setting. \medskip

The heuristic we give in the real quadratic situation, which is close to \cite{Gras},  is very similar to counting Wieferich primes.   We refer the reader to \cite{Gras} for much finer heuristics about this question.

\begin{definition} A prime $p$ is a \emph{Wieferich prime} if $2^{p-1} \equiv 1 \Mod p^2$. 
\end{definition}
Notice that $2^{p-1} \equiv 1 \Mod p$ is Fermat's Little Theorem, so Wieferich primes require divisibility by an additional power of $p$. It is not currently known whether there are infinitely many Wieferich primes, nor whether there are infinitely many non-Wieferich primes. These are incredibly difficult to find. However, there are guesses as to how many there should be, based on the following heuristic argument from \cite{CDP}. \medskip

Since $2^{p-1} \equiv 1 \Mod p$, we know that mod $p^2$, we must have $2^{p-1} \equiv 1+kp \Mod p^2$, where $0 \leq k \leq p-1$. If $k=0$, then $p$ is a Wieferich prime. The probability of this happening should be $\frac{1}{p}$. If we treat each prime as ``independent'' events, then the number of Wieferich primes less than or equal to some number $X$ should be
$$\sum_{p \leq X} \frac{1}{p}.$$ This sum grows like $\log \log X$. \medskip

A nice way of viewing this analysis is the following. We have a set, namely \[K = \ker \left(\Z/p^2\Z \to \Z/p\Z \right)\] and we have a given point that we want to hit, namely $1 + p^2 \Z \in K$. We are treating our object $2^{p-1} + p^2\Z \in K$ as a random point in $K$ and asking for the probability that this random point is our desired target point.\medskip

With this in mind, we return to our original question about the vanishing of $H^2(G_{F,p},\Z/p\Z)$.  By the Euler-Poincar\'{e} characteristic formula, if $h^i = \dim_{\Z/p\Z} H^i(G_{F,p},\Z/p\Z)$, we know
$$h^1 = 1+h^2.$$ If $p$ does not divide the class number of $F$, the group $H^1(G_{F,p},\Z/p\Z) = \Hom(G_{F,p},\Z/p\Z)$ is dual to the $p$-part of the ray class group of $F$ of conductor $p^2$. We therefore have an exact sequence,
$$1 \to \frac{\Sc{O}_F^{\times} \cap (1+p\Sc{O}_F)}{\Sc{O}_F^{\times} \cap (1+p^2\Sc{O}_F)} \to \frac{(1+p\Sc{O}_F)}{(1+p^2\Sc{O}_F)} \to H^1(G_{F,p},\Z/p\Z)^{\vee} \to 1.$$ 
By counting dimensions, we see that $$h^1 = 2-\dim_{\Z/p\Z} \frac{\Sc{O}_F^{\times} \cap (1+p\Sc{O}_F)}{\Sc{O}_F^{\times} \cap (1+p^2\Sc{O}_F)}.$$ Comparing the two expressions, we see that $h^2 = 0$ precisely when $$ \dim_{\Z/p\Z}\frac{\Sc{O}_F^{\times} \cap (1+p\Sc{O}_F)}{\Sc{O}_F^{\times} \cap (1+p^2\Sc{O}_F)} = 1.$$ If $\epsilon$ denotes the fundamental unit of $F$, then $h^2 \neq 0$  is the same as saying that $\epsilon^{p^2-1} \in \Sc{O}_F^{\times} \cap (1+p\Sc{O}_F)$ is $p$-th power. Equivalently, that $\epsilon^{p^2-1} \equiv 1 \Mod p^2\Sc{O}_F$. One would expect this to happen with probability $1/p$, and so the number of primes $p$ up to $X$ for which $h^2 \neq 0$ should be 
$$\sum_{p \leq X} \frac{1}{p},$$ which, as in the Wieferich primes setting, again grows like $\log \log X$. From this, we should expect a density one set of primes for which $H^2(G_{F,p},\Z/p\Z) = 0$. This seems very hard  to prove, but the work of \cite{Sil} suggests that assuming the $abc$ conjecture, one could show that this happens for at least  $c \log X$ primes $p \leq X$, where $c$ is a nonzero constant. We used magma to check the primes $1 < p<10000$ for which $H^2(G_{F,p},\Z/p\Z) \neq 0$ as $F$ ranges over real quadratic fields $F = \Q(\sqrt{D})$ for $2 \leq D \leq 30$. 
\begin{table}[htb]\begin{center}
\begin{tabular}{c|c}
$D$ & $p$ \\
\hline
$2$ & $13$, $31$ \\
$3$ & $103$ \\
$5$ & \\
$6$ & $7$, $523$ \\
$7$ & \\
$10$ & $191$, $643$ \\
$11$ & \\
$13$ & $241$ \\
$14$ & $2$ \\
$15$ & $181$, $1039$, $2917$ \\
$17$ & \\
$19$ & $79$ \\
$21$ &  \\
$22$ & $43$, $73$, $409$ \\
$23$ & $7$, $733$ \\
$26 $& $2683$, $3967$ \\
$29$ & $3$, $11$ \\
\end{tabular}\medskip
 \end{center}
\caption{}\label{TableZero}
\end{table}

The data is shown in the Table~\ref{TableZero} on page~\pageref{TableZero}. The entries indicate that in the small cases considered, the nonvanishing of $H^2(G_{F,p},\Z/p\Z)$ seems to be quite rare, but as  $\log \log X$ grows so slowly almost no amount data would be  convincing enough. Observe that the analysis above mirrors the earlier analysis of the Weiferich primes. Cf.~\cite{Katz} for similar results for Wieferich primes and  geometric analogs. In this paper we are exploring the Wieferich type phenomenon for Galois cohomology.\medskip

We can offer another perspective on the question, where now we let $F$ be any totally real number field. By a formula of Colmez, see \cite{Colmez}, if $\zeta_{F,p}(s)$ denotes the Deligne-Ribet $p$-adic zeta function of $F$, then 
$$\text{Res}_{s=1}\zeta_{F,p}(s) = \frac{2^dR_{F,p}h_F}{2\sqrt{D_F}},$$ where $R_{F,p}$ is the $p$-adic regulator of $F$. This is in parallel with the Dirichlet-Dedekind class number formula for the zeta function $\zeta_F(s)$:
$$\text{Res}_{s=1} \zeta_F(s) = \frac{2^dR_{F,\infty}h_F}{2\sqrt{D_F}},$$ where $R_{F,\infty}$ is the classical regulator of $F$. The existence of a pole at $s=1$ of $\zeta_{F,p}(s)$ is equivalent to Leopoldt's conjecture for $F$ and $p$. On the other hand, for almost all primes $p$, the vanishing of $H^2(G_{F,p},\Z/p\Z)$ is equivalent to $\text{Res}_{s=1}\zeta_{F,p}(s)$ being a $p$-adic unit. Thus, for almost all $p$, our question is a refinement of the Leopoldt conjecture that not only does the Deligne-Ribet $p$-adic zeta function have a pole at $s=1$, but that the residue is also a $p$-adic unit. 

\section{Characteristic Zero}
We switch gears and consider the characteristic zero question. Let $F$ be a number field, and let $S$ be a finite set of places of $F$ that contains all places above $p$ and $\infty$. Let $V$ be a $p$-adic representation of $G_{F,S}$ that is pure of weight $w \neq -1$. An example of such a $V$ would be the adjoint motive of a pure motive, which would be of weight zero. We have the following conjecture due to Jannsen.
\begin{conjecture}[{ \cite[Conj.~1]{Jan}}] We have
$$\dim H^2(G_{F,S},V) = -\dim H^0(G_{F,S}, V^*(1)) + \sum_{v \in S^{\infty}} \dim H^0(G_{F_v},V^*(1)).$$
\end{conjecture}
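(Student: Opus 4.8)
The plan is to transform the conjecture, by means of global (Poitou--Tate) duality, into the vanishing of a Tate--Shafarevich-type group, and then to isolate the arithmetic input that this vanishing genuinely demands. First I would feed $V$ into the nine-term Poitou--Tate exact sequence, arguing as usual with a $G_{F,S}$-stable $\Z_p$-lattice and $\Z/p^n$-coefficients and then passing to the inverse limit and tensoring with $\Q_p$. Its tail reads
\[H^1(G_{F,S},V^*(1))^\vee\longrightarrow H^2(G_{F,S},V)\longrightarrow\bigoplus_{v\in S}H^2(G_{F_v},V)\longrightarrow H^0(G_{F,S},V^*(1))^\vee\longrightarrow 0,\]
so, writing $\Sha^2(G_{F,S},V):=\ker\bigl(H^2(G_{F,S},V)\to\bigoplus_{v\in S}H^2(G_{F_v},V)\bigr)$ and using surjectivity of the last map,
\[\dim H^2(G_{F,S},V)=\dim\Sha^2(G_{F,S},V)+\sum_{v\in S}\dim H^2(G_{F_v},V)-\dim H^0(G_{F,S},V^*(1)).\]
By local Tate duality $\dim H^2(G_{F_v},V)=\dim H^0(G_{F_v},V^*(1))$ at finite $v$, and for $p$ odd the archimedean places contribute nothing to $H^2$ (for $p=2$ one carries along the evident correction at the real places); thus $\sum_{v\in S}\dim H^2(G_{F_v},V)=\sum_{v\in S^{\infty}}\dim H^0(G_{F_v},V^*(1))$, where $S^{\infty}$ denotes the set of finite places of $S$. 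Comparing with the predicted formula shows that \emph{Jannsen's conjecture for $(V,F,S)$ is equivalent to the single assertion $\Sha^2(G_{F,S},V)=0$}, i.e.\ to the injectivity of $H^2(G_{F,S},V)\to\bigoplus_{v\in S}H^2(G_{F_v},V)$.

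The next step is to dualize: by global duality $\Sha^2(G_{F,S},V)\cong\Sha^1(G_{F,S},V^*(1))^\vee$, so it remains to show that every class in $H^1(G_{F,S},V^*(1))$ which is locally trivial at all $v\in S$ is zero. Such a class is unramified outside $S$ and, a fortiori, locally trivial at the places above $p$; hence it lies in the Bloch--Kato Selmer group $H^1_f(G_F,V^*(1))$ and, in fact, in the kernel of the localization map $H^1_f(G_F,V^*(1))\to\bigoplus_{v|p}H^1(G_{F_v},V^*(1))$. Therefore the conjecture follows once this localization-at-$p$ map is known to be injective. This is where the hypothesis $w\neq-1$ is meant to enter: it keeps $V^*(1)$ off the centre of its functional equation — where such an injectivity statement would be false — and makes the Bloch--Kato local conditions $H^1_e\subseteq H^1_f\subseteq H^1_g$ at $p$ coincide, so that the Selmer-group bookkeeping above is clean.

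\textbf{The main obstacle.} The injectivity statement of the previous step is the whole content: it is a non-vanishing assertion for a $p$-adic regulator-type pairing, a consequence of the Bloch--Kato / Fontaine--Perrin-Riou conjectures, and it is known unconditionally only for restricted families — Tate twists over abelian fields (Soulé, Beilinson--Deligne), CM abelian varieties and their twists, modular motives in suitable ranges (Kato). Indeed, taking $V=\Q_p$ and $S=S_p\cup S_\infty$, the reduction above becomes exactly the equivalence, recalled in the introduction, between Leopoldt's conjecture for $(F,p)$ and $H^2(G_{F,S_p\cup S_\infty},\Q_p)=0$; thus Jannsen's conjecture simultaneously generalizes Leopoldt, and any full proof must already contain it. I would accordingly expect an unconditional proof only after imposing automorphy or big-image hypotheses that place $V^*(1)$ in a range where an Euler system or an Iwasawa Main Conjecture supplies the required non-vanishing; everything else in the argument is formal.
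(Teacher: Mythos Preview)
The statement in question is a \emph{conjecture}, not a theorem: the paper attributes it to Jannsen, states it without proof, and then only records what is known in special cases (adjoint representations of classical newforms via Flach, Mazur, Weston, Diamond--Flach--Guo, Kisin, et al.; weight one forms via Bella\"iche--Dimitrov using Baker--Brumer; the case $V=\Q_p$ being Leopoldt). So there is no ``paper's own proof'' to compare against, and your proposal is not expected to be a proof either.

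That said, your analysis is correct and lines up exactly with the paper's discussion. Your reduction via Poitou--Tate to $\Sha^2(G_{F,S},V)=0$, and then by duality to $\Sha^1(G_{F,S},V^*(1))=0$, is the standard reformulation of Jannsen's conjecture, and your observation that the special case $V=\Q_p$ recovers Leopoldt is precisely what the paper notes in the paragraph following the conjecture. You have correctly identified that the conjecture is wide open in general and that any unconditional argument must already subsume Leopoldt; this is why the paper treats it as a conjecture and moves on. One small comment: your appeal to Bloch--Kato local conditions and the $w\neq -1$ hypothesis is a reasonable heuristic for why the conjecture should hold, but it is not part of the logical reduction --- the equivalence with $\Sha^2=0$ holds for any $V$, and the purity hypothesis enters only as motivation for believing the vanishing.
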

\begin{remark} By the Euler-Poincar\'{e} characteristic formula, this conjecture is equivalent to the statement 
$$\dim H^1(G_{F,S},V) = [F:\Q]\dim(V) + \sum_{v \in S^{\infty}} \dim H^0(G_{F_v},V^*(1)) - \sum_{v | \infty} \dim H^0(G_{F_v},V)$$ $$ + (\dim H^0(G_{F,S},V) - \dim H^0(G_{F,S},V^*(1))).$$
\end{remark}
In the setting $F=\Q$ and $V$ is the adjoint $p$-adic representation arising from a newform $f$ of weight $k \geq 2$, this is a known result due to Flach, Mazur, Weston, Diamond-Flach-Guo, Kisin et. al., e.g. \cite{DFG,Wes}. For weight $k=1$, this is proved in \cite{BD}  using  Baker-Brumer result on independence over $\overline{\Q}$ of $p$-adic logarithms of algebraic numbers that are independent over $\Q$. \medskip

If $V = \Q_p$, then this conjecture is equivalent to the Leopoldt conjecture for $F$ and $p$, which, as remarked in the introduction, is equivalent to $H^2(G_{F,p},\Q_p) = 0$. \medskip

We could study the case of Bianchi modular forms.  Let $K$ be an imaginary quadratic field, and let $\pi$ be a cuspidal cohomological form on $\GL_2(\A_K)$. This gives rise to a compatible system of representations of $G_K$, which we denote $(\rho_{\pi,\lambda})$. The representation $\Ad^0(\rho_{\pi,\lambda})$ is expected to be pure of weight zero since $\rho_{\pi,\lambda}$ is expected  to be pure. We can, therefore, try to examine Jannsen's conjecture in the case when $V = \Ad^0(\rho_{\pi,\lambda})$.   Using automorphy lifting methods,  for  forms that are ``generic'' it  can be proven that  $H^2(G_{K,S}, \Ad^0(\rho_{\pi,\lambda}))$ has dimension at most one, but it seems a hard problem to prove that the dimension is  0 as predicted by Jannsen.

\section{Artin Representations}\label{sec:Artin}
In this section, we will consider a system of representations arising from Artin representations over number fields. 
{This section is  due to Frank Calegari.}

First, we will work over $\Q$ and then move to the general situation of an arbitrary number field using Shapiro's lemma. Let us note again, that the heuristics we employ should be compared with that of Section~7 in the recent preprint \cite{JKPSZ}.
\subsection{Artin Representations over $\Q$} \label{subsec:artin-reps-over-Q} Let $W_{\C}$ be a finite dimensional $\C$-vector space, and let $\rho: G_{\Q} \to \Aut(W_{\C})$ be a continuous, irreducible representation. Let $E = \overline{\Q}^{\ker \rho}$ be the splitting field of the representation, so that $G = \Gal(E/\Q)$ is isomorphic to $\im \rho$. Denote by $S$ the set of places at which $E/\Q$ is ramified together with the place $\infty$.\footnote{In fact, in this subsection we can take for $E$ any finite Galois extension of $\bbQ$ such that $\rho$ is trivial on $G_E$; we shall use this observation in Subsection~\ref{subsec:ArtinOverNF}.} There is a canonical, minimal abelian field $K$ containing the traces of the action of $G$ on $W_{\C}$, and this field comes with an embedding $K \hookrightarrow \C$. For a place $l$ denote by $G_l\subset G$ the decomposition group at $l$. We will consider primes $p$ for which:
\begin{itemize}
\item[(Hyp 1)] The prime $p$ does not divide $|G|$.
\item[(Hyp 2)] The prime $p$ is unramified in $E$.
\item[(Hyp 3)] The prime $p$ does not divide the class number of $E$.
\item[(Hyp 4)] The prime $p$ is odd. {
\item[(Hyp 5)] For all $l$ in $S$, the restriction $W|_{G_l}$ contains no factor of dimension $1$ on which $\Frob_l$ acts as multiplication by $l$.
}
\end{itemize}
{ Note that (Hyp 5) holds if $p$ does not divide $l^m-1$ for any proper divisor $m$ of $[E:\Q]$, and hence for instance if $p\ge \max\{l^{[E:\Q]}\mid l \in S\}$.}
\medskip

Because of (Hyp 1), the representation $W_{\C}$ admits a model $W_{\Sc{O}}$ over the completion $\Sc{O}:=\Sc{O}_{K,\I{p}}$ at any prime $\I{p}$ above $p$. We will let $\overline{W}$ denote the mod $p$ reduction, namely $\overline{W} := W_{\Sc{O}}/\I{p}$. This reduction will be an absolutely irreducible, faithful representation of $G$ over $k:=\Sc{O}/\I{p}$. We could view $\overline{W}$ as a vector space over $\F_p$ instead of as one over $k$. When considering $\overline{W}$ as such a space, we will write it as $\overline{W}_{\F_p}$. Notice that $\overline{W}_{\F_p}$ is an irreducible $\F_p[G]$-module, and that $$\dim_{\F_p} \overline{W}_{\F_p} = [k:\F_p]\dim_{\C}(W_{\C}).$$ Note, also, that $\overline{W}_{\F_p} \otimes_{\F_p} k$ will decompose as the direct sum of $[k:\F_p]$ irreducible, non-isomorphic $k[G]$-modules, each of dimension $\dim_{\C}(W_{\C})$ over $k$, one of which is $\overline{W}$. Indeed, the irreducible $k[G]$-submodules of $\overline{W}_{\F_p} \otimes_{\F_p} k$ are indexed by the elements of $\Gal(k/\F_p)$ (see [Wiese]). In particular, if $M$ is an $\F_p[G]$-module, then we have an isomorphism
\begin{equation} \Hom_{\F_p[G]}(\overline{W}_{\F_p},M) \cong \Hom_{k[G]}(\overline{W},M \otimes_{\F_p} k) \label{eq:1} \end{equation}
{ For various explicit computations we also recall from \cite[Prop.~3.2]{Boston} the isomorphisms
\begin{equation}\label{eq:FromBoston}
\cO_E^\times/(\cO_E^\times)^p\oplus \F_p \cong \Ind_{G_\infty}^G\F_p \quad\hbox{and}\quad \cO_E/p\cO_E\cong \F_p[G]
\end{equation}
as $\F_p[G]$-modules, where $G_\infty$ is the decomposition group of $G$ at the infinite place; here we use (Hyp 2), so that $E$ contains no primitive $p$-th root of unity and is unramified at~$p$.} 
%

\medskip

The first question to tackle is to try and understand the expected dimension over $k$ of $H^1(G_{\Q,S\cup\{p\}},\overline{W})$, where as in the introduction, $G_{\Q,S\cup\{p\}}$ denotes the Galois group of the maximal extension of $\Q$ unramified outside $S\cup\{p\}$. Letting $h^i$ denote the dimension over $k$ of the corresponding cohomology group, the global Euler characteristic formula yields:
\begin{equation} h^1(G_{\Q,S\cup\{p\}},\overline{W})-h^2(G_{\Q,S\cup\{p\}},\overline{W}) = h^0(G_{\Q,S\cup\{p\}},\overline{W})+\dim(\overline{W}) - h^0(G_{\R},\overline{W}), \label{eq:2} \end{equation}
with $G_\R=\Gal(\C/\R)$ embedded into $G_\Q$. This gives a lower bound on $h^1(G_{\Q,S\cup\{p\}},\overline{W})$. \medskip

On the other hand, by (Hyp 3), we can understand $h^1(G_{\Q,S\cup\{p\}},\overline{W})$ in terms of global units. Indeed, the inflation-restriction sequence gives an exact sequence 
$$0 \to H^1(G,\overline{W}^{G_{E,S\cup\{p\}}}) \to H^1(G_{\Q,S\cup\{p\}},\overline{W}) \to H^1(G_{E,S\cup\{p\}},\overline{W})^G \to H^2(G,\overline{W}^{G_{E,S\cup\{p\}}});$$ 
here $G_{E,S\cup\{p\}}$ denotes the Galois group of the maximal extension of $E$ unramified outside the places of $E$ above $S\cup\{p\}$. Since the cardinality of $\overline{W}{}^{G_{E,S\cup\{p\}}} = \overline{W}$ is prime to $|G|$ by (Hyp 1), we get that $H^i(G,\overline{W}) = 0$ for $i>0$. { Moreover by (Hyp 5), the inclusion $H^1_{{\rm unr}}(G_{E_\lambda},\overline{W})^{G_l}\to H^1(G_{E_\lambda},\overline{W})^{G_l}$ is an isomorphism for $\lambda$ a place of $E$ above~$l$: the map is isomorphic to $\Hom_{G_l}(\Z,\overline{W})\to \Hom_{G_l}(E^\times_\lambda,\overline{W})$ induced from the valuation map $E_\lambda^\times\to\Z$, and so its cokernel is $\Hom_{G_l}(\cO_{E_\lambda}^\times,\overline{W})$ vanishes by (Hyp 5). It follows that  the inclusion $H^1(G_{E,\{p,\infty\}},\overline{W})^G\to H^1(G_{E,S\cup\{p\}},\overline{W})^G$ is an isomorphism.} Thus, there are isomorphisms
\begin{eqnarray*}
H^1(G_{\Q,S\cup\{p\}},\overline{W}) &\cong &H^1(G_{E,S\cup\{p\}},\overline{W})^G \\&\cong &H^1(G_{E,\{p,\infty\}},\overline{W})^G \\ &\cong & (\Hom(G_{E,\{p,\infty\}},\F_p) \otimes_{\F_p} \overline{W})^G.
\end{eqnarray*}
Now by (Hyp 3), we may identify $\Hom(G_{E,\{p,\infty\}},\F_p)$ with the dual of the $p$-part of the ray class group of conductor $p^2$. That is, { we have a right exact sequence of $\F_p[G]$-modules
\begin{equation}\label{SES-WtOne}
\cO_E^\times\otimes_\Z\F_p \to (\cO_{E_p}^\times)/(1+p^2\cO_{E_p})\otimes_\Z\F_p \to H^1(G_{E,\{p,\infty\}},\F_p)^{\vee} \to 0,
\end{equation}
where $\cO_{E_p}$ is the completion of $\cO_E$ at $p$. The central term is isomorphic to the group $(1+p\cO_{E_p})/(1+p^2\cO_{E_p})$, and via the $p$-adic logarithm map $1+pz \Mod p^2 \mapsto z \Mod p$ the latter is isomorphic to $\cO_E/p\cO_E\cong \prod_{v|p} k_v $ as an $\F_p[G]$-module, with $k_v$ the residue field of $E$ at $v$. Denote by $s$ the exponent of the finite group $\prod_{v|p}k_v^\times$. then $H^1(G_{E,\{p,\infty\}},\F_p)^{\vee}$ is isomorphic to the cokernel of the $G$-equivariant homomorphism
\begin{equation}\label{SES-WtOne2}
\cO_E^\times \to \cO_E/p\cO_E,\alpha\mapsto \frac1p\big(\alpha^s-1\big).
\end{equation}
}
We have the following lemma:
\begin{lemma} There are equalities
\begin{eqnarray*}
\dim_k \Hom_{k[G]}(\overline{W},(\Sc{O}_E^{\times}/(\Sc{O}_E^{\times})^p)\otimes_{\F_p} k) &=& \dim_{\F_p} \Hom_{\F_p[G]}(\overline{W}_{\F_p},\Sc{O}_E^{\times}/(\Sc{O}_E^{\times})^p) \\&=& h^0(G_{\R},\overline{W}) - h^0(G_{\Q,S\cup\{p\}},\overline{W}).
\end{eqnarray*}
\end{lemma}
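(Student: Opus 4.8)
The plan is to establish the two claimed equalities separately. The first equality — between a $\dim_k$ of $\Hom_{k[G]}$ and a $\dim_{\F_p}$ of $\Hom_{\F_p[G]}$ — is a formal consequence of the isomorphism \eqref{eq:1} applied to the $\F_p[G]$-module $M = \Sc{O}_E^\times/(\Sc{O}_E^\times)^p$. Indeed, \eqref{eq:1} gives $\Hom_{\F_p[G]}(\overline{W}_{\F_p},M) \cong \Hom_{k[G]}(\overline{W}, M\otimes_{\F_p} k)$ as abelian groups; the right-hand side is naturally a $k$-vector space, and counting $\F_p$-dimensions on both sides, using $\dim_{\F_p} = [k:\F_p]\cdot\dim_k$ for a $k$-vector space, yields the first equality once one checks that the two sides have the same $\F_p$-dimension. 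So the first equality needs little more than bookkeeping with \eqref{eq:1}.

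The substance is in the second equality, relating $\dim_{\F_p}\Hom_{\F_p[G]}(\overline{W}_{\F_p}, \Sc{O}_E^\times/(\Sc{O}_E^\times)^p)$ to $h^0(G_\R,\overline{W}) - h^0(G_{\Q,S\cup\{p\}},\overline{W})$. Here I would invoke the first isomorphism of \eqref{eq:FromBoston}, namely $\Sc{O}_E^\times/(\Sc{O}_E^\times)^p \oplus \F_p \cong \Ind_{G_\infty}^G \F_p$ as $\F_p[G]$-modules. Applying $\Hom_{\F_p[G]}(\overline{W}_{\F_p},-)$ and using additivity of $\Hom$ over direct sums gives
\[
\Hom_{\F_p[G]}(\overline{W}_{\F_p},\Sc{O}_E^\times/(\Sc{O}_E^\times)^p) \oplus \Hom_{\F_p[G]}(\overline{W}_{\F_p},\F_p) \cong \Hom_{\F_p[G]}(\overline{W}_{\F_p},\Ind_{G_\infty}^G\F_p).
\]
Now $\Hom_{\F_p[G]}(\overline{W}_{\F_p},\F_p) = (\overline{W}_{\F_p}^\vee)^G$, whose $\F_p$-dimension equals $h^0(G_{\Q,S\cup\{p\}}, \overline{W}^\vee)$; but one should be careful here — I actually want $h^0$ of $\overline{W}$ itself, not its dual. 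Since $\overline{W}$ is absolutely irreducible and faithful and $G$ is nontrivial (otherwise the statement is about the trivial motive, treated in \S4), $\Hom_{\F_p[G]}(\overline{W}_{\F_p},\F_p) = 0$ unless $\overline{W}$ is the trivial representation, in which case it is $1$-dimensional; and the same holds for $\overline{W}^G$. So in fact $\dim_{\F_p}\Hom_{\F_p[G]}(\overline{W}_{\F_p},\F_p) = h^0(G_{\Q,S\cup\{p\}},\overline{W})$ (both are $0$ or $1$ according to whether $\overline{W}$ is trivial). On the other side, by Frobenius reciprocity $\Hom_{\F_p[G]}(\overline{W}_{\F_p},\Ind_{G_\infty}^G\F_p) \cong \Hom_{\F_p[G_\infty]}(\overline{W}_{\F_p},\F_p) = (\overline{W}_{\F_p}^\vee)^{G_\infty}$, whose $\F_p$-dimension is $h^0(G_\R,\overline{W}^\vee)$. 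Again since $G_\infty$ has order dividing $2$ and $p$ is odd, $\overline{W}_{\F_p}$ is semisimple as a $G_\infty$-module and $\overline{W}_{\F_p}^\vee \cong \overline{W}_{\F_p}$ as $\F_p[G_\infty]$-modules (a rank-$1$ or rank-$2$ representation of a group of order $\le 2$ is self-dual), so $h^0(G_\R,\overline{W}^\vee) = h^0(G_\R,\overline{W})$. Combining these identifications, taking $\F_p$-dimensions in the displayed direct-sum decomposition and rearranging gives exactly $\dim_{\F_p}\Hom_{\F_p[G]}(\overline{W}_{\F_p},\Sc{O}_E^\times/(\Sc{O}_E^\times)^p) = h^0(G_\R,\overline{W}) - h^0(G_{\Q,S\cup\{p\}},\overline{W})$.

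The main obstacle, and the point that needs the most care, is the self-duality bookkeeping: \eqref{eq:FromBoston} and Frobenius reciprocity naturally produce $\Hom$ into $\F_p$, i.e.\ invariants of the \emph{dual} $\overline{W}_{\F_p}^\vee$, whereas the statement is phrased in terms of $\overline{W}$. Reconciling these requires the observation that both the ambient modules $\F_p$ and $\Ind_{G_\infty}^G\F_p$ are self-dual $\F_p[G]$-modules (the latter because $G_\infty$ has order prime to $p$ and $\Ind = \Ind$-of-self-dual is self-dual, or by a direct check), together with the fact that for the purposes of computing $h^0$ over $\R$ and over $\Q$ one may freely replace $\overline{W}$ by $\overline{W}^\vee$ since these cohomology groups only see whether the trivial representation is a constituent, which is a $\vee$-stable property. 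Once this is set up cleanly the rest is formal. I would also double-check that (Hyp 1)–(Hyp 4) are exactly what is needed to license \eqref{eq:FromBoston} (no $p$-th roots of unity in $E$, $p$ unramified, $p\nmid|G|$) and \eqref{eq:1}.
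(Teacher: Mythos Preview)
Your approach is essentially the paper's: use the $\F_p[G]$-isomorphism $\cO_E^\times/(\cO_E^\times)^p \oplus \F_p \cong \Ind_{G_\infty}^G\F_p$ from (\ref{eq:FromBoston}), apply $\Hom$ from $\overline{W}$, invoke Frobenius reciprocity, and then identify $\Hom(\overline{W},\text{trivial})$ with invariants of $\overline{W}$ via self-duality over the small groups $G_\infty$ and $G$. The paper carries this out over $k$ (tensoring (\ref{eq:FromBoston}) with $k$ and applying $\Hom_{k[G]}(\overline{W},-)$), thereby proving the equality of the first and third terms directly and never touching the middle term; you instead work over $\F_p$ with $\overline{W}_{\F_p}$ and pass through the middle.

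The gap in your write-up is the $[k:\F_p]$ bookkeeping. Isomorphism (\ref{eq:1}) identifies $\Hom_{\F_p[G]}(\overline{W}_{\F_p},M)$ and $\Hom_{k[G]}(\overline{W},M\otimes_{\F_p}k)$ as $k$-vector spaces (both carry a natural $k$-structure coming from $\overline{W}$), so it gives $\dim_{\F_p}(\text{middle term}) = [k:\F_p]\cdot\dim_k(\text{first term})$, not the bare equality you claim. Likewise your assertion that $\dim_{\F_p}(\overline{W}_{\F_p}^\vee)^{G_\infty}$ equals $h^0(G_\R,\overline{W}^\vee)$ is off by the same factor, since $h^i$ is by definition a $k$-dimension and $(\overline{W}_{\F_p})^{G_\infty}=(\overline{W}^{G_\infty})_{\F_p}$. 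In fact the lemma's middle term carries this same slip; the paper's proof is unaffected because it only establishes (and the sequel only uses) first $=$ third, where the stray factors of $[k:\F_p]$ would cancel. Your self-duality discussion is more explicit than the paper's (which leaves it tacit in ``taking dimensions, the result follows''), but note that $\overline{W}$ need not have rank $\le 2$: the correct reason $\overline{W}|_{G_\infty}$ is self-dual is simply that every representation of a group of order at most $2$ in odd characteristic is a sum of self-dual characters.
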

\begin{proof}
It suffices to show the equality between the first and the third term. Using (\ref{eq:FromBoston}), we deduce that
\begin{eqnarray*}
\lefteqn{ \Hom_{k[G]}(\overline{W},\Sc{O}_E^{\times}/(\Sc{O}_E^{\times})^p\otimes_{\F_p}k) \oplus  \Hom_{k[G]}(\overline{W},k)}\\
&\cong &  \Hom_{k[G]}(\overline{W},\Ind_{G_\infty}^Gk) \cong  \Hom_{k[G_\infty]}(\overline{W},k) .
\end{eqnarray*}
Taking dimensions, the result follows.
\end{proof}
Let $\delta(W):=h^0(G_{\R},\overline{W}) - h^0(G_{\Q,S\cup\{p\}},\overline{W})$. Tensoring the right exact sequence resulting from (\ref{SES-WtOne2}) with $\overline{W}_{\F_p}$, and taking $G$-invariants, which under (Hyp1) is an exact operation, produces a sequence of $\F_p$-vector spaces of the form
\begin{equation}\label{eq:random-map}
\F_p^{\delta(W)} \stackrel{\alpha_p}\to \F_p^{\dim (\overline{W})} \to \F_p^{h^1(G_{\Q,S\cup\{p\}},\overline{W})} \to 0.
\end{equation}
From equation (\ref{eq:2}), we find that $h^2(G_{\Q,S\cup\{p\}},\overline{W})=\dim\ker\alpha_p$. Thus $h^2(G_{\Q,S\cup\{p\}},\overline{W})$ vanishes precisely when $\alpha_p$ is injective. We can, therefore, try to determine when this is zero by treating $\alpha_p$ as a ``random'' map between $\F_p$-vector spaces, and asking when this random map is injective. 

\begin{lemma} Let $n \leq m$ be integers of size at least $1$. The probability that a random map $\F_p^n \to \F_p^m$ is injective is 
$$\prod_{i=0}^{n-1} \left(1-\frac{1}{p^{m-i}}\right).$$
\end{lemma}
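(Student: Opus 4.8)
The plan is to count directly. A linear map $\F_p^n \to \F_p^m$ is the same as an ordered $n$-tuple of vectors in $\F_p^m$ (the images of the standard basis vectors), so the total number of maps is $(p^m)^n = p^{mn}$, and under the uniform distribution each tuple is equally likely. Such a map is injective precisely when its columns are linearly independent, so I would count the number of linearly independent $n$-tuples $(v_1,\dots,v_n)$ in $\F_p^m$ and divide by $p^{mn}$.

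First I would build the independent tuples one vector at a time. Having chosen $v_1,\dots,v_i$ linearly independent (for $0 \le i \le n-1$), the span $\langle v_1,\dots,v_i\rangle$ has exactly $p^i$ elements, so the number of admissible choices for $v_{i+1}$ — namely vectors outside this span — is $p^m - p^i$. Multiplying over $i = 0,1,\dots,n-1$ gives the count
$$
\#\{\text{injective maps}\} = \prod_{i=0}^{n-1}\bigl(p^m - p^i\bigr).
$$
Then I would divide by the total $p^{mn} = \prod_{i=0}^{n-1} p^m$, absorbing one factor of $p^m$ into each factor of the product:
$$
\mathrm{Prob} = \prod_{i=0}^{n-1} \frac{p^m - p^i}{p^m} = \prod_{i=0}^{n-1}\Bigl(1 - \frac{1}{p^{m-i}}\Bigr),
$$
which is the claimed formula. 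The hypothesis $1 \le n \le m$ guarantees $m - i \ge m - (n-1) \ge 1 > 0$ throughout, so every factor is a genuine positive probability and the inductive step "there exist vectors outside the span" is non-vacuous.

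There is no real obstacle here: the only point requiring a word of care is the base case $i = 0$, where the "span of the empty tuple" is $\{0\}$, of size $p^0 = 1$, so that the number of choices for $v_1$ is $p^m - 1$, consistent with the formula; and the observation that the uniform measure on maps corresponds to choosing the $n$ columns independently and uniformly from $\F_p^m$, which is what makes the step-by-step count legitimate. This lemma is then applied to the map $\alpha_p$ of \eqref{eq:random-map} with $n = \delta(W)$ and $m = \dim(\overline{W})$.
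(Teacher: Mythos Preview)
Your proof is correct and follows essentially the same counting approach as the paper: both arrive at the count $\prod_{i=0}^{n-1}(p^m-p^i)$ for injective maps and divide by $p^{mn}$. Your argument is in fact slightly more direct, since the paper first counts $n$-dimensional subspaces of $\F_p^m$ and then multiplies by the number of isomorphisms from $\F_p^n$ onto each, a detour that immediately cancels to give the same product.
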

\begin{proof} We want the column space of a matrix representing this random map to be $n$-dimensional. First, we can count the number of $n$-dimensional subspaces of $\F_p^m$. There are 
$$(p^m-1)(p^m-p)\cdots (p^m-p^{n-1})$$ different bases of $\F_p^m$ of size $n$. We then divide by the number of possible bases for a given $n$ dimensional subspace in order to obtain the number of $n$ dimensional subspaces of $\F_p^m$. This number is
$$(p^n-1)(p^n-p)\cdots (p^n-p^{n-1}),$$ giving
$$\frac{(p^m-1)(p^m-p)\cdots (p^m-p^{n-1})}{(p^n-1)(p^n-p)\cdots (p^n-p^{n-1})}$$ total subspaces of $\F_p^m$ of dimension $n$. There are then $(p^n-1)(p^n-p) \cdots (p^n-p^{n-1})$ surjective linear maps from $\F_p^n$ to a given $n$-dimensional subspace of $\F_p^m$. Thus, there are
$$(p^m-1)(p^m-p)\cdots (p^m-p^{n-1})$$ injective linear maps $\F_p^n \to \F_p^m$. There are $p^{nm}$ total linear maps. Thus, the probability that a random map is injective is given by the desired formula.
\end{proof}
Recall the following definition.
\begin{definition} An Artin representation $W_{\C}$ is \emph{totally even} (resp. \emph{totally odd}) if complex conjugation $c \in G$ acts by $+1$ (resp. $-1$). 
\end{definition}
We also classify Artin representations as one of three types:
\begin{lemma} Let $W_{\C}$ be an irreducible Artin representation over $\Q$. Then $W$ is either:
\begin{itemize}
\item[(A)] Totally odd or trivial. In this case $\delta(W) = 0$.
\item[(B)] Totally even but non-trivial. In this case $\delta(W) = \dim(W)$.
\item[(C)] Not of type (A) or (B). In this case, $0<\delta(W) < \dim(W)$. 
\end{itemize}
\end{lemma}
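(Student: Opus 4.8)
The plan is to analyze the three cases according to how complex conjugation $c\in G$ acts on $W_\C$, using the definition $\delta(W)=h^0(G_\R,\overline W)-h^0(G_{\Q,S\cup\{p\}},\overline W)$. The key observation is that both terms are dimensions over $k$ of spaces of $G$-invariants (resp.\ $G_\infty$-invariants) in $\overline W$, and since $p\nmid|G|$ by (Hyp 1), reduction mod $p$ is harmless: $h^0(G_\R,\overline W)=\dim_{\C}W_\C^{c}$ and $h^0(G_{\Q,S\cup\{p\}},\overline W)=\dim_\C W_\C^{G}$. Because $W_\C$ is irreducible, $W_\C^G$ is either all of $W_\C$ (if $W_\C$ is trivial) or zero (otherwise); this is the main structural input.

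First I would dispose of case (A). If $W_\C$ is the trivial representation then $W_\C^{c}=W_\C=W_\C^{G}$, so $\delta(W)=0$. If $W_\C$ is totally odd and nontrivial, then $c$ acts by $-1$, so $W_\C^{c}=0$, while $W_\C^G=0$ by irreducibility and nontriviality; hence $\delta(W)=0-0=0$. Next, case (B): if $W_\C$ is totally even and nontrivial, then $c$ acts as $+1$, so $W_\C^{c}=W_\C$, giving $h^0(G_\R,\overline W)=\dim(W)$, while $W_\C^G=0$ by irreducibility and nontriviality, so $\delta(W)=\dim(W)$. Finally, case (C): if $W_\C$ is neither of type (A) nor (B), then it is nontrivial (so $W_\C^G=0$) and $c$ acts neither as $+1$ nor as $-1$ on all of $W_\C$; thus the $+1$-eigenspace $W_\C^{c}$ is a proper nonzero subspace of $W_\C$ (nonzero because for any nontrivial irreducible representation of a group containing $c$ with $c^2=1$, if $c$ acted by $-1$ entirely it would be totally odd; here it does not, so there is a $+1$-eigenvector, and it is proper since $c$ does not act by $+1$ entirely). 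Hence $0<\delta(W)<\dim(W)$.

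The only point requiring a little care — and the place I expect a referee to push — is the claim in case (C) that the $+1$-eigenspace of $c$ on $W_\C$ is nonzero, i.e.\ that an irreducible representation not of type (A) or (B) genuinely has both eigenvalues of $c$ present. This is immediate once one notes that $c$ has order dividing $2$, so $W_\C=W_\C^{c=+1}\oplus W_\C^{c=-1}$, and "type (C)" means precisely that neither summand is all of $W_\C$; combined with nontriviality (which rules out the trivial representation sitting inside type (A)), both summands are nonzero and proper. I would also remark that $W_\C^G\subseteq W_\C^{c}$ always, which makes $\delta(W)\ge 0$ transparent in every case, and that the reduction to $\C$-dimensions via (Hyp 1) should be stated explicitly since it is what licenses replacing $h^0(\cdots,\overline W)$ by invariants of the characteristic-zero representation.
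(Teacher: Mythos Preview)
Your proof is correct and is precisely the direct verification from the definitions that the paper intends; the paper itself gives no details beyond saying the lemma ``is an easy consequence of the definition of $\delta(W)$ coupled with the previous definition.'' Your explicit remark that (Hyp~1) and (Hyp~4) let one compute $h^0(G_\R,\overline W)$ and $h^0(G_{\Q,S\cup\{p\}},\overline W)$ as the $\C$-dimensions of $W_\C^{c}$ and $W_\C^G$ is a useful clarification the paper leaves implicit.
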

\begin{proof} The lemma is an easy consequence of the definition of $\delta(W)$ coupled with the previous definition.
\end{proof}
We formulate a heuristic:
\begin{heuristic}\label{HeuristicAlpha}
The map $p\mapsto \alpha_p$ behaves like a random map of vector spaces.
\end{heuristic}
With the previous two lemmas and the preceding remarks, we deduce the following:
\begin{proposition} \label{prop:6.5}Let $W_{\C}$ be an irreducible Artin representation over $\Q$. 
\begin{itemize}
\item[(i)] If $W$ is of type (A), then $H^2(G_{\Q,S\cup\{p\}},\overline{W}) = 0$ for all but finitely many primes~$p$.
\item[(ii)] If $W$ is of type (B) and if Heuristic~\ref{HeuristicAlpha} holds, then there are approximately $\log \log X$ primes $p \leq X$ for which $H^2(G_{\Q,S\cup\{p\}},\overline{W}) \neq 0$.
\item[(iii)] If $W$ is of type (C) and if Heuristic~\ref{HeuristicAlpha} holds, then one has $H^2(G_{\Q,S\cup\{p\}},\overline{W}) = 0$ for all but finitely many primes~$p$.
\end{itemize}
\end{proposition}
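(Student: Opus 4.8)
The plan is to derive Proposition~\ref{prop:6.5} from the combinatorial setup already in place: the right exact sequence \eqref{eq:random-map}, the identity $h^2(G_{\Q,S\cup\{p\}},\overline{W})=\dim\ker\alpha_p$, the classification of $W$ into types (A), (B), (C) with the corresponding value of $\delta(W)$, and the probability formula for a random map $\F_p^n\to\F_p^m$ being injective, together with Heuristic~\ref{HeuristicAlpha}. So the proof is really three case analyses, each combining one of these ingredients with a convergence-of-series argument in the style of the Wieferich heuristic of \S4.

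For part (i): if $W$ is of type (A), then $\delta(W)=0$, so in \eqref{eq:random-map} the source of $\alpha_p$ is the zero space $\F_p^0$. The unique map from the zero space is (vacuously) injective, so $\dim\ker\alpha_p=0$, hence $h^2(G_{\Q,S\cup\{p\}},\overline{W})=0$. This holds for every prime $p$ satisfying (Hyp~1)--(Hyp~5), and all but finitely many $p$ satisfy these hypotheses (recall the remark that (Hyp~5) holds once $p\ge\max\{l^{[E:\Q]}\mid l\in S\}$, and (Hyp~1)--(Hyp~4) each exclude only finitely many $p$). No heuristic is needed here — this case is unconditional.

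For parts (ii) and (iii): now $\delta(W)\ge1$, and I apply the random-map lemma with $n=\delta(W)$ and $m=\dim(\overline{W})=[k:\F_p]\dim_\C(W_\C)$. By Heuristic~\ref{HeuristicAlpha} the probability that $h^2\ne0$ at $p$ is $1-\prod_{i=0}^{\delta(W)-1}(1-p^{-(m-i)})$. For type (C) we have $0<\delta(W)<\dim(W)\le m$, so $m-i\ge m-\delta(W)+1\ge2$ for all $i$ in range; thus the ``failure probability'' is $O(p^{-2})$, and $\sum_p p^{-2}<\infty$, so by the heuristic one expects $H^2(G_{\Q,S\cup\{p\}},\overline{W})\ne0$ for only finitely many $p$ — giving (iii). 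For type (B) we have $\delta(W)=\dim(W)$; if additionally $k=\F_p$ (which one may arrange, or one argues on the $i=n-1$ term which contributes a factor $1-p^{-(m-\delta(W)+1)}=1-p^{-1}$ regardless), the dominant contribution to the failure probability is from the term $m-i=m-\delta(W)+1$, and when this equals $1$ the failure probability is $\sim p^{-1}$; since $\sum_{p\le X}p^{-1}\sim\log\log X$, the heuristic predicts approximately $\log\log X$ primes $p\le X$ with $H^2\ne0$ — giving (ii). One should note the possible $[k:\F_p]>1$ subtlety: then $m=[k:\F_p]\dim_\C W_\C$ and $\delta(W)=[k:\F_p]\dim_\C W_\C$ in the type (B) case as well (since $\delta$ is computed over $\F_p$), so $m-\delta(W)+1=1$ still holds and the estimate is unaffected.

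The main obstacle is not in any of these estimates — they are elementary once Heuristic~\ref{HeuristicAlpha} is granted — but rather in the honest scope of the statement: parts (ii) and (iii) are explicitly conditional on the heuristic, and the heuristic itself is the content-free-but-unprovable assertion that the arithmetically-defined maps $\alpha_p$ (coming from the $G$-module structure of units of $E$ modulo $p$-th powers, i.e.\ from the $p$-adic regulator data twisted by $\overline{W}$) are equidistributed like genuinely random matrices. So the ``proof'' of (ii) and (iii) is really a reduction: I reduce the expected behaviour of $h^2$ to the random-matrix model via the exact sequence \eqref{eq:random-map}, and the only non-formal input is deciding that the failure-probability series $\sum_p(1-\prod_{i}(1-p^{-(m-i)}))$ converges (type C, type A trivially) or diverges like $\log\log X$ (type B), which hinges entirely on whether the smallest exponent $m-\delta(W)+1$ exceeds $1$ or equals $1$ — and that in turn is exactly what the type (A)/(B)/(C) trichotomy records.
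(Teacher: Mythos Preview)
Your proof is correct and follows essentially the same approach as the paper's own argument: in each case you identify $h^2=\dim\ker\alpha_p$, read off whether $\delta(W)$ is $0$, equals $\dim(W)$, or lies strictly between, and then invoke the random-map lemma together with the convergence or $\log\log X$-divergence of the resulting probability series. Your additional remarks about the $[k:\F_p]$ factor are a minor elaboration the paper does not make explicit (it effectively works as if $k=\F_p$), but since only the difference $m-\delta(W)$ enters the leading exponent, this has no effect on the conclusion, as you correctly observe.
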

\begin{proof} We need to consider the kernel of the map $\alpha_p\colon\F_p^{\delta(W)} \to \F_p^{\dim(W)}$. In case (i) when $W$ is of type (A), then as $\delta(W) = 0$, we find that $\alpha_p$ is always injective. Thus $H^2(G_{\Q,S\cup\{p\}},\overline{W}) = 0$ for all primes satisfying (Hyp 1)-(Hyp 5). 

In case (ii), when $W$ is of type (B), then $\delta(W) = \dim(W)$, and Lemma 6.2 tells us that the probability that  $\alpha_p$ is \emph{not} injective is $\frac{1}{p}$. Thus, we expect
$$\sum_{p \leq X} \frac{1}{p} \sim \log \log X$$ primes $p \leq X$ for which $H^2(G_{\Q,S\cup\{p\}},\overline{W}) \neq 0$.

Finally, in case (iii), when $W$ is of type (C), then $\delta(W) < \dim(W)$. Lemma 6.2 says that the probability that $\alpha_p$ is not injective is roughly $\frac{1}{p^{\dim(W)-\delta(W)+1}} \leq \frac{1}{p^2}$. Since the sum over all primes $$\sum_{p} \frac{1}{p^2}$$ converges, we expect only finitely many primes for which $H^2(G_{\Q,S\cup\{p\}},\overline{W}) \neq 0$. 
\end{proof}

\subsection{Artin Representations over Arbitrary Number Fields}\label{subsec:ArtinOverNF}
Now suppose $F/\Q$ is a number field, and consider an irreducible Artin representation $W_{\C}$ of $G_F$. Suppose the representation factors through $\Gal(E/F)$, and let $L$ denote the Galois closure over $\Q$ of $E$, and $S$ the set of places of $\Q$ that ramify in $L/\Q$ together with $\infty$ (we shall use $S$ also for the places in $F$, $E$ or $L$ above $S$). We will let $G = \Gal(L/\Q)$, $H = \Gal(L/F)$, $A = \Gal(L/E)$. For convenience, below is the corresponding diagram.
$$\begin{tikzpicture}[description/.style={fill=white,inner sep=2pt}, normal line/.style={->}]
\matrix (m) [matrix of math nodes, row sep=1em,
column sep=2.5em, text height=1.5ex, text depth=0.25ex]
{ L \\ E \\ F \\ \Q \\ };
\path[-] (m-1-1) edge (m-2-1);
\path[-] (m-1-1) edge[bend left = 60] node[right] {$A$} (m-2-1);
\path[-] (m-3-1) edge (m-4-1);
\path[-] (m-1-1) edge[bend right=90] node[left]{$G$} (m-4-1);
\path[-] (m-1-1) edge[bend right=30] node[left]{$H$} (m-3-1);
\end{tikzpicture} $$
We will choose a prime $p$ satisfying (Hyp 1) - (Hyp 5) for $L/\Q$ as in the previous section. By Shapiro's lemma, we have that
$$H^i(G_{F,S\cup\{p\}},M) = H^i(G_{\Q,S\cup\{p\}},\Ind_F^{\Q} M),$$ where $M$ is any $G_{F,S\cup\{p\}}$-module. We may write
$$\Ind_F^{\Q} W = \bigoplus U^{\mu_U},$$ where the various $U$ are irreducible representations of $G_{\Q}$. Note that the $U$'s will be representations of $G$. We make the following definition.
\begin{definition} 
\begin{itemize}
\item[(1)] We say that $W$ is of type (B) if at least one of the $U$ with $\mu_U > 0$ is of type (B). 
\item[(2)] We say that $W$ is of type (A) if every $U$ with $\mu_U>0$ is of type (A).
\item[(3)] Otherwise, $W$ is of type (C).
\end{itemize}
\end{definition}
Let $U$ be a representation with $\mu_U > 0$. By Frobenius reciprocity,
$$\Hom_H (W,U|_H) = \Hom_G(\Ind_F^{\Q} W,U) \neq 0.$$ Since $W$ is irreducible by hypothesis, this means that $W \subset U|_H$ for every $U$ with $\mu_U>0$. 
\begin{lemma} \label{lem:6.7}
\begin{itemize}
\item[(1)] $\dim_{\C}(W) > 1$. Then the representation $W$ is of type (B) if and only if $W$ is the restriction of a totally even irreducible representation of $G_{F^+}$, where $F^+ \subset F$ is totally real.
\item[(2)] If $\dim_{\C}(W) = 1$, then $W$ is of type (B) if and only if the maximal totally real subfield $F^+$ of $F$ is not $\Q$.
\end{itemize}
\end{lemma}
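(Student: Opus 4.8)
The plan is to unwind the definition of "type (B)" for $W$ over $F$ in terms of the representation-theoretic notion over $\bbQ$ introduced in Lemma~\ref{lem:6.7}'s preamble, and then use Frobenius reciprocity together with the description of totally even representations via complex conjugation. The key point established just before the lemma is that $W\subset U|_H$ for every irreducible constituent $U$ of $\Ind_F^\bbQ W$ with $\mu_U>0$. So I would first record what "type (B)" means on the $U$-side: $U$ is totally even but non-trivial, i.e.\ the image of complex conjugation $c\in G$ acts as $+1$ on $U$ (for some/any choice of complex place, these are conjugate). I would then translate this into a statement about all the complex conjugations in $G$ that lie in $H$: these correspond precisely to the real places of $F$, and $W$ being "the restriction of a totally even representation of $F^+$" should be rephrased as saying $W$ is fixed by every complex conjugation in $H$, i.e.\ $W$ factors through a quotient in which the decomposition groups at all archimedean places of $F$ act trivially — equivalently $W$ descends to $F^+$ and is totally even there.

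For part (1), the forward direction: suppose $W$ is of type (B), so some $U$ with $\mu_U>0$ is totally even non-trivial. Since $W\subset U|_H$ and the complex conjugations in $H$ are (a subset of) those in $G$, each such $c$ acts trivially on $U$, hence trivially on $W$. Therefore $W$ is trivial on the subgroup of $H$ generated by all complex conjugations; the fixed field of that subgroup inside $L$, intersected appropriately, gives a totally real field $F^+\subset F$ through which $W$ descends, and the descended representation is totally even by construction. (Here I would be careful that "descends to $F^+$" requires $W$ to be trivial on $\Gal(L/F^+\cdot(\text{something}))$ — I expect the correct statement is that the subgroup of $H$ generated by complex conjugations is normal in $H$, with quotient $\Gal(F^+/\bbQ)$-related, and one uses that $W$ is irreducible to see it genuinely comes from $F^+$.) Conversely, if $W$ is the restriction to $G_F$ of a totally even irreducible $W^+$ of $G_{F^+}$ with $F^+$ totally real, then $\Ind_F^\bbQ W = \Ind_{F^+}^\bbQ(\Ind_F^{F^+}W)$ contains $\Ind_{F^+}^\bbQ W^+$, and since $F^+$ is totally real and $W^+$ totally even, every irreducible constituent $U$ of $\Ind_{F^+}^\bbQ W^+$ has all complex conjugations acting by $+1$, so $U$ is totally even; as $\dim W>1$ forces $W^+$, hence $U$, to be non-trivial, at least one such $U$ is totally even non-trivial, i.e.\ type (B). One must check such a $U$ actually has $\mu_U>0$ as a constituent of $\Ind_F^\bbQ W$, which follows since $\Ind_{F^+}^\bbQ W^+$ is a summand of $\Ind_F^\bbQ W$.

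For part (2), when $\dim_\bbC W=1$, the representation $W$ is a character, and $\Ind_F^\bbQ W$ is a sum of characters of $G$ composed with the transfer/restriction; type (B) means some constituent is a non-trivial totally even character. A character constituent $U$ of $\Ind_F^\bbQ \mathbf{1}$ (the case $W=\mathbf{1}$, which governs the general case after twisting by the finite-order character $W$ — or more carefully, one reduces to analyzing which characters of $G$ occur) is totally even exactly when it is trivial on all complex conjugations in $G$; it is non-trivial and totally even iff it cuts out a non-trivial totally real subextension of $\bbQ$ inside $L$ lying under $F$. Such a subextension exists iff the maximal totally real subfield $F^+$ of $F$ strictly contains $\bbQ$. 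I would make this precise by noting $\Hom(G/\overline{\langle c\text{'s in }H\rangle}, \bbC^\times)$ nontrivial $\iff F^+\ne \bbQ$, and checking that the relevant characters genuinely appear in $\Ind_F^\bbQ W$ with positive multiplicity (Frobenius reciprocity again, using that $W|_{H}$ contains the trivial character of the conjugation-subgroup precisely when $W$ is "even at all real places of $F$", which for a character means it is trivial there).

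\textbf{Main obstacle.} I expect the bookkeeping around "$W$ descends to the maximal totally real subfield $F^+$" to be the delicate part: one must identify the subgroup of $H=\Gal(L/F)$ generated by all complex conjugations, show its fixed field is exactly $L^{\langle c\rangle}$ with $F\cap(\text{max totally real of }L) = F^+$, confirm normality so that the quotient is a genuine Galois group, and then invoke irreducibility of $W$ to conclude the descent is to $F^+$ and not merely to some intermediate field — together with the matching claim that the totally even constituents one produces really occur in $\Ind_F^\bbQ W$ with positive multiplicity rather than cancelling. The complex-conjugation/archimedean-places translation and the Frobenius reciprocity steps are routine; it is this Galois-theoretic descent that needs care.
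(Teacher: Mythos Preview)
Your converse in (1) and your treatment of (2) are essentially along the paper's lines. The forward direction of (1), however, has a genuine gap.

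You argue: since $U$ is totally even and $W\subset U|_H$, every complex conjugation in $H$ acts trivially on $W$, so $W$ is trivial on the subgroup $N\subset H$ generated by such conjugations, and ``the fixed field of $N$, intersected appropriately, gives a totally real $F^+\subset F$ through which $W$ descends.'' But this mixes up two different constructions. Knowing $N\subset\ker W$ tells you $W$ \emph{factors through the quotient} $H/N$; the corresponding fixed field $L^N$ lies \emph{above} $F$, not below it. What you need is the opposite: an \emph{extension} of $W$ from $H=\Gal(L/F)$ to the larger group $\Gal(L/F^+)$ for some totally real $F^+\subset F$. Triviality on complex conjugations in $H$ gives no such extension, and the normality issue you flag as the ``main obstacle'' is a red herring --- the real obstacle is that you are going in the wrong direction on the Galois lattice.

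The paper's argument avoids this entirely. It fixes $F^+$ as the maximal totally real subfield of $F$ from the outset, then restricts $U$ to $G_{F^+}$ and picks an irreducible summand $U^+$ with $W\subset U^+|_H$ (such a summand exists because $U|_H=\bigoplus_i U_i^+|_H$ and $W$ is irreducible). The crux is to show $U^+|_H$ is itself irreducible, so that $W=U^+|_H$. Since $U^+$ is totally even and $F^+$ is totally real, the splitting field $L^+$ of $U^+$ is totally real; hence $L^+\cap F$ is totally real and contained in $F$, so $L^+\cap F=F^+$ by maximality. This disjointness gives $\Gal(L^+F/F)\cong\Gal(L^+/F^+)$, so restriction to $G_F$ does not shrink the image of $U^+$, and $U^+|_H$ stays irreducible. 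That disjointness step is the missing idea in your outline.
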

\begin{proof} First we prove (1). Suppose $W$ is the restriction of a totally even representation $U^+$ over a totally real field $F^+$. Then $\Ind_F^{\Q} W$ contains $\Ind_{F^+}^{\Q} U^+$, which is totally even and non-trivial unless $F^+ = \Q$ and $U^+ = \C$, which cannot happen since $\dim_{\C}(W) > 1$. \medskip

Conversely, suppose $W$ is of type (B). Then there exists a totally even representation $U$ of $G_{\Q}$ such that $W \subset U|_H$. Let $F^+$ denote the maximal totally real subfield of $F$. By Frobenius reciprocity, $W$ is also a subset of $U^+|_H$, where $U^+$ is an irreducible summand of $U$ restricted to $G_{F+}$. The claim is that $W = U^+|_H$. Equivalently, that $U^+|_H$ is irreducible. Since $U^+$ is totally even, the action of $G_{F^+}$ on $U^+$ factors through a finite extension $L^+/F^+$, where $L^+$ is totally real. On the other hand, $L^+$ and $F$ are totally disjoint over $F^+$, because the intersection is totally real and would therefore be contained in $F^+$ by the maximality of $F^+$. Hence, $\Gal(L^+F/F) = \Gal(L^+/F^+)$, and so the action of $G_{F}$ on the restriction $U^+|_H$ factors through the same image, and hence $U^+|_H$ is irreducible, as desired. \medskip

Statement (2) is easy. Indeed, if $W \cong \C$ and $F$ contains a totally real subfield $F^+$ which is not $\Q$, then $\Ind_F^{\Q} \C$ contains $\Ind_{F^+}^{\Q} \C$, which is nontrivial. Conversely, if $\Ind_F^{\Q} \C$ contains a nontrivial, totally even representation, the action of Galois factors through a nontrivial, totally real quotient $\Gal(F^+/\Q)$. 
\end{proof}
Lemma~\ref{lem:6.7} and Proposition~\ref{prop:6.5} lead to the following expectation:
\begin{itemize}
\item[(1)] If $W = \C$, then $H^2(G_{F,S\cup\{p\}},\overline{W}) \neq 0$ for infinitely many primes $p$ if and only if $F$ contains a totally real subfield $F^+ \neq \Q$.
\item[(2)] If $\dim_{\C}(W) > 1$, then $H^2(G_{F,S\cup\{p\}},\overline{W}) \neq 0$ for infinitely many primes $p$ if and only if $W$ extends to a totally even irreducible representation over a totally real subfield $F^+ \subset F$.
\end{itemize}

\subsection{An Example of type (C)} \label{ExampleC} Let $K/\Q$ be an imaginary cubic field with Galois closure $E/\Q$ and $\Gal(E/\Q) \cong S_3$. Let $$\rho: G_{\Q} \to \GL_2(\C)$$ be the corresponding weight $1$ representation, with underlying representation space $W_\C$ and with $\det(\rho) = \chi$ the corresponding quadratic character. Suppose $\sigma,\tau \in G_{\Q}$ generate $\Gal(E/\Q)$ when restricted to $E$. We will assume $\rho$ has the following form:
$$\rho(\sigma) = \MatTwo{\omega}{0}{0}{\omega^2},\quad \rho(\tau) = \MatTwo{0}{1}{1}{0},$$ where $\omega \in \C$ is a (primitive) cube root of unity. Let $p>3$ be a prime which is unramified in $E$ and coprime to the class number $h_E$. Then (Hyp 1) to (Hyp 4) are satisfied. { Depending on $E$ there will also be a small finite list of primes $p$ that need to be excluded for (Hyp 5) to hold. Since $G=S_3$, it is not hard to see that this latter list is contained in 
\begin{equation}\label{eq:DefH5}
H_5:=\{p\mid p\hbox{ is a prime number and $p$ divides $l^2-1$ for some }l\in S\}.
\end{equation}}

Denote by $\bar\rho$, $\bar\chi$ and $\overline W$ the reductions mod $p$ of $\rho$, $\chi$ and $W$ formed in the sense of Subsection~\ref{subsec:artin-reps-over-Q}. Using standard results on induction, and that $\rho$ is induced from a character of order $3$ of the fixed field of $\ker\chi$, one verifies that $\Ad^0(\bar\rho)\cong\overline W\oplus\bar\chi$. Concretely $\overline W$ is realized inside $\Ad^0(\bar{\rho})$ as submodule of matrices of the form $\MatTwo{0}{b}{c}{0}$ under the adjoint action of $G_{\Q,S\cup\{p\}}$. Using (\ref{eq:random-map}) one computes
$$\dim H^1(G_{\Q,S\cup\{p\}},\bar{\chi}) = 1,\quad \dim H^1(G_{\Q,S\cup\{p\}},\overline W) = 1 \text{ or } 2.$$ 
{In the case at hand $\delta(\overline W)=1$, $\dim \overline W=2$ and hence arguing as after equation (\ref{eq:random-map}), one has $h^2(G_{\Q,S\cup\{p\}},\overline W)=0$ if and only if $h^1(G_{\Q,S\cup\{p\}},\overline W)=1$. We have tested this numerically in some cases, under the additional hypothesis that $\bar{\rho}(\Frob_p)$ has order $3$. This latter condition is useful in Subsection~\ref{ExampleCrefined} where a refinement of the present analysis is given.

\medskip

We now explain how to make $h^1(G_{\Q,S\cup\{p\}},\overline W)$ computationally accessible. Applying $\Hom_G(\cdot,\overline W)$ to (\ref{SES-WtOne2}) yields
\[
\xymatrix{
0\ar[r]& H^1(G_{\Q,S\cup\{p\}},\overline W)\ar[r]&
\Hom_G(\cO_E/p\cO_E,\overline W) \ar[r]& \Hom_G(\cO_E^\times ,\overline W).
}
\]
From (\ref{eq:FromBoston}) we deduce $\dim_{\F_p} \Hom_G(\cO_E/p\cO_E,\overline W) =2$ and $\dim_{\F_p}\Hom_G(\cO_E^\times ,\overline W)=1$. Since $p$ is inert in $K/\Q$ the completion $\cO_{K_p}$ of $\cO_K$ at $p$ carries a natural action of $G_p$, and via this identification there is a natural action of $G_p$ on $\cO_K/p\cO_K$, and by (\ref{eq:FromBoston}) one has $\cO_K/p\cO_K\cong\F_p[G_p]$. This yields $\cO_E/p\cO_E\cong\Ind_{G_p}^G\cO_K/p\cO_K$ as $\F_p[G]$-modules; there are two places of $E$ above $p$ and the respective local fields are isomorphic to $K_p=\cO_{K_p}[\frac1p]$. This gives
\begin{equation}\label{eq:Shapiro}
\xymatrix{\Hom_G(\cO_E/p\cO_E,\overline W) \ar[r]^\simeq& \Hom_{G_p}(\cO_K/p\cO_K,\overline W)} .
\end{equation}
To understand the isomorphism more explicitly, let $k_p$ be the residue field of $\cO_{K_p}$. Then $\cO_E/p\cO_E=k_p\times k_p$ and $\tau\in G\cong S_3$ interchanges the two factors. One verifies from the definition of induction that if $\psi\colon k_p\to \overline W$ is an $\F_p[G_p]$-homomorphism, then 
\begin{equation}\label{eq:IndRep}
\big(\Ind_{G_p}^G\psi\big)(a,b)=\psi(a)+\tau\psi(b) \qquad\hbox{for }(a,b)\in k_p\times k_p. 
\end{equation}

We also have the natural inclusion $\cO_K^\times\to\cO_E^\times$. Since $E$ contains no primitive $p$-th root of unity and since $K$ is the fixed field in $E$ under $\tau\in S_3$ via the $G$-action on $\cO_E^\times$, the restriction map
\begin{equation}\label{eq:UnitsEtoK}
\xymatrix{\Hom_G(\cO_E^\times ,\overline W) \ar[r]& \Hom(\cO_K^\times,\overline W^\tau)}
\end{equation}
into the $\tau$-invariants of $\overline W$ is an isomorphism, as well. This gives a short exact sequence
\begin{equation}\label{eq:MainMap}
\xymatrix{
0\ar[r]& H^1(G_{\Q,S\cup\{p\}},\overline W)\ar[r]&
\Hom_{G_p}(\cO_K/p\cO_K,\overline W) \ar[r]^-{\alpha_p^\vee}&  \Hom(\cO_K^\times,\overline W^\tau).
}
\end{equation}
To identify the map on the right, denote by $\epsilon_K\in\cO_K^\times$ a fundamental unit. Then $\epsilon_K^{p^3-1}$ lies in $1+p\cO_K$ and so we can write 
\begin{equation}\label{eq:DefZandEps}
\epsilon_K^{p^3-1}=1+zp \mod p^2\cO_K \qquad\hbox{for a unique }z\in \cO_K/p\cO_K=k_p. 
\end{equation}
Now embedding $\cO_K^\times$ into $\cO_E^\times$ and then passing to $\cO_E/p\cO_E=k_p\times k_p$, one finds that $\eps_K$ is mapped to the diagonal element $(z,z)$. Take on the other hand $\psi\in\Hom_{G_p}(k_p,\overline W)$. Then to describe the map we are interested in, we have to evaluate $\Ind_{G_p}^G\psi$ at $(z,z)$. By (\ref{eq:IndRep}) it follows that $\alpha_p^\vee(\psi)\in \Hom(\cO_K^\times,\overline W^\tau)$ is the unique map characterized by
\begin{equation}\label{eq:EquationOfMainMap}
\epsilon_K\mapsto (1+\tau)\psi(z) .
\end{equation}
Now $\alpha_p^\vee$ is the zero map if for all $\psi\in \Hom_{G_p}(k_p,\overline W)$ we have $(1+\tau)\psi(z)=0$. 
\begin{lemma}\label{lem:zEqZero}
$\alpha_p^\vee=0$ if and only if $z=0$.
\end{lemma}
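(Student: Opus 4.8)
The plan is to establish the two implications of the lemma. The implication ``$z=0\Rightarrow\alpha_p^\vee=0$'' is immediate: by \eqref{eq:EquationOfMainMap}, if $z=0$ then $\alpha_p^\vee(\psi)$ sends $\epsilon_K\mapsto(1+\tau)\psi(0)=0$ for every $\psi$, and since $K$ is imaginary cubic its unit group is $\{\pm1\}\times\epsilon_K^{\Z}$ with $\{\pm1\}$ killed in any homomorphism to the $\F_p$-vector space $\overline W^\tau$ ($p$ odd), so $\alpha_p^\vee(\psi)=0$ for all $\psi$. For the converse I would argue contrapositively: assuming $z\neq0$, produce $\psi\in\Hom_{G_p}(k_p,\overline W)$ with $(1+\tau)\psi(z)\neq0$. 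Since $\ker(1+\tau\colon\overline W\to\overline W)=\overline W^{\tau=-1}$, it suffices to show that the $\F_p$-subspace $V_z:=\{\psi(z)\mid\psi\in\Hom_{G_p}(k_p,\overline W)\}$ of $\overline W$ is not contained in $\overline W^{\tau=-1}$.

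The first and main step is to extract from ``$\epsilon_K$ is a global unit'' that $z$ lies in the part of $k_p$ away from the trivial character. Since $p$ is inert in $K$, the local field $K_p/\Q_p$ is unramified cyclic of degree $3$ with group $G_p=\langle\sigma\rangle$ and $N_{K_p/\Q_p}(x)=\prod_{i=0}^2\sigma^i(x)$. From $N_{K/\Q}(\epsilon_K)\in\{\pm1\}$ and $\epsilon_K^{p^3-1}=1+pz'$ with $z'\in\cO_{K_p}$ reducing to $z$, expanding $1=N_{K_p/\Q_p}(\epsilon_K^{p^3-1})\equiv1+p\,\mathrm{Tr}_{K_p/\Q_p}(z')\pmod{p^2}$ yields $\mathrm{Tr}_{k_p/\F_p}(z)=0$. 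Now $\F_p[G_p]$ is semisimple ($p\neq3$), say $\F_p[G_p]\cong\F_p\times B$ with $\F_p$ the trivial-character factor and $\dim_{\F_p}B=2$; via the normal basis isomorphism $\cO_K/p\cO_K=k_p\cong\F_p[G_p]$ (cf.\ \eqref{eq:FromBoston}) this gives $k_p=k_p^{G_p}\oplus k_p^B$ with $k_p^B$ free of rank one over $B$, and since $\mathrm{Tr}_{k_p/\F_p}$ is a nonzero multiple of the projection onto $k_p^{G_p}$ we get $k_p^B=\ker\mathrm{Tr}_{k_p/\F_p}$. Hence $z\in k_p^B$ and $z\neq0$.

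The second step identifies $V_z$. The module $\overline W|_{G_p}$ has $\aFp$-constituents the two nontrivial characters $\overline\omega,\overline\omega^2$, so $\overline W$ is a faithful $B$-module; being of $\F_p$-dimension $2=\dim_{\F_p}B$ it is free of rank one over $B$. Since $\overline W^{G_p}=0$ we get $\Hom_{G_p}(k_p,\overline W)=\Hom_B(k_p^B,\overline W)$, and after fixing $B$-isomorphisms $k_p^B\cong B\cong\overline W$ this becomes $\End_B(B)=B$ acting by multiplication; transporting $z$ to an element $\tilde z\in B$, the subspace $V_z$ corresponds to the ideal $\tilde z B\subseteq B$, which is nonzero exactly because $z\neq0$. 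Thus $V_z$ is a nonzero $G_p$-stable $\F_p$-subspace of $\overline W$. Finally, $\overline W^{\tau=-1}$ is a $\tau$-stable line (here $p\neq2$ and $\tau$ acts on $\overline W$ with trace $0$), and it is not $\sigma$-stable: over $\aFp$ the $\sigma$-stable lines in $\overline W$ are exactly the $\overline\omega$- and $\overline\omega^2$-eigenlines, which $\tau$ interchanges because $\tau\sigma\tau^{-1}=\sigma^{-1}$ in $S_3$. A line that is not $\sigma$-stable contains no nonzero $G_p$-stable subspace, so $V_z\not\subseteq\overline W^{\tau=-1}$, and some $\psi$ has $(1+\tau)\psi(z)\neq0$; hence $\alpha_p^\vee\neq0$.

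The step I expect to be the crux is the norm computation giving $\mathrm{Tr}_{k_p/\F_p}(z)=0$: it is precisely this global input that makes the equivalence correct, since a nonzero $z$ concentrated in the trivial-character component of $k_p$ would be annihilated by every $G_p$-equivariant map into $\overline W$ and would contradict the lemma. Once $z$ is pinned down in $k_p^B$, the remaining assertions are routine semisimple representation theory of $S_3$ in characteristic $p>3$.
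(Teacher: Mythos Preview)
Your proof is correct and takes a genuinely different route from the paper's. The paper argues by cases on $p\bmod 3$: in each case it shows directly that if $z\notin\F_p$ then one can choose $\psi$ with $(1+\tau)\psi(z)\neq0$, forcing $z\in\F_p$; it then invokes the $G$-module structure of $\cO_E^\times/(\cO_E^\times)^p$ from \eqref{eq:FromBoston} to conclude that $z\in\F_p$ forces $z=0$. You instead extract the global constraint up front via the norm computation $\mathrm{Tr}_{k_p/\F_p}(z)=0$, which places $z$ in the non-trivial isotypic piece $k_p^B$ from the outset; the remaining argument is then uniform in $p$ and purely about $\F_p[G_p]$-modules, showing that the $G_p$-stable subspace $V_z=B\tilde z\subset\overline W$ cannot sit inside the non-$\sigma$-stable line $\overline W^{\tau=-1}$. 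Your approach is more self-contained (it avoids citing \eqref{eq:FromBoston}) and eliminates the case split; the paper's approach stays closer to the ambient $G$-module framework already set up in the section. The two global inputs are in fact the same statement read two ways: $\mathrm{Tr}(z)=0$ says $z$ has no $\F_p$-component, while the paper's use of \eqref{eq:FromBoston} says the image of $\epsilon_K$ lands in a $G$-submodule of $\cO_E/p\cO_E$ with trivial $G_p$-invariants, whence $z\in\F_p\Rightarrow z=0$.
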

\begin{proof}
One direction is obvious. For the other direction observe first that $k_p\cong \F_p\oplus \overline W$ as $\F_p[G_p]$-modules, where indeed $\F_p$ is the prime subfield $\F_p$ of $k_p$. We distinguish two cases.

\medskip

If $p\equiv 2\mod 3$, then $\overline W$ is irreducible and $\Aut_{G_p}(\overline W)\cong \F_{p^2}^\times$. Assume $z\in k_p\setminus \F_p$. Using the transitive action of $\Aut_{G_p}(\overline W)$ one can find $\psi\in \Hom_{G_p}(k_p,\overline W)$ with $0\neq \psi(z)$ and $\psi(z)$ in the $1$-dimensional subspace $\overline W{}^{\tau=-1}$. Then $(1+\tau)\psi(z)=2\psi(z)\neq0$, which violates $\alpha_p^\vee=0$. Thus $z\in\F_p$. Now it follows from the first isomorphism in (\ref{eq:FromBoston}) that~$z=0$.

\medskip

If $p\equiv1\pmod3$, then $W=U\oplus U'$ for one-dimensional irreducible representations of $G_p$: the element $\sigma\in G_p$ acts as multiplication by a primitive third root of unity, and both choices occur. Suppose $U$ is the submodule of matrices of the form $\MatTwo{0}{b}{0}{0}$ under the adjoint action and let $\psi=\id_U\times 0_{U'}$. Then $(1+\tau)\psi$ of any such matrix is $\MatTwo{0}{b}{b}{0}$. This shows that $z$ must have component $0$ in $U$. An analogous argument shows that $z$ has component $0$ in $U'$, and hence we find $z\in\F_p$. As in the previous case this implies~$z=0$.
\end{proof}

We checked the condition $z=0$ for imaginary cubic fields whose discriminant $\Delta$ lies in the range $-140 \leq \Delta < 0$;\footnote{We refer to \cite[App.~B.3 and B.4]{Cohen} for extensive tables of complex cubic and real cubic number fields.} hence 
\[\Delta\in\{-23,-31,-44,-59,-76,-83,-87,-104,-107,-108,-116,-135,-139,-140\}.\]
We looked specifically at primes $p$ such that $3<p \leq 10^8$ with $\bar{\rho}(\Frob_p)$ of order $3$. Our computation (realized in Magma code) give the following result:
\begin{proposition}\label{prop:TestForB}
Let $\Delta$ be in the above list. Suppose that $p$ is in the above range, that $p$ does not divide $\Delta$ (and is hence unramified in $E/\Q$) and that $p$ is not in the list $H_5$ from  (\ref{eq:DefH5}) for $S=\{l\mid l\hbox{ is prime and }l|\Delta\}$, so that $p$ satisfies (Hyp 1) to (Hyp 5). Then 
\[h^2(G_{\Q,S\cup\{p\}},\Ad^0(\bar\rho))=0.\]
\end{proposition}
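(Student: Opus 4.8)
The plan is to split $\Ad^0(\bar\rho)$ into its two $G_{\Q,S\cup\{p\}}$-stable summands, dispose of the character summand abstractly, and reduce the remaining piece to the explicit condition $z\neq 0$ of Lemma~\ref{lem:zEqZero}, which is then checked by a finite computation. From the isomorphism $\Ad^0(\bar\rho)\cong\overline W\oplus\bar\chi$ recorded above one gets
\[
h^2(G_{\Q,S\cup\{p\}},\Ad^0(\bar\rho))=h^2(G_{\Q,S\cup\{p\}},\overline W)+h^2(G_{\Q,S\cup\{p\}},\bar\chi).
\]
Since $\rho$ is odd, $\bar\chi$ is a non-trivial totally odd character, hence of type (A) with $\delta(\bar\chi)=0$; by Proposition~\ref{prop:6.5}(i) (equivalently, because the map $\alpha_p$ of (\ref{eq:random-map}) attached to $\bar\chi$ has source $\F_p^{0}$ and is therefore injective), we obtain $h^2(G_{\Q,S\cup\{p\}},\bar\chi)=0$ for every prime satisfying (Hyp 1)--(Hyp 5). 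This is exactly where the assumptions $p\nmid\Delta$ and $p\notin H_5$ are needed.

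It remains to show $h^2(G_{\Q,S\cup\{p\}},\overline W)=0$, and here I would use that in the present situation $\delta(\overline W)=1$ and $\dim\overline W=2$. Thus, as noted after (\ref{eq:random-map}), $h^2(G_{\Q,S\cup\{p\}},\overline W)=\dim\ker\alpha_p$ vanishes if and only if $h^1(G_{\Q,S\cup\{p\}},\overline W)=1$, and by the short exact sequence (\ref{eq:MainMap}), together with $\dim_{\F_p}\Hom_{G_p}(\cO_K/p\cO_K,\overline W)=2$ and $\dim_{\F_p}\Hom(\cO_K^\times,\overline W^\tau)=1$, the latter is equivalent to $\alpha_p^\vee\neq 0$. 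Finally Lemma~\ref{lem:zEqZero} identifies this with $z\neq 0$, where $z\in k_p=\cO_K/p\cO_K$ is defined by $\epsilon_K^{p^3-1}\equiv 1+zp\pmod{p^2\cO_K}$ for a fixed fundamental unit $\epsilon_K$ of $K$; recall that this whole chain of identifications relies on $p$ being inert in $K$, which holds because $p$ lies in the stated range, i.e. $\bar\rho(\Frob_p)$ has order $3$. Hence the proposition reduces to the single assertion that $\epsilon_K^{p^3-1}\not\equiv 1\pmod{p^2\cO_K}$ for every $\Delta$ in the list and every admissible prime $p\le 10^8$.

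The last step is the verification of this non-congruence. For each of the fourteen fields one fixes a fundamental unit $\epsilon_K$, enumerates the primes $3<p\le 10^8$ that are inert in $K$ and avoid the prime divisors of $\Delta$ and the finite set $H_5$, and computes $\epsilon_K^{p^3-1}$ in the finite local ring $\cO_K/p^2\cO_K\cong(\Z/p^2\Z)[x]/(g(x))$ by fast exponentiation, confirming in each case that the result differs from $1$. I expect the only genuine obstacle to be computational rather than conceptual: the exponents $p^3-1$ have size about $10^{24}$ and there are on the order of $10^7$ relevant pairs $(\Delta,p)$, so the work lies in implementing the arithmetic of $\cO_K/p^2\cO_K$ efficiently (here realized in Magma). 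No mathematical input beyond the exact sequences (\ref{eq:random-map}), (\ref{eq:MainMap}) and Lemma~\ref{lem:zEqZero} is required: the content of the proposition is precisely the outcome of this finite search.
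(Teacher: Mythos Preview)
Your argument is correct and follows essentially the same route as the paper: decompose $\Ad^0(\bar\rho)\cong\overline W\oplus\bar\chi$, dispose of the odd character $\bar\chi$ via the type~(A) argument (which the paper leaves implicit), and reduce the $\overline W$-piece via (\ref{eq:MainMap}) and Lemma~\ref{lem:zEqZero} to the computational check $\epsilon_K^{p^3-1}\not\equiv 1\pmod{p^2\cO_K}$, which is exactly the Magma verification the paper reports. Your write-up is in fact slightly more explicit than the paper's about why the $\bar\chi$-summand costs nothing.
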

For completeness, we list the sets $H_5\setminus\{2,3\}$. 
\begin{table}[htb]
\begin{center}
\setlength{\tabcolsep}{0.15em}
\begin{tabular}{c|c|c|c|c|c|c|c|c|c|c|c|c|c|c|c|c|c|}
$\Delta$ & $-23$& $-31$ & $-44$ &$-59$  & $-76$ &$-83$ &$-87$ & $-104$ & $-107$ &$-108$ &$-116$ &$-135$ &$-139$&-140 \\
\hline
$H_5$ &$\{11\}$&$\{5\}$&$\{5\}$&$\{5,29\}$&$\{5\}$&$\{7,41\}$&$\{5,7\}$&$\{7\}$&$\{53\}$&$\emptyset$&$\{5,7\}$&$\emptyset$&$\{5,7,23\}$&$\emptyset$\\
\end{tabular}
\end{center}\caption{}\label{MinusOne}
\end{table}
}

It might be instructive to also consider analogous examples where $K$ is a real cubic field, so that one is in case (B). The computations would be somewhat more involved since here the units of $K$ have rank  $2$ and the computation of the map $\alpha_p$ from (\ref{eq:random-map}) is more demanding. In case (B) the conjecture predicts that for any discriminant $\Delta$ one has infinitely many exceptional $p$, where however the size of these $p$ grows doubly exponentially. So it is not clear how visible this is in numerical experiments.

\section{Weight one $p$-adic forms} \label{subsec:WeightOne}
We now ask  a related question about $p$-adic weight one forms which addresses ramification at $p$
in mod $p$ representations coming from Galois representations attached to $p$-adic weight one forms.   (In the earlier section we considered ramification at $q \neq p$ mainly of mod $p$ represenations arising from weight 2 forms.) Consider the space $S_{1,p\textrm{-adic}}(\Gamma_1(N),\Q_p)^\ord$ of ordinary overconvergent $p$-adic weight one forms. Let $\rho$ be a representation which arises from a classical weight $1$ form $f$ of level $N$, and denote by $S_{1,p\textrm{-adic}}(\Gamma_1(N),\Q_p)^\ord_{\rho\!\!\mod p}$ the subspace of $S_{1,p\textrm{-adic}}(\Gamma_1(N),\Q_p)^\ord$ of forms congruent to $f$.  Suppose $\rho$ has nebentype $\eps$. Let $\chi_p$ be the Teichm\"uller lift of the mod $p$ cyclotomic character at $p$. By Hida theory and since any mod $p$ modular form of weight strictly larger than $1$ lifts to characteristic zero with the same level, one sees that $\rho\mod p$ also arises from the classical spaces of forms $S_p(\Gamma_1(N),\Q_p)$, as well as $S_2(\Gamma_1(N)\cap\Gamma_0(p),\chi^{-1}_p\eps,\Q_p)$ -- overconvergent ordinary forms for the above two levels and nebentypes are classical for weight $k\ge2$. 

Consider  primes $p$ such that $\rho_p({\rm Frob}_p)$ has distinct eigenvalues $\alpha_p,\beta_p$, i.e., $\rho_p$ is distingushed at $p$. For such $p$ we localize the space of mod $p$ modular forms at the maximal ideal $\sf m_\alpha$ (resp. $\sf m_\beta$) which gives rise to $\rho$ and contains $U_p-\alpha_p$ (resp. $U_p-\beta_p$). We investigate the following question:

\begin{question} For primes $p$ at which $\rho_p$ is distinguished,  is one of  $\dim S_{p}(\Gamma_1(N),\Z_p)_{\sf m_\alpha}$ or  $\dim S_{p}(\Gamma_1(N),\Z_p)_{\sf m_\beta}$ of $\Z_p$-rank one for almost all primes p?
\end{question}

An affirmative answer to this question implies that the  ordinary deformation ring (for a choice of $\alpha_p,\beta_p$) of ordinary deformations of $\rho_p$ is smooth and thus implies using thePoitou-Tate sequence  that $H^2(S \cup\{p\},{\rm Ad}(\rho_p))=0$ for almost all primes $p$. Thus this question is a refinement of the question we studied in the previous section.

In a similar spirit we could ask:
\begin{question}\label{Que3} 
Is $\dim S_{1,p\textrm{-adic}}(\Gamma_1(N),\Q_p)^\ord_{\rho\!\!\mod p}$ bounded as $p$ varies?
\end{question}

 Hence one can reformulate Question~\ref{Que3} as: 

\begin{question}\label{Que4}  Is the following dimension bounded as $p$ varies\\
\[\dim S_{p}(\Gamma_1(N),\Q_p)^\ord_{\rho\!\!\!\mod p}=\dim S_{2}(\Gamma_1(N)\cap\Gamma_0(p),\chi^{-1}_p\eps,\Q_p)^\ord_{\rho\!\!\!\mod p} ?\]
\end{question}
\medskip
 
A somewhat more general question is the following:
\begin{question} Is the subspace of $\dim S_{p}(\Gamma_1(N),\Q_p)^\ord$ spanned by eigenforms whose mod $p$ Galois representation is unramified at $p$ of bounded dimension as $p$ varies?
\end{question}

{ \subsection{Computational data via modular forms}\label{subsec:ModFormsComp}
In this subsection, we explain how we investigated Question~\ref{Que4} numerically via computations with modular forms in one particular example. In Subsection~\ref{ExampleCrefined} we shall explain a more efficient method due to F. Calegari and apply it to all examples from Subsection~\ref{ExampleC}.}
\smallskip

Let $p=23$ and let $f$ be the unique dihedral weight $1$ form of level 23. Attached to $f$ there is a complex odd $2$-dimensional Galois representation $\rho\colon G_\Q\to\GL_2(\C)$ with image isomorphic to the dihedral group $S_3$. The fixed field $E=\overline\Q\,{}^{\ker\rho}$ is in fact the Hilbert class field of the quadratic extension $\Q(\sqrt{-23})$. In particular, $\rho$ is only ramified at $23$. Note also that $\rho$ is the Teichm\"uller lift of the mod $23$ reduction $\bar\rho_{\Delta,23}$ of the $23$-adic Galois representation attached to Ramanujan's $\Delta$-function. The last observation can be used to efficiently compute the coefficients $a_l(f)$ for primes $l$ that are not too large -- certainly for all $l$ that occurred in our sampling. Lifting the coefficients is simple since $a_l\in\{-1,0,2\}$ for $l\neq 23$ and $a_{23}(f)=1$. 

\medskip

Using Magma, we first computed the intersection in $S_p:=S_{2}(\Gamma_1(N)\cap\Gamma_0(p),\chi^{-1}_p\eps,\F_p)^\ord$ of the kernels of $(T_l-a_l(f))^2$ mod $p$ where $l$ ranges over the $5$ primes in $\{5,\ldots,17\}$, excluding $p$, for the $139$ primes $p$ from $3$ to~$809$. The computation revealed 
\[
 \dim S_{2}(\Gamma_1(N)\cap\Gamma_0(p),\chi^{-1}_p\eps,\F_p)^\ord_{\rho\!\!\mod p} =2, 
\]
for all primes $p$ in this range for which $\rho(\Frob_p)$ was non-scalar, i.e., not the identity, with the single exception $p=13$. For $p=13$ and for those primes for which $\rho(\Frob_p)$ is scalar, the multiplicity was at least $3$. In fact, for $p=13$ we obtained $\dim (S_p)_{\rho\mod p}=3$, for those $p$ such that $\rho(\Frob_p)$ was trivial, we found $\dim (S_p)_{\rho\mod p}=4$ in all cases.

\smallskip 
Experimentally, we also observed that $\dim S_p$ roughly grows like $2p$ and that the difference $d_p:=\dim S_{2}(\Gamma_1(N)\cap\Gamma_0(p),\chi^{-1}_p\eps,\Q_p)-\dim S_p$ appears to be bounded. In fact, R. Pollack pointed out to G.B., that in all cases we considered the difference $d_p$ is due to the presence of CM forms in $S_p$. All other forms were ordinary. In particular, the boundedness asked for in Question~\ref{Que3} is non-trivial. \smallskip

Let us first recall what the theory tells us: Let $p>2$ be a prime. If $\rho(\Frob_p)$ is non-scalar, then each of the two eigenvalues of $\rho(\Frob_p)$ is a possible $U_p$ eigenvalue for the Hecke action on $S_p$ (by  Hida theory). Now by multiplicity one, due to Mazur, Mazur-Ribet, Gross, Edixhoven and finally Buzzard -- for references see \cite[Rem.~4.9]{CG} --, it follows that the subspace of $S_p$ defined as 
\[S'_p:=\bigcap_{l\neq 23,p} \ker(T_l-a_l(f))\cap \ker (U_p^2-a_p(f)U_p+\eps(p))\] 
has dimension exactly $2$. If on the other hand $\rho(\Frob_p)$ is scalar, then it follows by multiplicity two proved in \cite[Thm.~4.8]{CG} and anticipated in \cite{Wiese}, where multiplicity at least two was proved, that the subspace $S'_p$ has again dimension $2$. The results just quoted also explain why in our experiment we compute the annihilator of $\ffrm^2$ and not that of $\ffrm$, where $\ffrm$ denotes the maximal ideal of the Hecke algebra of $S_p$ corresponding to $\rho$. \medskip

We now provide a heuristic for the above findings for those primes $p$ at which $\rho$ is unramified and at which $\rho(\Frob_p)$ is non-scalar. Similar to Subsection~\ref{subsec:WeightTwo}, we have no heuristic for the other $p$. Our heuristic explanation of the numerical results given above is a combination of the predictions from Section~\ref{sec:Artin} and an adaption of the heuristic from Subsection~\ref{subsec:WeightTwo}: First, from Proposition~\ref{prop:6.5}(iii), assuming Heuristic~\ref{HeuristicAlpha}, we deduce that
\[ H^2(G_{S(23)\cup\{p\}},\Ad^0(\rho\!\!\mod p))=0\qquad\hbox{for almost all primes }p.\]
By the Euler characteristic formula in Galois cohomology, it follows for almost all primes $p$ that $\dim H^1(G_{S(23)\cup\{p\}},\Ad^0(\rho\!\!\mod p))=2$. In the example we investigated, Proposition~\ref{prop:TestForB} yields $\dim H^1(G_{S(23)\cup\{p\}},\Ad^0(\rho\!\!\mod p))=2$ for all primes $p$ with $3<p \leq 10^8$.

\medskip

Before we continue, let us first recall some local computations of cohomology and Selmer groups: If $p>3$ and if $\rho(\Frob_p)$ is non-scalar, then one easily shows from local duality that $H^2(G_{\Q_p},\Ad^0(\rho\mod p))=0$; it follows that $\dim H^1(G_{\Q_p},\Ad^0(\rho\mod p))=4$. The explicit formulas given in \cite[Lemma~5]{Ramakrishna-FM} show $\dim H^1_\ord(G_{\Q_p},\Ad^0(\rho\mod p))=2$. We now formulate the an analog of Heuristic~\ref{Heuristic-RibetsQ}:
\begin{heuristic}\label{Heuristic-WeightOne}
The sum of the restriction map
\[ r_q\colon H^1(G_{S(23)\cup\{p\}},\Ad^0(\rho\!\!\mod p)) \longrightarrow  H^1(G_{\Q_p},\Ad^0(\bar{\rho}))\]
and the inclusion
\[\iota_p^\ord\colon H^1_\ord(G_{\Q_p},\Ad^0(\rho\mod p)) \longrightarrow H^1(G_{\Q_p},\Ad^0(\bar{\rho})) \]
is random in $p$ as a map of $\F_p$ vector spaces $\F_p^4\to\F_p^4$ subject to the hypothesis that $r_p$ and $\iota_p$ are injective.
\end{heuristic}
Now if $\rho\mod p$ is $p$-distinguished, i.e., if $(\rho\mod p)(\Frob_p)$ has two distinct eigenvalues, then Hida theory gives two weight $2$ form congruent mod $p$ to $\rho$ whose $U_p$ mod $p$ eigenvalues are the two eigenvalues of $(\rho\mod p)(\Frob_p)$. For each of these we consider $\iota^\ord_p\oplus r_p$. The kernel of this map is the mod $p$ Selmer group for ordinary deformations of $\rho\mod p$. Hence this map is injective if and only of the corresponding universal ordinary deformation ring is a quotient of $\Z_p$. Since the ring has a characteristic zero point, it then must be equal to $\Z_p$. Thus the injectivity applies to both weight $2$ forms if and only if $\dim (S_p)_{\rho\!\!\mod p}=2$. Now chances that $\iota_p^\ord\oplus r_p$ is injective under the condition that the individual maps are injective corresponds to choosing two times $2$ linearly independent vectors in $\F_p$ and asking whether the combined $4$ chosen vectors are linearly independent. The probability for this~is
\[\frac{(p^4-p^3)(p^4-p^2)}{(p^4-1)(p^4-p)}=\frac{p^4}{(p^2+p+1)(p^2+1)} \approx1-\frac1p,\] 
where on the left we already cancelled the contribution of one choice of $2$ linearly independent vectors. Arguing as in the proof of Proposition~\ref{prop:6.5}, this gives the following conclusion:
\begin{proposition}\label{prop:PredictionWtOne}
Assuming Heuristics~\ref{HeuristicAlpha} and \ref{Heuristic-WeightOne}, 
\begin{enumerate}
\item for a prime $p>3$ for which $\rho(\Frob_p)$ is non-scalar, the probability for the event
\[\dim (S_p)_{\rho\!\!\!\!\mod p}>2\]
is $1/p$
\item there are approximately $\log \log X$ primes $p \leq X$ for which (1) holds.
\end{enumerate}
\end{proposition}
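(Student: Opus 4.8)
The plan is to assemble assertion (1) from three inputs: the generic vanishing of $H^2$ for $\Ad^0(\rho\!\!\mod p)$ furnished by Section~\ref{sec:Artin}, the translation of $\dim(S_p)_{\rho\!\!\mod p}$ into the injectivity of a pair of maps built from ordinary Selmer groups, and the elementary injectivity count for random maps $\F_p^4\to\F_p^4$ carried out above; assertion (2) will then follow from Mertens' theorem. Throughout write $\Ad^0:=\Ad^0(\rho\!\!\mod p)$. For the global input: $\rho$ is the dihedral weight-one representation of level $23$, so by Subsection~\ref{ExampleC} there is a decomposition $\Ad^0\cong\overline W\oplus\bar\chi$ with $\overline W$ of type (C) and $\bar\chi$ (the reduction of the quadratic, hence odd, character $\det\rho$) of type (A). Proposition~\ref{prop:6.5}(i) gives $H^2(G_{S(23)\cup\{p\}},\bar\chi)=0$ for all but finitely many $p$ unconditionally, and Proposition~\ref{prop:6.5}(iii), under Heuristic~\ref{HeuristicAlpha}, gives the same for $\overline W$; hence $H^2(G_{S(23)\cup\{p\}},\Ad^0)=0$ outside a finite set of primes, and for every such $p$ the Euler--Poincar\'e formula (\ref{eq:2}) applied to $\Ad^0$ forces $\dim H^1(G_{S(23)\cup\{p\}},\Ad^0)=2$. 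The finitely many primes discarded here do not affect the $\log\log X$ asymptotics.

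Next I would set up the local picture at $p$, for $p>3$ with $\rho(\Frob_p)$ non-scalar (equivalently, $p$-distinguished, since $\rho(\Frob_p)$ has finite order). Local Tate duality gives $H^2(G_{\Q_p},\Ad^0)=0$, hence $\dim H^1(G_{\Q_p},\Ad^0)=4$, and \cite[Lemma~5]{Ramakrishna-FM} gives $\dim H^1_\ord(G_{\Q_p},\Ad^0)=2$ for each of the two ordinary lines attached to the eigenvalues $\alpha_p,\beta_p$ of $\rho(\Frob_p)$. For each such choice, the kernel of
\[
\iota_p^\ord\oplus r_p\colon H^1_\ord(G_{\Q_p},\Ad^0)\oplus H^1(G_{S(23)\cup\{p\}},\Ad^0)\longrightarrow H^1(G_{\Q_p},\Ad^0)
\]
is the mod $p$ Selmer group of ordinary deformations of $\rho\!\!\mod p$, so this map is injective exactly when the associated universal ordinary deformation ring is a quotient of $\Z_p$; since that ring has a characteristic-zero point (e.g. the one from a weight-$2$ Hida family through $f$, or from $\rho$ itself, which is unramified and $p$-distinguished at $p$), it is then $\Z_p$. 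By Hida theory and multiplicity one (references collected in \cite[Rem.~4.9]{CG}), $\dim(S_p)_{\rho\!\!\mod p}=2$ if and only if this injectivity holds for both $\alpha_p$ and $\beta_p$, and $\dim(S_p)_{\rho\!\!\mod p}>2$ otherwise.

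Finally I would apply Heuristic~\ref{Heuristic-WeightOne}: fixing bases that make $r_p$ and $\iota_p^\ord$ injective, the combined map $\F_p^4\to\F_p^4$ is modelled as uniformly random subject to its two input images being $2$-dimensional, so, after cancelling the normalization of one fixed pair of independent vectors, its injectivity probability is
\[
\frac{(p^4-p^3)(p^4-p^2)}{(p^4-1)(p^4-p)}=\frac{p^4}{(p^2+p+1)(p^2+1)}=1-\frac1p+O\!\left(\frac1{p^2}\right).
\]
Hence the probability that $\dim(S_p)_{\rho\!\!\mod p}>2$ is $\tfrac1p+O(p^{-2})$, which is (1), and summing over primes with $\sum_{p\le X}\tfrac1p\sim\log\log X$ yields (2).

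I expect the one genuinely delicate point in the write-up to be the end of the local step: making precise how the two deformation problems attached to $\alpha_p$ and $\beta_p$ combine into the \emph{single} random map $\F_p^4\to\F_p^4$ postulated by Heuristic~\ref{Heuristic-WeightOne} (rather than into two correlated copies, which would alter the constant), and confirming that the $O(p^{-2})$ term together with the finitely many primes discarded in the global and local steps contribute nothing to the $\log\log X$ count. The remaining pieces (the local cohomology dimensions and the counting lemma) are routine.
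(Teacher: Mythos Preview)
Your proposal is correct and follows essentially the same route as the paper: first use Proposition~\ref{prop:6.5} (together with the decomposition $\Ad^0\cong\overline W\oplus\bar\chi$) to get $H^2=0$ and hence $\dim H^1=2$, then translate $\dim(S_p)_{\rho\!\!\mod p}=2$ into the injectivity of $\iota_p^\ord\oplus r_p$ via ordinary deformation rings, and finally invoke Heuristic~\ref{Heuristic-WeightOne} and the count $\frac{(p^4-p^3)(p^4-p^2)}{(p^4-1)(p^4-p)}\approx1-\tfrac1p$. Your flagged ``delicate point'' about combining the two ordinary lines attached to $\alpha_p,\beta_p$ is a fair caveat that the paper also leaves implicit; the effect is only in the constant in front of $1/p$ and does not disturb the $\log\log X$ conclusion.
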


\begin{remark}
Let us compare the above to the case where $f$ is a cuspidal Hecke eigenform of weight $k\ge2$ (and new at level $N$) with corresponding maximal ideal $\ffrm$. In this case there can be only finitely many primes $l$ of $\Z$ for which there is a cuspidal Hecke eigenform $g\in S_k(\Gamma_0(N))$ such that $f$ and $g$ are congruent modulo $l$. This is well-known and follows from the fact that the Hecke algebra $\T$ over $\Z$ for $S_k(\Gamma_0(N))$ is an order in the finite product of number fields $\T\otimes_\Z\Q$ and hence for $l\gg0$ the completion of $\T$ at $l$ is a product of discrete valuation rings unramified over~$\Z_l$. Hence in this case, there are only finitely many primes $p$ where $\dim S_k(\Gamma_0(N))_{\ffrm}$ is larger than the expected value~$1$. The basic distinction between weights $k=1$ and $k \geq 2$ is that by Hida's theorem all forms in the $p$-adic space of cusp forms $S_k(\Gamma_1(N),\Q_p)^{ord}$ are classical for $k \geq 2$, unlike the case of $k=1$.
\end{remark}

{ \subsection{Computational data via class field theory} \label{ExampleCrefined} 
We now give  a method due to F. Calegari to detect when 
\[ \dim S_{p}(\Gamma_1(N),\Q_p)^\ord_{\rho\!\!\!\mod p} > 2\]
for not only the representation $\rho$ considered in Subsection~\ref{subsec:ModFormsComp} but all $\rho$ considered in Subsection~\ref{ExampleC}.
\smallskip

So we resume the setup of Subsection~\ref{ExampleC}; so $W$ is a vector space over $\C$ of dimension $2$, $\rho\colon G_\Q\to \Aut(W)$ is a representation with image $S_3$ whose splitting field $E$ is the Galois closure of a complex cubic field $K$, and $\chi=\det\rho$. Throughout, $p$ will be a prime that satisfies (Hyp 1) to (Hyp 5) from Subsection~\ref{ExampleC} and such that $\bar{\rho}(\Frob_p)$ has order $3$, so that $\chi(\Frob_p) = 1$ and that $p$ is inert in $\cO_K$. For the reductions of $W$ and $\chi$ mod $p$ we write $\overline W$ and $\bar \chi$. We assume further that $h^1(G_{\Q,S\cup\{p\}},\overline W) = 1$, or equivalently, as observed in Subsection~\ref{ExampleC}, that $h^2(G_{\Q,S\cup\{p\}},\overline W) = 0$. Recall also that $\Ad^0(\rho)\cong W\oplus \chi$. 

\medskip

We now study whether the ordinary weight $2$ subspace $H^1_\ord(G_{\Q,S\cup\{p\}},\Ad^0(\bar\rho))$ of the two-dimensional $F_p$ vector space $H^1(G_{\Q,S\cup\{p\}},\Ad^0(\bar\rho))$ vanishes. For this to make sense over $\F_p$, we assume that $p\equiv 1\mod 3$. Then $\overline W|_{G_p}$ is a direct sum of two non-isomorphic $\F_p[G_p]$-submoduls $U,U'$ on which $\Frob_p$ acts with order three. Let $U_0\subset \Ad^0(\bar\rho)\subset\End_{\F_p}(\overline W)$ be either the subspace mapping $U$ to $0$ and $U'$ to $U$, or that mapping $U'$ to $0$ and $U$ to $U'$. Set
\[H^1_\ord(G_{\Q_p},\Ad^0(\bar\rho))=\ker \big( H^1(G_{\Q_p},\Ad^0(\bar\rho)) \to H^1(I_p,\Ad^0(\bar\rho)/U_0)^{G_{\Q_p}}\big),\]
and define $H^1_\ord(G_{\Q,S\cup\{p\}},\Ad^0(\bar\rho))$ as the subspace of classes in $H^1(G_{\Q,S\cup\{p\}},\Ad^0(\bar\rho))$ whose restriction to $H^1(G_{\Q_p,},\Ad^0(\bar\rho))$ lie in $H^1_\ord(G_{\Q_p},\Ad^0(\bar\rho))$; cf.~\cite[p.~124ff.]{Ramakrishna-FM}. 

\begin{remark} If $p \equiv 2 \Mod 3$ and $\Frob_p$ has order $3$, then the eigenvalues of $\bar{\rho}(\Frob_p)$ will not be defined over $\F_p$. It follows that the ordinary subspace is not a pair of lines in $H^1(G_{\Q_p},\Ad^0(\bar\rho)(\bar{\rho}))$ but rather a pair of conjugate lines in $H^1(G_{\Q_p},\Ad^0(\bar{\rho})\otimes \F_{p^2})$. In particular, the vanishing of $H^2(G_{\Q,S\cup\{p\}},\Ad^0(\bar{\rho}))$ implies that the ordinary weight one deformation ring will be $\Z_{p^2}$ (for deformations to $\Z_{p^2}$-algebras).
\end{remark}

Under the identification $\Ad^0(\bar\rho)=\overline W\oplus\bar\chi$, the subspace $U_0$ is identified with a subrepresentation of $\overline W$, namely $U'$ in the first case and $U$ in the second. I.e., the two ordinary lines in $\Ad^0(\bar\rho)$ correspond to the two eigenspaces of $\overline W$ under the action of $G_p$. We set 
\[H^1_\ord(G_{\Q_p},\overline W)=\ker \big( H^1(G_{\Q_p},\overline W) \to H^1(I_p,\overline W/U_0)^{G_{\Q_p}}\big),\]
and define $H^1_\ord(G_{\Q,S\cup\{p\}},\overline W)$ as the kernel of $H^1(G_{\Q,S\cup\{p\}},\overline W)\to H^1(I_p,\overline W/U_0)$ under iterated restriction. We need to understand the vanishing of $H^1_\ord(G_{\Q,S\cup\{p\}},\overline W)$. Using inflation-restriction as in Subsection~\ref{ExampleC} and local class field theory, we have isomorphisms 
\[ H^1(G_{\Q_p},\overline W)\cong (\Hom(G_{E_\frp},\F_p)\otimes \overline W)^{G_p}\cong (\Hom(E^\times_\frp,\F_p)\otimes \overline W)^{G_p} \] 
for $\frp$ one of the two places of $E$ above $p$, and we note that under our hypotheses we may identify $E_\frp$ with $K_p$. Note also that the kernel of $\Hom(E^\times_\frp,\F_p)\to \Hom(\cO_{E_\frp}^\times,\F_p)$ is the subspace of unramified classes in $H^1(G_{\Q_p},\F_p)$. Using the local-global compatibility of class field theory, we can extend the sequence (\ref{SES-WtOne}) to a commutative diagram with exact rows
\begin{equation}\nonumber
\xymatrix@C-.6pc{
\cO_E^\times\otimes_\Z\F_p \ar[r]\ar[d]&  \cO_{E_p}^\times\otimes_\Z\F_p \ar[r]\ar@{=}[d]& \ar[d] H^1(G_{E,\{p,\infty\}},\F_p)^{\vee} \ar[r]& 0&\\
0\ar[r]&  \bigoplus_{\frp|p}  \cO_{E_\frp}^\times\otimes_\Z\F_p \ar[r]& \bigoplus_{\frp|p} H^1(G_{E_{\frp}},\F_p)^{\vee} \ar[r]& \bigoplus_{\frp|p} H^1_\unr (G_{E_{\frp}},\F_p)^{\vee} \ar[r]& 0
}
\end{equation}
We dualize (over $\F_p$) and apply $\Hom_G(\cdot ,\overline W)$. Using the identification from (\ref{eq:Shapiro}) and the definition of $H^1_\ord(G_{\Q_p},\overline W)$, one obtains the following commutative diagram with exact rows and columns
\begin{equation}\nonumber
\xymatrix@C-1pc @R-.5pc {
&&&0\ar[d]&\\
&&&\Hom_{G_p}(\cO_{K_p}^\times,U_0)\ar[d]\ar@{-->}[r]&\Hom_{G_p}(\cO_{K}^\times,\overline W^\tau) \\
&0\ar[r]&H^1(G_{\Q,S\cup\{p\}},\overline W)\ar[r]\ar@{=}[d]&\Hom_{G}(\cO_{E_p}^\times,\overline W)\ar[d]\ar[r]&\Hom_{G}(\cO_{E}^\times,\overline W)\ar[u]^\simeq_{(\ref{eq:UnitsEtoK})}\\
0\ar[r]&H^1_\ord(G_{\Q,S\cup\{p\}},\overline W)\ar[r]&H^1(G_{\Q,S\cup\{p\}},\overline W)\ar[r]^\beta&\ar[d]\Hom_{G_p}(\cO_{K_p}^\times,\overline W/U_0)\\
&&&0&\\
}
\end{equation}
From the diagram we deduce that $H^1_\ord(G_{\Q,S\cup\{p\}},\overline W)=0$ if and only if $\beta$ is injective, and the latter is equivalent to the dashed map being injective. This map is the map $\alpha_p^\vee$ from (\ref{eq:MainMap}) restricted to $\Hom_{G_p}(\cO_{K_p}^\times,U_0)$. Let $\epsilon_K$ and $z$ be as in (\ref{eq:DefZandEps}). Arguing as in the proof of Lemma~\ref{lem:zEqZero} one shows:
\begin{lemma}
Suppose that $h^1(G_{\Q,S\cup\{p\}},\overline W)=1$, i.e, that $z\neq0$. Then $\alpha_p^\vee$ restricted to $\Hom_{G_p}(\cO_{K_p}^\times,U_0)$ vanishes if and only if $z$ is an eigenvalue under the action of $\Frob_p$ (with eigenvalue necessarily a primitive third root of unity of $\F_p$). 
\end{lemma}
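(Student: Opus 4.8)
The plan is to follow the template of the proof of Lemma~\ref{lem:zEqZero}, now exploiting the $\Frob_p$-eigenspace decompositions that $p\equiv 1\bmod 3$ makes available over $\F_p$. First I would unwind the hypotheses into a concrete picture at $p$. Since $p$ is inert in $K$ and unramified in $E$, the decomposition group $G_p\subset G\cong S_3$ is the cyclic group of order $3$ generated by $\Frob_p$. Because $p\equiv 1\bmod 3$, the field $\F_p$ contains the primitive third roots of unity, so $\overline W|_{G_p}=U\oplus U'$ is the sum of the two $\Frob_p$-eigenlines, say with eigenvalues $\zeta$ and $\zeta^2$, and by the normal basis theorem $\cO_K/p\cO_K=k_p\cong \F_p\oplus k_p[\zeta]\oplus k_p[\zeta^2]$ as $\F_p[G_p]$-modules. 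Write $z=z_0+z_\zeta+z_{\zeta^2}$ for the corresponding decomposition of the class $z$ from (\ref{eq:DefZandEps}), and let $\lambda\in\{\zeta,\zeta^2\}$ be the $\Frob_p$-eigenvalue on $U_0$. Identifying $\Hom_{G_p}(\cO_{K_p}^\times,U_0)$ with $\Hom_{G_p}(k_p,U_0)$ via the $p$-adic logarithm as in Subsection~\ref{ExampleC}, any such homomorphism is supported on $k_p[\lambda]$, where it is a scalar multiple of a fixed isomorphism onto $U_0$; in particular $\Hom_{G_p}(k_p,U_0)$ is one-dimensional and $\psi(z)=\psi(z_\lambda)$ for every $\psi$ in it.

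Next I would invoke the explicit formula (\ref{eq:EquationOfMainMap}) for $\alpha_p^\vee$. After identifying $\Hom(\cO_K^\times,\overline W^\tau)$ with $\overline W^\tau$ by evaluation at $\epsilon_K$, the map sends $\psi$ to $(1+\tau)\psi(z)=(1+\tau)\psi(z_\lambda)$, so the restriction of $\alpha_p^\vee$ to $\Hom_{G_p}(k_p,U_0)$ vanishes precisely when $(1+\tau)\psi(z_\lambda)=0$ for a generator $\psi$, i.e. when $\psi(z_\lambda)\in\overline W^{\tau=-1}$. The key transversality is $U_0\cap\overline W^{\tau=-1}=0$: since $\tau$ conjugates $\Frob_p$ to $\Frob_p^{-1}$ it interchanges $U$ and $U'$, hence acts on $\overline W$ as a swap of the two eigenlines whose $(-1)$-eigenline meets neither $U$ nor $U'$. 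As $\psi(z_\lambda)\in U_0$, this forces $\psi(z_\lambda)=0$, and because $\psi$ is injective on $k_p[\lambda]$ that is equivalent to $z_\lambda=0$. Thus the restricted $\alpha_p^\vee$ vanishes if and only if $z_\lambda=0$.

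It remains to translate ``$z_\lambda=0$'' into ``$z$ is a $\Frob_p$-eigenvector''. For this I would use two facts. First, $z$ has no $\Frob_p$-invariant component, i.e. $z_0=0$; this can be checked directly by writing $\epsilon_K^{p^3-1}=1+pw$ and taking the norm from $K_p$ to $\Q_p$: since $N_{K/\Q}(\epsilon_K)=\pm1$ one gets $\mathrm{Tr}_{k_p/\F_p}(z)=0$, hence $z_0=\tfrac13\mathrm{Tr}_{k_p/\F_p}(z)=0$ (this is the analogue, in the present more refined situation, of the appeal to the first isomorphism of (\ref{eq:FromBoston}) at the end of the proof of Lemma~\ref{lem:zEqZero}). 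Second, the hypothesis $h^1(G_{\Q,S\cup\{p\}},\overline W)=1$ forces $z\neq 0$. Combining these, $z_\lambda=0$ is equivalent to $z=z_{\lambda'}\neq 0$ with $\lambda'$ the other primitive third root of unity, i.e. to $z$ being a nonzero $\Frob_p$-eigenvector, necessarily with eigenvalue a primitive third root of unity and, for the given $U_0$, equal to the eigenvalue on $\overline W/U_0$. This is the asserted equivalence.

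I do not anticipate a real obstacle: the argument is bookkeeping with the $\Frob_p$-eigenspace decompositions, running parallel to the proof of Lemma~\ref{lem:zEqZero}. The only step demanding a little care --- the exact analogue of the final step there --- is the vanishing of the $\Frob_p$-invariant part $z_0$ of $z$; the remaining ingredients (one-dimensionality of $\Hom_{G_p}(k_p,U_0)$, the transversality $U_0\cap\overline W^{\tau=-1}=0$, and passing through (\ref{eq:EquationOfMainMap})) are formal.
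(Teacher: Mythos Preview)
Your proposal is correct and follows precisely the route the paper indicates (``arguing as in the proof of Lemma~\ref{lem:zEqZero}''): you carry out the $p\equiv 1\bmod 3$ eigenspace analysis of that proof, now restricted to $\Hom_{G_p}(k_p,U_0)$, use the transversality $U_0\cap\overline W^{\tau=-1}=0$ in place of the explicit matrix computation there, and supply the analogue of the final ``$z\in\F_p\Rightarrow z=0$'' step via the norm identity $\mathrm{Tr}_{k_p/\F_p}(z)=0$. Your added precision---that the eigenvalue of $z$ must be that of $\overline W/U_0$---is a welcome sharpening of the paper's statement and is consistent with the subsequent numerical criterion $z^{3(p-1)}\equiv 1$, which is the disjunction over the two choices of~$U_0$.
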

Now $\Frob_p z=\lambda z$ for $\lambda\in\F_p$ a primitive third root of unity is equivalent to $z^{3(p-1)}=1$.} Hence we need to examine when 
\[\left(\frac{\epsilon_K^{p^3-1}-1}{p} \right)^{3(p-1)} \equiv 1 \Mod p.\]

\medskip

We checked this equivalence for imaginary cubic fields {  whose discriminant $\Delta$ lies in the range} $-140 \leq \Delta < 0$. We looked at the same primes $p$ with $3<p<10^8$ with $\bar{\rho}(\Frob_p)$ of order $3$ as in Proposition~\ref{prop:TestForB}, omitting those with $p\equiv 2 \Mod 3$. The results are shown in Table~\ref{TableOne}. We omitted the prime $7$ for $\Delta=-139$ since it lies in $H_5$ given in Table~\ref{MinusOne}.  
\begin{table}[htb]
\begin{center}
\begin{tabular}{c|c}
$\Delta$ & $p$ \\
\hline
$-23$ & $13$ \\
$-31$ & $7$, $2467$ \\
$-44$ & \\
$-59$ & $19$\\
$-76$ & $125743$\\
$-83$ & $7$, $31$ \\
$-87$ & $181$ \\
$-104$ & $12697$ \\
$-107$ & $13$ \\
$-108$ & $3511$ \\
$-116$ &  \\
$-135$ & \\
$-139$ & $31$ \\
$-140$ & \\
\end{tabular}
\end{center}\caption{}\label{TableOne}
\end{table} \medskip

\section{Almost multiplicity one mod $p$ and level raising at primes $q \neq p$}

In this section we deal with ramification  at primes away from $p$    of mod $p$ representations arising from mod $p$ newforms. Thus the question is different from the questions about unobstructedness we investigated earlier, but the guiding heuristic is  very similar, and is also related to the earlier section where we considered ramification at $p$ of mod $p$ representations arising from weight one $p$-adic forms.\medskip

\subsection{Two guiding expectations}
Let us begin by explaining some general expectations before we investigate in more detail two concrete cases. Denote by $S_k(N,p)$ the space of weight $k$ modular forms of level $\Gamma_0(N)$ with coefficients in $\overline{\bbF_p}$. We consider it as a Hecke module for the tame level $N$ Hecke algebra generated over $\overline{\F_p}$ by the Hecke operators $T_p$ where $p$ ranges through all primes not dividing $N$. We semisimplify the action of the Hecke algebra, since this need not hold in characteristic $p$. To any Hecke eigenform $f\in S_k(N,p)$ one can attach a continuous two-dimensional semisimple Galois representation $\bar\rho_f\colon G_{\Q,S(N)\cup\{p\}}\to \GL_2(\overline\F_p)$ that is completely characterized by $\bar\rho_f(\Frob_p)=a_p(f)$ for all primes $p$ not dividing $N$, where $a_p(f)$ is the eigenvalue of $f$ under $T_p$ and $S(N)$ is the set of places of $\Q$ dividing $N$ or~$\infty$.\medskip

The first expectation is that the vast majority of the representations $\bar\rho_f$ is irreducible and that the reducible ones asymptotically have density zero. For simplicity we call such $f$ irreducible and the others reducible. We do not attempt to quantify this expectation. An important consequence of this expectation is that to the vast majority of forms of $S_k(N,p)$ one can apply the level lowering and level raising results of Ribet et al; see \cite{Ribet}.\medskip

To formulate the second expectation, note first that because of old forms, the space $S_k(N,p)$ cannot satisfy a multiplicity one hypothesis; i.e., the generalized Hecke eigenspaces may have multiplicity larger than one. However even if one takes this into account, multiplicity one will fail in general: the theory of level raising and level lowering predicts under suitable hypothesis congruences between old and new forms.\medskip

The second expectation is that the multiplicity of any generalized Hecke eigenspace of $S_k(N,p)$ for  which $\bar\rho_f$ is irreducible, is up to a small error, the minimal dimension predicted by the theory of new and old forms combined with the theory of congruences for level changing of mod $p$ forms of Ribet et al. \medskip

An important consequence of the above two expectations is that ramification  should be abundant.  There is surprisingly little one can prove  towards this. One expects a Murphy's law for ramification in that it should occur (plentifully!)  whenever its allowed to occur. This is hard to prove in  the presence of unramified forms $g\in S_k(N',p)$ of level $N'$ strictly dividing $N$ which conjecturally should inhibit ramification but only in statistically negligible ways! For instance  for any conductor $N$ or weight $k$ not too small, there should typically be a modular mod $p$ Galois representation $\bar\rho\colon G_{S(N)\cup\{p\}}\to\GL_2(\overline{\F_p})$ of weight $k$ which is not finite at $p$ and ramified at all primes dividing $N$. The argument under the above expectation is as follows: By the first expectation almost all eigenforms $f$ in $S_k(N,p)$ are irreducible. Now if $\bar\rho_f$ is finite at $p$ ore unramified at some divisor of $N$, then by level lowering results $\bar\rho_f=\bar\rho_g$ for some Hecke eigenform $g\in S_k(N',p)$ of level $N'$ strictly dividing $N$. To conclude, it is now necessary that the dimension of all irreducible forms in $S_k(N,p)$ that admit level lowering is strictly less than the dimension of $S_k(N,p)$. This typically follows from the second expectation.\medskip

The following simpler question seems already hard to answer. Fix primes $p$ and $l$, that are possibly equal, then is it true that for all but finitely many primes  $q$  there is an irreducible form $f$ in $S_2(lq,p)$ such that $\bar\rho_f$ is ramified at $q$ and not finite at $l$ (i.e., tr\`es ramifi\'ee when $l=p$ and simply ramified  at $l$ when $l \neq p$)? Note that we insist, because of the results of the next paragraph, that the representation be not finite at~$l$.\medskip

There is one easy case precisely in the situation that we do not have to contend with unramified  forms. Namely, if   $N$ is a prime $q$, it is easy to show (by a local argument)  that if $q$ is not $\pm 1 $ mod $p$, any representation arising from $S_2(q,p)$ is irreducible,  and  then it is ramified at $q$ by level lowering results of Ribet. (In fact, as Romyar Sharifi indicated to us, a global argument shows that if $q$ is not $1$ mod $p>3$ then any representation arising from $S_2(q,p)$ is irreducible.) In accordance with the above expectations, one should ask if for a fixed $p>3$ and varying $q$ the Hecke module of forms in $S_2(q,p)$ is almost multiplicity free in the sense that the number of mod $p$ eigenforms arising from it is roughly its dimension. \medskip

In the following two subsections we shall explain two related numerical experiments that rely in parts on a (well-known) deformation theoretic interpretation of level raising.

\subsection{Weight 2 Forms}\label{subsec:WeightTwo}
 Let $\bar{\rho}: G_{\Q} \to \GL_2(k)$ be an absolutely irreducible representation, where $k$ is a finite field of characteristic $p$.  We assume throughout this subsection that $\Ad^0(\rhobar)$ is also irreduicble as the case of dihedral $\rhobar$ has a different flavor which we treat  separately. Suppose $\bar{\rho}$ arises from $S_2(\Gamma_0(N))$, corresponding to a maximal ideal $\I{m}$ of the Hecke algebra acting on this space. By the \v{C}ebotarev density theorem, the set $X$ of primes $q$ such that $\bar{\rho}(\Frob_q)$ has eigenvalues with ratio $q$ has positive density. Let $q \in X$, and let $\I{m}'$ denote the maximal ideal of the Hecke algebra acting on $S_2(\Gamma_0(Nq))$, which is the same as $\I{m}$ away from $q$, and such that $U_q^2-1 \in \I{m}'$. In this case level raising asserts that there is a Hecke eigenform in $S_2(\Gamma_0(Nq))$ that is new at $q$ and whose associated Galois representation mod $p$ is equal to $\bar\rho$, i.e., that there is a congruence. The question we want to ask is the following:
\begin{question}\label{lift} Is $\dim S_2(\Gamma_0(Nq))^\qnew_{\I{m}'} = 1$ for a positive density  of primes $q \in X$?\footnote{For $q\equiv-1\pmod p$ the expected dimension has to be $2$ and not $1$ by the discussion below.}
\end{question}
Similarly, we could ask:
\begin{question} 
What is the probability of $\dim S_2(\Gamma_0(Nq))^\qnew_{\I{m}'}=k$ for fixed $k\ge1$, as $q \in X$ varies?
\end{question}

We have analyzed the first  question for some small fixed $N$ and $p$. In fact, the first case considered was $N=11$ and $p=11$, building off numerical investigations carried out by Tommaso Centeleghe. In this case $S_2(\Gamma_0(11),\F_{11}) = S_{12}(\SL_2(\Z),\F_{11}) = \F_{11} \cdot \Delta$ as Hecke modules. Suppose $\bar{\rho} = \bar{\rho}_{\Delta,11}$, and consider primes $q$ for which $\tau(q) = \pm (q+1) \Mod 11$. Question \ref{lift} in this setting asks whether there is a unique $q$-new form in $S_2(\Gamma_0(11q))$ which gives rise to $\bar{\rho}$.\medskip

Fix a prime $p$ and a square free integer $N>1$, and let $f\in S_2(N,p)$ be a Hecke eigenform whose generalized Hecke eigenspace has dimension $1$ and such that $\bar\rho:=\bar\rho_f$ is not finite any prime dividing $N$. To interpret our findings, we first analyze level raising at a prime $q$ from a deformation theory perspective. Thus as before we consider primes $q$ for which Ribet's level raising condition $a_q(f) = \pm (q+1) \Mod p$ is satisfied, but ignore those primes for which $q \equiv 1 \Mod p$ and $\bar{\rho}(\Frob_q)$ is a scalar matrix. Let $k$ be the coefficient field of $f$, so that in the following we may and will regard $\bar\rho$ as a representation to $\GL_2(k)$.\medskip

There are up to four universal deformation rings $R^?$ relevant here: All rings parametrize deformations of $\bar{\rho}$ which are unramified outside $N \!pq$, and which at all primes $l$ dividing $N$ are of the form 
\begin{equation}\label{eq:shape-of-def}
\MatTwo{\chi_{p}}{*}{0}{1},
\end{equation}
where $\chi_{p}$ denotes the $p$-adic cyclotomic character, i.e., unramified Steinberg at $l$ if $l\neq p$ and ordinary weight $2$ if $l=p$, and which at $p$ are crystalline of weight $2$ if $p$ does not divide~$N$. The ring $R^\qunr$ parameterizes those deformations which are unramified at $q$, the ring $R^\qnew$ those which are of the form (\ref{eq:shape-of-def}) at $q$, i.e., which are Steinberg at $q$, and the ring $R$ those which are unrestricted at $q$. If $q\equiv -1\pmod p$, we also consider the ring $R^\qnewtw$ which at $q$ parameterizes deformations of the shape in (\ref{eq:shape-of-def}) twisted by $\chi_p$, i.e. unramified quadratic twists of Steinberg at $q$. (The reason for ignoring the primes $1 \Mod p$ for which $\bar{\rho}(\Frob_q)$ is scalar is that the local problem at $q$ may not be representable in this case.) We refer to \cite{Ramakrishna-FM} and to \cite{Boston-Ramif} for a detailed deformation theoretic discussion of the local deformation rings introduced above at primes dividing $Np$ and dividing $q$, respectively; in particular \cite{Ramakrishna-FM} provides the local computations of the dimensions displayed below.\medskip

For each of the above deformation problems, the tangent space is given by a Selmer group. At primes $l$ dividing $Np$, in all cases, the local problem is given by the same subspace $\Sc{L}_l \subset H^1(G_{\Q_l},\Ad^0(\bar{\rho}))$. At $q$ we have subspaces $\Sc{L}_q^\qunr$, $\Sc{L}_q^\qnew$ and $\Sc{L}_q^\qnewtw$ of $\Sc{L}_q=H^1(G_{\Q_q},\Ad^0(\bar{\rho}))$, and in each case we have a tuple $\Sc{L}^?=(\Sc{L}_l)_{l|Np}\cup(\Sc{L}_q^?)$ such that the tangent space of $R^?$ is the Selmer group $H^1_{\Sc{L}^?}(G_{\Q,S},\Ad^0(\bar{\rho}))$, where $S = S(Nqp)$. Note that $\Sc{L}_q^\qunr=H^1(G_{\Q_q}/I_q,\Ad^0(\bar{\rho}))$ where $I_q$ is the ramification subgroup of $G_{\Q_q}$. Assuming that $\rho(\Frob_q)$ is non-scalar if $q\equiv 1\pmod {p}$, we have the following dimensions of the local problems:
\begin{eqnarray*}
q \equiv -1 \Mod p&:& \dim H^1(G_{\Q_q},\Ad^0(\bar{\rho}))=3, \dim \Sc{L}_q^\qunr\!=\dim \Sc{L}_q^\qnew\!=\dim \Sc{L}_q^\qnewtw \!= 1,\\
q \not \equiv- 1 \Mod p&:&  \dim H^1(G_{\Q_q},\Ad^0(\bar{\rho})) = 2, \dim \Sc{L}_q^\qunr\!=\dim \Sc{L}_q^\qnew\!= 1.\end{eqnarray*}

In each of the above cases, an isomorphism $R^?\cong \T^?$ is known for a corresponding Hecke algebra $\T^?$: For any $N$, let $\T_2(\Gamma_0(N))$ denote the full Hecke algebra acting on $S_2(\Gamma_0(N))$. The algebras $\T^\qunr$ and $\T$ are the completion of $\T_2(\Gamma_0(N))$ and  $\T_2(\Gamma_0(Nq))$, respectively, at the maximal ideal generated by $M:=\{p\}\cup\{T_l-a_l(f)\mid l\,{\!\not|} \,Nq\}$, i.e., the ideal corresponding to $\bar\rho$. If $q\not\equiv-1\pmod p$, then $\T^{\qnew}$ is the completion of  $\T_2(\Gamma_0(Nq))$ at the maximal ideal generated $\ffrm'$  by $M\cup\{U_q^2-1\}$, and if $q\equiv-1\pmod p$, then $\T^{\qnew}$ and $\T^{\qnewtw}$ are the completion of  $T_2(\Gamma_0(Nq))$ at the maximal ideal generated by $M$ together with either $U_q-1$ or $U_q+1$, respectively. By our hypothesis on $f$, the algebra $\T^\qunr$ is isomorphic to the ring of Witt vectors $W(k)$. By level raising theorems, the algebra $\T^{\qnew}$ (and $\T^{\qnewtw}$ for $q\equiv-1\pmod p$) has positive rank over $W(k)$.\medskip

The above means that the mod $p$ tangent space of $R^{\qunr}$ is trivial, i.e., that we have $H^1_{\Sc{L}^\qunr}(G_{\Q,S},\Ad^0(\bar{\rho}))=0$. The mod $p$ tangent spaces of $R^\qnew$ (or $R^\qnewtw$) are trivial if and only if the dimension of the space of (twisted) $q$-new forms is $1$. The Greenberg-Wiles formula then gives that
$$\dim H^1_{\Sc{L}}(G_{\Q,S},\Ad^0(\bar{\rho})) = \dim H^0(G_{\Q_q},\Ad^0(\bar{\rho})(1))=:\delta_q;$$ 
here $\delta_q=1$ if $q\not\equiv-1\pmod p$ and $\delta_q=2$ in the remaining case. Note that the strict positivity of $\delta_q$ is also implied by level raising which implies that the mod $p$ tangent space of $R$ is non-trivial. Let $k_0\subset k$ be the field of definition of $\Ad^0(\bar\rho)$. 
\begin{heuristic}\label{Heuristic-RibetsQ}
The restriction map
\[ r_q\colon H^1_{\Sc{L}}(G_{\Q,S},\Ad^0(\bar{\rho})) \longrightarrow  H^1(G_{\Q_q},\Ad^0(\bar{\rho})) \]
is random in $q$ as a map of $k_0$ vector spaces $k_0^{\delta_q}\to k_0^{\delta_q+1}$.
\end{heuristic}
It can be verified that the $k_0$ vector space on the right is the direct sum of the $1$-dimensional subspaces $\Sc{L}_q^\qunr$ and $\Sc{L}_q^\qnew$ (and also $\Sc{L}_q^\qnewtw$ if $q\equiv-1\pmod p$). We choose a corresponding basis of $H^1(G_{\Q_q},\Ad^0(\bar{\rho}))$. By the above we also know that $r_q^{-1}(\Sc{L}^\qunr)$ is zero. Hence with respect to a suitable basis also of $H^1_{\Sc{L}}(G_{\Q,S},\Ad^0(\bar{\rho}))$ we can write $r_q$ as a matrix 
\[
\left(\begin{array}{c}
a_q\\
1
\end{array}\right)
\quad\hbox{$\Bigg($ or}
\left(\begin{array}{cc}
a_q&b_q\\
1&0\\
0&1
\end{array}\right) \hbox{, respectively$\Bigg)$},
\]
for suitable $a_q$ (and $b_q$) in $k_0$. Now the dimension of $H^1_{\Sc{L}^\qnew}(G_{\Q,S},\Ad^0(\bar{\rho})) $ is that of the inverse image of $\Sc{L}^\qnew$ under $r_q$ (and similarly for $\Sc{L}^\qnewtw$). Hence its dimension is $0$ if $a_q\neq0$ and $1$ if $a_q=0$ (and similarly for $\Sc{L}^\qnew$ and $b_q$). The dimension is $0$ means that the $q$-new space of forms $f$ with $\bar\rho_f=\bar\rho$ has dimension one, or more precisely that $\T^\qnew[1/p]\cong W(k)[\frac1p]$ (or that $\T^\qnewtw[1/p]\cong W(k)[\frac1p]$).\medskip

Now the hypothesis that $q\to r_q$ behaves radomly simply means that the map $q\to a_q$ (and $q\to b_q$) takes random values in~$k_0$. Thus our heuristic predicts that the probability that the multiplicity of the $q$-new space(s) in question is larger than $1$ is $\frac1{\#k_0}$, and the probability is $1-\frac1{\#k_0}$ that the multiplicity is exactly~$1$. To be more explicit: 
\begin{proposition}\label{prop:RibetLR-1} Assume Heuristic~\ref{Heuristic-RibetsQ} holds, and suppose that $q\not\equiv-1\pmod p$ and that $\bar\rho(\Frob_q)$ is non-scalar.
Then $\dim S_2(\Gamma_0(Nq))^\qnew_{\I{m}'} = 1$ with density $1-\frac1{\#k_0}$. 
\end{proposition}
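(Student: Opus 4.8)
The plan is to translate the statement directly into a probability computation using Heuristic~\ref{Heuristic-RibetsQ} and the deformation-theoretic dictionary assembled in the paragraphs above. First I would recall that $\dim S_2(\Gamma_0(Nq))^\qnew_{\I{m}'} = 1$ is equivalent, via the $R^\qnew \cong \T^\qnew$ identification together with level raising, to the vanishing of the mod $p$ tangent space $H^1_{\Sc{L}^\qnew}(G_{\Q,S},\Ad^0(\bar{\rho}))$; indeed $\T^\qnew$ has positive $W(k)$-rank by level raising, and its mod $p$ tangent space is trivial precisely when this rank is one, i.e.\ $\T^\qnew[1/p] \cong W(k)[\frac1p]$, which is the meaning of the asserted dimension being $1$. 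So the goal reduces to computing the density of primes $q \in X$ with $q \not\equiv -1 \pmod p$, $\bar\rho(\Frob_q)$ non-scalar, for which this Selmer group vanishes.

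Next I would invoke the explicit description of the restriction map worked out just before the statement: under the hypothesis $q \not\equiv -1 \pmod p$, the Greenberg–Wiles formula gives $\dim H^1_{\Sc{L}}(G_{\Q,S},\Ad^0(\bar{\rho})) = \delta_q = 1$, while $\dim H^1(G_{\Q_q},\Ad^0(\bar{\rho})) = 2$, decomposing as the direct sum of the lines $\Sc{L}_q^\qunr$ and $\Sc{L}_q^\qnew$. Since $\T^\qunr \cong W(k)$ forces $H^1_{\Sc{L}^\qunr}(G_{\Q,S},\Ad^0(\bar{\rho})) = 0$, the map $r_q\colon H^1_{\Sc{L}}(G_{\Q,S},\Ad^0(\bar{\rho})) \to H^1(G_{\Q_q},\Ad^0(\bar{\rho}))$ is injective with image meeting $\Sc{L}_q^\qunr$ trivially, so in a suitable basis it is the column vector $\left(\begin{smallmatrix} a_q \\ 1 \end{smallmatrix}\right)$ with $a_q \in k_0$. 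One then checks that $H^1_{\Sc{L}^\qnew}(G_{\Q,S},\Ad^0(\bar{\rho})) = r_q^{-1}(\Sc{L}_q^\qnew)$ is zero exactly when $a_q \neq 0$ and one-dimensional when $a_q = 0$.

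Finally I would apply Heuristic~\ref{Heuristic-RibetsQ}: the assertion that $q \mapsto r_q$ is random as a map of $k_0$-vector spaces means precisely that $q \mapsto a_q$ takes equidistributed values in $k_0$, so the probability that $a_q = 0$ is $\frac{1}{\#k_0}$ and the probability that $a_q \neq 0$, hence that $\dim S_2(\Gamma_0(Nq))^\qnew_{\I{m}'} = 1$, is $1 - \frac{1}{\#k_0}$. Summing over $q \in X$ with the stated congruence and non-scalar conditions removed — both cut out by Chebotarev-type conditions and so not affecting densities within $X$ — yields the claimed density $1 - \frac{1}{\#k_0}$.

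The main obstacle here is not any step of the argument but the status of Heuristic~\ref{Heuristic-RibetsQ} itself: the proposition is explicitly conditional on it, and the genuinely hard mathematical content — showing that $a_q$ really does equidistribute, which would in particular prove the positive-density statement of Question~\ref{lift} unconditionally — lies outside what can be established by these methods. Within the conditional framework, the only point requiring care is the precise bookkeeping that $r_q^{-1}(\Sc{L}_q^\qnew)$ computes the $\qnew$ tangent space (rather than, say, a twist of it) and that the field of definition $k_0$ of $\Ad^0(\bar\rho)$, not $k$, is the correct coefficient field for counting; both follow from the local deformation computations of \cite{Ramakrishna-FM} recalled above.
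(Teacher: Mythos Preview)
Your proposal is correct and follows essentially the same path as the paper: the proposition is stated as an explicit summary of the preceding discussion, and the argument is exactly the translation you give --- the $R^\qnew\cong\T^\qnew$ dictionary reduces $\dim S_2(\Gamma_0(Nq))^\qnew_{\I{m}'}=1$ to the vanishing of the $\qnew$-Selmer group, the matrix shape $\left(\begin{smallmatrix} a_q\\1\end{smallmatrix}\right)$ for $r_q$ shows this vanishing is equivalent to $a_q\neq0$, and Heuristic~\ref{Heuristic-RibetsQ} gives $a_q\neq0$ with probability $1-\frac1{\#k_0}$. There is nothing to add; your final paragraph correctly identifies that the only substantive content is the conditional heuristic itself.
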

If $q\equiv-1\pmod p$, then level raising shows that there are $q$-new forms $f$ with $U_q=1$ and with $U_q=-1$ for which $\bar\rho_f=\bar\rho$. We decorate the corresponding spaces $S_2(\Gamma_0(Nq))^\qnew_{\I{m}'}$ with $U_q=1$ or $U_q=-1$ the exponent.
\begin{proposition}\label{prop:RibetLR-2}  Assume Heuristic~\ref{Heuristic-RibetsQ} holds, and suppose that $q\equiv-1\pmod p$.
Then for both signs $\eps\in\{\pm1\}$ we have $\dim S_2(\Gamma_0(Nq))^{\qnew,U_q=\eps}_{\I{m}'} = 1$ with density $1-\frac1{\#k_0}$.
\end{proposition}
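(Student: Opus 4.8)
The plan is to run the argument behind Proposition~\ref{prop:RibetLR-1} one dimension higher, exploiting that for $q\equiv-1\pmod p$ there are two independent ``new at $q$'' local conditions rather than one. First I would pin down the local picture at $q$. Since $q\in X$, the eigenvalues of $\bar\rho(\Frob_q)$ have ratio $q\equiv-1\pmod p$ and are therefore distinct, so $\bar\rho(\Frob_q)$ is automatically non-scalar; in particular no exceptional prime intervenes and all of $R$, $R^{\qunr}$, $R^{\qnew}$, $R^{\qnewtw}$ together with the Hecke algebras $\T^{\qunr}$, $\T^{\qnew}$, $\T^{\qnewtw}$ are defined. By the local dimension computations recalled above, $H^1(G_{\Q_q},\Ad^0(\bar\rho))$ is $3$-dimensional over $k_0$ and equals the internal direct sum $\Sc{L}_q^{\qunr}\oplus\Sc{L}_q^{\qnew}\oplus\Sc{L}_q^{\qnewtw}$ of the three $1$-dimensional local subspaces. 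I would fix a basis $e_1,e_2,e_3$ of $H^1(G_{\Q_q},\Ad^0(\bar\rho))$ adapted to this decomposition, with $e_2$ spanning the line attached to the $U_q=1$ deformation problem and $e_3$ the line attached to $U_q=-1$.

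Next I would feed in the global inputs already in the text. The isomorphism $\T^{\qunr}\cong W(k)$ forces $H^1_{\Sc{L}^{\qunr}}(G_{\Q,S},\Ad^0(\bar\rho))=0$, i.e.\ $r_q^{-1}(\Sc{L}_q^{\qunr})=0$, while the Greenberg--Wiles formula gives $\dim_{k_0}H^1_{\Sc{L}}(G_{\Q,S},\Ad^0(\bar\rho))=\delta_q=2$. Since $r_q$ is then injective on $H^1_{\Sc{L}}$ and its $2$-dimensional image meets $\Sc{L}_q^{\qunr}=k_0e_1$ trivially, projection along $e_1$ identifies the image with $k_0e_2\oplus k_0e_3$, so after choosing a suitable basis $v_1,v_2$ of the source, $r_q$ is represented by $\left(\begin{array}{cc}a_q&b_q\\1&0\\0&1\end{array}\right)$ for uniquely determined $a_q,b_q\in k_0$ (the first row recording the $e_1$-component). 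A one-line computation then gives $r_q^{-1}(\Sc{L}_q^{\qnew})=0$ if and only if $a_q\neq0$, and $r_q^{-1}(\Sc{L}_q^{\qnewtw})=0$ if and only if $b_q\neq0$. Using $R^{\qnew}\cong\T^{\qnew}$ and $R^{\qnewtw}\cong\T^{\qnewtw}$, together with the fact that level raising supplies a characteristic-zero point on each of these rings, the vanishing of the respective mod $p$ tangent space is equivalent to the ring being $W(k)$, hence to $\dim S_2(\Gamma_0(Nq))^{\qnew,U_q=1}_{\I{m}'}=1$, respectively $\dim S_2(\Gamma_0(Nq))^{\qnew,U_q=-1}_{\I{m}'}=1$. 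Thus for each sign $\eps$ the asserted equality is equivalent to the non-vanishing of one of $a_q$, $b_q$.

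Finally I would translate Heuristic~\ref{Heuristic-RibetsQ}: requiring $r_q$ to be a ``random'' $k_0$-linear map $k_0^2\to k_0^3$ of the shape above is exactly requiring $(a_q,b_q)$ to be uniformly distributed over $k_0^2$, so $\Pr[a_q\neq0]=\Pr[b_q\neq0]=1-\frac1{\#k_0}$; letting $q$ range over the relevant primes (the $q\in X$ with $q\equiv-1\pmod p$, which form a set of positive density by \v{C}ebotarev) this yields the claimed density for each $\eps$, in complete parallel with Proposition~\ref{prop:RibetLR-1}. The one step that requires a little care, and where I expect any genuine subtlety to sit, is this last translation: one must check that the natural parametrization by the pair $(a_q,b_q)$ of those maps $r_q$ with $r_q^{-1}(\Sc{L}_q^{\qunr})=0$ is compatible with the ``random map'' normalization of Heuristic~\ref{Heuristic-RibetsQ}, so that conditioning on the unramified constraint does not skew the induced distribution on $(a_q,b_q)$. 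Everything else is a cited $R=\T$ input, the Greenberg--Wiles formula, the quoted local dimension computations, or elementary linear algebra over $k_0$.
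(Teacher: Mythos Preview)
Your argument is correct and follows the same route as the paper: in the $q\equiv-1\pmod p$ case the local target is the $3$-dimensional direct sum $\Sc{L}_q^{\qunr}\oplus\Sc{L}_q^{\qnew}\oplus\Sc{L}_q^{\qnewtw}$, the global Selmer group is $2$-dimensional with $r_q^{-1}(\Sc{L}_q^{\qunr})=0$, and after normalizing bases $r_q$ has the displayed $3\times 2$ shape with free parameters $a_q,b_q\in k_0$ whose non-vanishing is equivalent to the respective $q$-new Selmer groups being trivial; the heuristic then gives density $1-\frac1{\#k_0}$ for each sign. Your explicit justification of the matrix shape (injectivity of $r_q$, trivial intersection of the image with $\Sc{L}_q^{\qunr}$) and your closing remark about conditioning on the unramified constraint make precise a step the paper leaves implicit, but the substance is identical.
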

One can also combine this, to get the prediction $\dim S_2(\Gamma_0(Nq))^\qnew_{\I{m}'} = 2$ with density $(1-\frac1{\#k_0})^2$ for $q\equiv-1\pmod p$.
\medskip

\subsection{Computational evidence}
We compared the predictions in Propositions~\ref{prop:RibetLR-1} and~\ref{prop:RibetLR-2} with numerically computed data for $p\in\{3,5,7,11\}$ using the computer algebra system Magma. Let $N=11$ if $p\in\{3,7,11\}$ and $N=17$ if $p=5$. Consider the unique ($\Q$-rational) normalized Hecke eigenform $f\in S_2(\Gamma_0(N))$. Here $N$ is chosen for $p$ so that $\bar\rho_f\colon G_\Q\to\GL_2(\F_p)$ is surjective. There is a corresponding elliptic curve $E_f$ over $\Q$ of~level~$N$. 
 \medskip

When $p$ is fixed, we distinguish four cases of primes $q$ not dividing $Np$. In each case we require $a_q(f)\equiv\pm(q+1)\pmod p$. We use indices i for $q\not\equiv\pm1\pmod p$, ii for $q\equiv-1\pmod p$, iii for $q\equiv1\pmod p$ and $\bar\rho_f(\Frob_q)$ non-scalar, and iv for $q\equiv1\pmod p$ and $\bar\rho_f(\Frob_q)$ scalar. For $?\in\{\textrm{i,ii,iii,iv}\}$ we denote by $S_?$ the set of all primes $q$ satisfying $?$. The sets $S_?$ are \v{C}ebotarov sets and analyzing conjugacy classes in $\GL_2(\F_p)$ gives
\[ \delta(S_{\textrm{i}}) = \frac{2(p-3)}{(p-1)^2},\ \delta(S_{\textrm{ii}}) = \frac{1}{(p-1)^2},\   \delta(S_{\textrm{iii}}) = \frac{2}{p^2+p},\  \delta(S_{\textrm{iv}}) = \frac{2}{(p^2-1)(p^2-p)}\]
for their densities.\medskip

This indicates a main problem with gathering numerical data: computing Hecke operators on $S_2(\Gamma_0(M))$ for $M\approx 10^6$ is already extremely slow, while the size of sets $S_?$ intersected with the interval $\{1..\lfloor 10^5/p\rfloor\}$ may be rather small for $?\in\{\textrm{ii,iii,iv}\}$. For instance if $p=11$ and we are in case iii, then $\delta(S_{\textrm{iii}})=\frac1{66}$, and one computes the cardinality of $S_{\textrm{iii}}\cap\{1,\ldots,10000\}$ to be $23$, and so one expects there to be only $2$-$3$ such $q$ for which $\dim S_2(\Gamma_0(Nq))^{\qnew}_{\I{m}'}>1$. Since the numerical tests suggest that cases with multiplicity higher than expected occur rather irregularly, having only $23$ samples is probably rather unreliable.\medskip

Another, perhaps less problematic, issue with the tables below is that they are not guaranteed to be correct. When computing the dimension of $S_2(\Gamma_0(Nq))^{\qnew}_{\I{m}'}$ we only intersected Hecke eigenspaces for $T_l-a_l(f)\pmod p$ for primes $l\neq N,p$ up to a certain $l$ that is much smaller than the Sturm bounded needed to have proven correctness of multiplicities -- if the dimension is larger than $1$. Our data is collected in Table~\ref{TableTwo}. 
\medskip

\renewcommand{\arraystretch}{1.3}
\begin{table}[htb]
\begin{center}
\advance\tabcolsep by .5em
\begin{tabular}{l|l|c|c|c|c|c|c|c|c|c|c|}
  &Case     & p=3&    p=5& p=7&  p=11\\ \hline
&\textrm{i} &   \textrm{---} & $\frac{99}{399}\approx.25$& $\frac{70}{503}\approx.14$
&$\frac{31}{321}\approx.097$\\ \cline{2-6}
Data&\textrm{ii}+ &    $\frac{145}{422}\approx.34$&$\frac{32}{124}\approx.26$&$\frac{11}{85}\approx.13$ 
&$\frac{3}{28}\approx.11$\\ \cline{2-6}
&\textrm{ii}$-$ &    $\frac{157}{422}\approx.37$&$\frac{26}{124}\approx.21$&$\frac{10}{85}\approx.12$ 
&$\frac{2}{28}\approx.071$\\ \cline{2-6}
&\textrm{iii} &   $ \frac{207}{630}\approx.33$
 &$\frac{41}{205}\approx.20$&$\frac{19}{150}\approx.13$
&$\frac{3}{78}\approx.038$\\ \hline 
\multicolumn{2}{l|}{Expected}  & $\frac{1}{3}\approx.33$ & $\frac{1}{5}=.2$  &  $\frac{1}{7}\approx.14$ & $\frac{1}{11}\approx.09$ \\ \hline
\end{tabular}
\medskip
\end{center}
\caption{}\label{TableTwo}
\end{table}

When displaying the output, we subdivide case ii in case ii+ and case ii$-$. This refers to the sign of the $U_q$-operator. For $p=3$ case i is empty. The data in case ? was produced by testing for fixed $p$ all $q$ up to a certain size, depending on $?$ and $p$. The denominator given in the rows labelled {\em Data} is the total number of $q$ in $S_?$ up to that bound. The numerator is the number of those $q$ where the multiplicity of the eigenspace in question is larger than expected.

\smallskip

In case iv we have no heuristic model. Here the data suggests $\dim S_2(\Gamma_0(Nq))^{\qnew}_{\I{m}'}\ge 4$, always. This was witnessed by $56$ primes $q$ for $p=3$, by $7$ primes $q$ for $p=5$, and by $2$ primes $q$ for $p=7$; in each case, the lower bound $4$ was attained most frequently. Case iv is rare in general, as is immediate from $\delta(S_{\textrm{iv}})$ displayed above. We found no $q$ for $p=11$ in case iv in our range of computation; the smallest candidate $q$ is $109297$. Let us mention that we were able to distinguish case iv from case iii because of \cite{Centeleghe} by Centeleghe and because our eigenforms $f$ correspond to elliptic curves defined over $\Q$. The computer code used is from~\cite{Centeleghe-Tsaknias}.

\begin{remark}
Let us also mention that R.\ Ramakrishna suggested the following refined version of Propositions \ref{prop:RibetLR-1} and \ref{prop:RibetLR-2}: Suppose that $q\not\equiv-1\pmod p$ and that $\bar\rho(\Frob_q)$ is non-scalar. Then for $i\ge1$ one has
\[\dim S_2(\Gamma_0(Nq))^\qnew_{\I{m}'} = i\quad \hbox{occurs with density \ } (\#k_0)^{1-i}(1-\frac1{\#k_0}).\]
The same statistics should also hold for the event $ \dim S_2(\Gamma_0(Nq))^{\qnew,U_q=\eps}_{\I{m}'} = i$ if $q\equiv-1\pmod p$ and $\eps$ is a sign in $\{\pm1\}$. 
In Table~\ref{TableThree} we compare our data with this prediction in the case $p=3$.
\begin{table}[htb]
\begin{center}
\begin{tabular}{l|*{13}{r|}}
multiplicity &all& 1 & 2 & 3& 4& 5& 10 \\ \hline
expected &&.66&.22&.074&.025&.008&.00003\\ \hline
$p=3$, case iii &504& 339&110&39&13&1&1\\ \hline
 ratio &&.67&.22&.077&.026&.0019&.0019\\ \hline
$p=3$, cases ii+ &319& 211 & 78&18&10&1&0\\ \hline
ratio &&.66&.24&.056&.031&.003&0\\ \hline
$p=3$, cases ii$-$ &319& 200 & 84&26&8&1&0\\ \hline
ratio &&.63&.26&.081&.025&.003&0\\ \hline
\end{tabular}
\end{center}\caption{}\label{TableThree}
\end{table}
\end{remark}
\medskip

\subsection{Level raising for $S_3$-representations}

We excluded above the case of dihedral representations as they exhibit different behavior. We illustrate this by discussing the case of odd $S_3$-representations $\rhobar: G_\Q \ra \GL_2(k)(=Aut(\overline W))$  with $k$ a fixed finite field of characteristic $p$ from the point of view of of level raising  at primes $q \neq p$. This is in parallel with the work in the pevious section which addressed weight one $p$-adic lifts of such $\rhobar$. 

We use the earlier notation and thus the determinant of $\rhobar$, the sign of the $S_3$-representation, is an odd character $\chi$.  Such a $\rhobar$ arises from  $S_2(\Gamma_1(Np),\chi \chi_p^{-1})$  for some $N$ prime to $p$, and we assume that $p>>0$ so there is a unique form in this space which fives rise to $\rhobar$ and thus the corresponding global Selmer group is 0. There are two cases of level raising at primes $q $ not 1 mod $p$, and unramified in $\rhobar$ and $\neq p$, to consider:

Case 1: $p$ is 1 mod $3$, $q$ splits in the splitting field of $\chi$, $\rhobar(\Frob_q)$ has eigenvalues $\omega, \omega^2$ with $\omega$ cube roots of unity in $k^*$ (which we may take to be the prime field $\F_p$), and $q$ is either $\omega$ or $\omega^{-1}$ mod $p$. In this case $\Ad^0=k(\chi)+ \overline W$, and locally at $q$, $\chi$ is trivial and $\overline W=k(\epsilon)+k(\epsilon^{-1})$. It is easy to see that  the  the $q$-new condition is the image of $H^1(G_q,k(\epsilon))$ in $H^1(G_q,\Ad^0)$, and  from this deduce that  the $q$-new Selmer group is always one-dimensional.

Case 2: We assume that $q$ is $ -1$ mod $p$, $p$ is 1 mod 3,  and $\rhobar(\Frob_q)$ is the image of complex conjugation. Locally at $q$, $k(\psi)=k(\varepsilon)$, and $\overline W=k+k(\varepsilon)$ where $\varepsilon $ is the mod $p$ cyclotomic charcater  of $G_q$ of order 2.  These eigenspaces correspond to the lines spanned by $H, E+F, E-F$ is the standard basis $H,E,F$ of $sl_2$. We have to choose which  twist of Steinberg we consider, and locally at  $q$, the $q$-new condition is either the image of
$H^1(G_q,k(H+E-F)(\varepsilon))$ or $H^1(G_q,k(H-E+F)(\varepsilon))$ in $H^1(G_q, \Ad^0)$. The   global $q$-new Selmer group  with given choice of Steinberg twist is 1-dimensional  if the  1-dimensional global Selmer group (with no restrictions at $q$) $H^1_{\mathcal L}(S \cup \{q\},\overline W)$, which is a summand of the $2$ dimensional  $H^1_{\mathcal L}(S \cup \{q\},\Ad^0)$, maps to the line $H^1(G_q,k(\epsilon))$ in the 2-dimensional local cohomology group $H^1(G_q,\overline W)$. Following our heuristics this may be expected to happen $1/p$ times (note that as $p$ is 1 mod 3, the field $k$ may be taken to be the prime field $\F_p$).

\end{document}